\numberwithin{equation}{section}
\newcommand\R{\mathbb{R}}
\newcommand\Rinf{\overline{\mathbb{R}}}
\newcommand\inter[1]{ {\rm \textbf{int}}(#1)} %interior
\newcommand\dom[1]{ \bs{{\rm dom}}(#1)} %domain
\newcommand\Dom[1]{ \bs{{\rm Dom}}(#1)} %domain set-valued
\newcommand\dist{ \bs{{\rm dist}}} %domain set-valued
\newcommand\gf{\varphi} %general function in results
\newcommand\gh{\psi} %general function in preliminaries
\newcommand\fgam[3]{#1_{#3}^{#2}}
\newcommand\prox[3]{ \bs{{\rm prox}}_{#2#1}^{#3}}
\newcommand\ov[1]{\overline{#1}}
\newcommand\mb{\mathbf{B}}
\newcommand\bs[1]{\boldsymbol{#1}}
\newcommand\argmin[1]{\bs{\arg\min}_{#1}}
\newcommand\argmint[1]{\mathop{\bs{\arg\min}}\limits_{#1}}
\newcommand\Nz{\mathbb{N}_0}
\journalname{Set-Valued and Variational Analysis}
\begin{document}

\title{
Moreau envelope and proximal-point methods under the lens of high-order regularization 
}

%\titlerunning{Short form of title}        % if too long for running head

\author{Alireza Kabgani         \and
        Masoud Ahookhosh %etc.
}

%\authorrunning{Short form of author list} % if too long for running head

\institute{Corresponding author: Alireza Kabgani \\
A. Kabgani, M. Ahookhosh \at
              Department of Mathematics, University of Antwerp, Antwerp, Belgium. \\
              \email{alireza.kabgani@uantwerp.be, masoud.ahookhosh@uantwerp.be}             \\
              The Research Foundation Flanders (FWO) research project G081222N and UA BOF DocPRO4 projects with ID 46929 and 48996 partially supported the paper's authors.
%             \emph{Present address:} of F. Author  %  if needed
}

\date{Received: date / Accepted: date}
% The correct dates will be entered by the editor

\maketitle

\begin{abstract}
This paper is devoted to investigating the fundamental properties of the high-order proximal operator (HOPE) and the high-order Moreau envelope (HOME) in the nonconvex setting, where the quadratic regularization ($p=2$) is
replaced by a $p$-order regularizer with $p > 1$.
After establishing several basic properties of HOPE and HOME, we study the differentiability and weak smoothness of HOME under $q$-prox-regularity with $q \geq 2$ and $p$-calmness for $p \in (1,2]$ and $2 \leq p \leq q$. Furthermore, we propose a high-order proximal-point algorithm (HiPPA) and analyze the convergence of the generated sequence to proximal fixed points. Our results pave the way for the development of a high-order smoothing theory with $p>1$ that can lead to new algorithmic advances in the nonconvex setting. To illustrate this potential for nonsmooth and nonconvex optimization, we apply HiPPA to the Nesterov–Chebyshev–Rosenbrock functions.

 \keywords{Nonsmooth and nonconvex optimization \and High-order Moreau envelope \and High-order proximal operator \and Proximal-point method\and Prox-regularity \and Calmness.}
% \PACS{PACS code1 \and PACS code2 \and more}
 \subclass{49J52\and 65K10 \and 90C26 \and 90C56}
\end{abstract}

\vspace{-4mm}
%%%%%%%%%%%%%%%%%%%%%%%%%%%%%%%%%%%%%%%%%%%%%%%%%%%%%%%%%%%%%%%%%%%%%%%%%%%
%%%%%%%%%%%%%%%%%%%%%%%%%%%%%%%%%%%%%%%%%%%%%%%%%%%%%%%%%%%%%%%%%%%%%%%%%%%
\section{Introduction}\label{intro}
In the context of nonsmooth and nonconvex optimization, the {\it smoothing paradigm} leads to strong theoretical and computational tools that can be efficiently used to tailor fast iterative schemes that commonly outperform classical {\it subgradient-based} methods.
Due to the superior numerical efficiency of {\it gradient-based} schemes over subgradient methods, {\it smoothing techniques} have attracted considerable attention in recent decades; see, e.g. \cite{Ahookhosh21,Beck12,ben2006smoothing,bertsekas2009nondifferentiable,Bot15,Bot2020,Moreau65,nesterov2005smooth,patrinos2013proximal,Stella17,Themelis18, themelis2019acceleration,themelis2020douglas}.
Among these, approaches based on {\it convolution techniques}, such as {\it infimal-convolution} \cite{Bauschke17,Beck12,Burke2017} and {\it integral-convolution} \cite{Burke2013Gradient,Chen2012}, have been particularly influential.

Notably, the {\it Moreau envelope} (also referred to as the {\it Moreau-Yosida regularization}) is arguably the most widely studied technique, as it provides a smooth approximation to a given function while preserving the same set of minimizers. Originally introduced by Jean-Jacques Moreau \cite{Moreau65}, this concept has become a cornerstone of nonsmooth optimization theory and algorithms due to its favorable properties in both convex (e.g., \cite{Beck12, Bot15, Drusvyatskiy19, Ghaderi24, Parikh14, Shefi16}) and nonconvex (e.g., \cite{Burke13,Kabganitechadaptive,Kabgani24itsopt,KecisThibault15,Poliquin96,Rockafellar09}) settings.

For a proper and lower semicontinuous (lsc) function $\gf: \R^n\to\Rinf:=\R\cup \{+\infty\}$ and a parameter $\gamma>0$, the {\it Moreau envelope} \cite{Moreau65} is defined as
\begin{equation}\label{eq:mor}
    \fgam{\gf}{}{\gamma}(x):=\mathop{\bs{\inf}}\limits_{y\in \R^n} \left(\gf(y)+\frac{1}{2\gamma}\Vert x- y\Vert^2\right).
\end{equation}
This construction yields a smooth approximation of the original cost function $\varphi$ under some suitable assumptions, such as {\it prox-regularity} and {\it calmness}; see, e.g., \cite{Poliquin96,Rockafellar09}. It thereby provides a more tractable optimization framework.
In particular, if $\gf$ is convex, then $\gf_\gamma$ not only preserves convexity but is also $\gamma^{-1}$-smooth; that is, it is Fr\'{e}chet differentiable with a Lipschitz continuous gradient of constant $\gamma^{-1}$. 
In addition, the set of minimizers for the original cost $\gf$ and its envelope $\gf_\gamma$ coincides, i.e.,
$ \argmin{x\in \R^n} \gf(x)=\argmin{x\in \R^n} \gf_\gamma (x)$, as established in \cite[Chapter~12]{Bauschke17} (see, e.g., Propositions 12.15, 12.29, and 12.30 therein).
 
A closely related concept is the {\it proximal-point operator} \cite{Moreau65}, which plays a fundamental role in the definition of the Moreau envelope and in the development of proximal-based iterative schemes for minimizing the cost function $\gf$. It is defined as
\begin{equation}\label{eq:prox}
\prox{\gf}{\gamma}{} (x):=\argmint{y\in \R^n} \left(\gf(y)+\frac{1}{2\gamma}\Vert x- y\Vert^2\right).
\end{equation}
The proximal operator enjoys several favorable properties in the convex setting that contribute to its wide use in algorithm design:
(i) $\prox{\gf}{\gamma}{}$ is single-valued at each $x\in \R^n$; (ii) it is $1$-Lipschitz continuous; (iii) it satisfies the identity
\[\argmint{x\in \R^n} \gf(x)=\bs{\rm Fix}(\prox{\gf}{\gamma}{}),\] 
where the latter set denotes the set of fixed points of $\prox{\gf}{\gamma}{}$; see, e.g.,  \cite[Chapter 12]{Bauschke17}. 

The generic {\it proximal-point method} is given by $x^{k+1}=\prox{\gf}{\gamma}{} (x^k)$, satisfying $x^k\to x^*\in \argmin{x\in \R^n} \gf(x)$ under convexity assumptions. Moreover, for convex $\gf$, it holds that
\begin{equation}\label{eq:relmorprox}
\prox{\gf}{\gamma}{} (x^k) = x^k - \gamma \nabla \gf_\gamma (x^k).
\end{equation}
implying that the proximal method can be interpreted as a gradient descent scheme with constant step-size $\gamma$; cf. \cite{beck2017first,Hoheisel2020regularization,Parikh14}.
Thanks to their simple structure and low memory requirements, proximal-point methods have received significant attention following the seminal works by Martinet \cite{martinet1970breve,martinet1972determination}; see, e.g., \cite{Ahookhosh24,Guler92,Kim21,Nesterov2023a,Parikh14,Salzo12} and references therein. 
In the nonconvex setting, however, the fundamental properties of the proximal operator and the Moreau envelope require more delicate assumptions, such as prox-regularity and calmness of the cost function in local or global regimes; we refer the reader to \cite{Poliquin96,Rockafellar09} for a detailed account.

Recent studies of proximal-point methods and the Moreau envelope have uncovered the further potential of these approaches in the presence of high-order regularization terms; see, e.g., \cite{Ahookhosh24,Ahookhosh23,Ahookhosh2025,Kabgani24itsopt,Kabganitechadaptive,KecisThibault15,Nesterov2022,Nesterov2023a,zhu2024global}. In these developments, as opposed to the classical case, the quadratic regularization in \eqref{eq:mor} and \eqref{eq:prox} is replaced by the term $\tfrac{1}{p} \|x-y\|^p$ for $p>1$, yielding a significantly more flexible framework.
In particular, the theoretical groundwork has been established in \cite{KecisThibault15}, while \cite{Kabgani24itsopt} introduces an {\it inexact two-level smoothing optimization framework} (ItsOPT) for general nonsmooth and nonconvex optimization problems.
This framework consists of two levels: (i) at the lower level, the high-order proximal auxiliary problems are solved inexactly to produce an inexact oracle for HOME; (ii) at the upper level, an inexact zero-, first- or second-order method is developed to minimize HOME. Furthermore, the framework has been adapted for solving nonsmooth weakly convex optimization problems in \cite{Kabganitechadaptive}.
The central role of the basic properties of HOME, including its differentiability and weak smoothness, in these methodologies motivates the present study of the fundamental and differential properties of HOME in the nonconvex setting.

%%%%%%%%%%%%%%%%%%%%%%%%%%%%%%%%%%%%%%%%%%%%%%%%%%%%%%%%%%%%%%%%%%%%%%%%%%%%%%%%%%%
\subsection{{\bf Contribution}} \label{sec:contributions}
Our contributions are threefold:
\begin{description}[wide, labelwidth=!, labelindent=0pt]
\item[{\bf (i)}] {\bf Fundamental properties of HOPE and HOME.}
We derive several fundamental properties of the high-order Moreau envelope (HOME) and the corresponding proximal operator (HOPE), including {\it coercivity} and {\it sublevel set relationships between $\varphi$ and $\varphi_\gamma$} (cf., Propositions~\ref{lem:hiordermor:coer}~and~\ref{pro:rel:sublevel}, Corollary~\ref{cor:rel:sublevel}), facilitating the design of algorithms on $\varphi_\gamma$. Furthermore, we introduce the notion of {\it $p$-calmness}, a key condition for establishing the differentiability of HOME, and characterize its relationships with classical reference points (cf., Theorem~\ref{lem:progpcalm}).

\item[{\bf (ii)}] {\bf Differentiability and weak smoothness of HOME.} We conduct a comprehensive analysis of the differentiability and weak smoothness properties of HOME for nonsmooth and nonconvex functions under $p$-calmness and $q$-prox-regularity assumptions (see Definitions~\ref{def:critic}~and~\ref{def:pprox-regular}), with $p>1$.
Our results demonstrate that HOME is continuously differentiable when either $q \geq 2$ and $p \in (1,2]$ or $2 \leq p \leq q$ (cf. Theorems~\ref{th:dif:proxreg12}~and~\ref{th:dif:proxreg}), and weakly smooth with H\"{o}lder-continuous gradients under broader conditions (cf. Theorems~\ref{th:dif:proxreg12:weaksm}~and~\ref{th:dif:proxreg:weak}). However, for $q < p$ the differential properties of HOME remain open.
Although the case $q \in (1, 2)$ is particularly interesting, its study lies beyond the scope of this paper and will be addressed in future work (see Remark~\ref{rem:remonqprox}~\ref{rem:remonqprox:d}).
The interplay between $p$ (in $p$-calmness) and $q$ (in $q$-prox-regularity) is summarized in Subfigure~(a) of Figure~\ref{fig:qpdiff}. 
Furthermore, the relationships among reference points of $\varphi$ and $\varphi_\gamma$ are clarified (cf. Fact~\ref{prop:relcrit}, Corollary~\ref{prop:relcrit2}, and Remark~\ref{rem:revofrel}.), as emphasized in Subfigure~(b) of Figure~\ref{fig:qpdiff}.

\item[{\bf (iii)}] {\bf The high-order proximal-point algorithm (HiPPA).}
The HiPPA algorithm is introduced and a simple convergence analysis toward proximal fixed points, using properties of HOME, is studied (cf. Theorem~\ref{th:HiPPA}). Preliminary numerical tests of HiPPA on Nesterov-Chebyshev-Rosenbrock functions demonstrate its promising potential for solving nonsmooth and nonconvex optimization problems.
\end{description}

%%%%%%%%%%%%%%%%%%%%%%%%%%%%%%%%%%%%%%%%%%%%%%%%%%%%%%%%%%%%%%%%%%%%%%%%%%%%

%%%%%%%%%%%%%%%%%%%%%%%%%%%%%%%%%%%%%%%%%%%%%%%%%%%%%%%%%%%%%%%%%%%%%%%%%%%%
\begin{figure}[H]
    \centering
    \subfloat[Differentiability of HOME]{
        \begin{tikzpicture}
            \begin{axis}[
                xmin=0.98, xmax=4,
                ymin=1.98, ymax=3,
                axis lines=middle,
                xlabel={$q$},
                ylabel={$p$},
                xtick={1, 2,3,4},
                ytick={1,2,3, 4},
                enlargelimits=false,
                width=5cm,
                height=5cm,
                axis equal,
                scale=1.5,
                xlabel style={at={(ticklabel cs:1)}, anchor=west, yshift=4ex},
                ylabel style={at={(ticklabel cs:1)}, anchor=south,xshift=4ex}
            ]

            \addplot[domain=2:4, ultra thick, color=blue] {x};

            \node[rotate=90] at (axis cs:1.5,2.5) {Remark~\ref{rem:remonqprox}~\ref{rem:remonqprox:d}};

            \fill[green!70, opacity=0.7] (axis cs:2,1) -- (axis cs:2,2) -- (axis cs:4,2) -- (axis cs:4,1) -- cycle;
            \node at (axis cs:3,1.6) [anchor=north, black] {Theorem~\ref{th:dif:proxreg12}};
  
            \fill[blue!50, opacity=0.7] (axis cs:2,2) -- (axis cs:4,4) -- (axis cs:4,2) -- cycle;
            \node at (axis cs:3.1,2.6) [anchor=north, black, rotate=45] {Theorem~\ref{th:dif:proxreg}};
    
            \fill[red!70, opacity=0.7] (axis cs:2,2) -- (axis cs:2,4) -- (axis cs:4,4) -- cycle;
            \node at (axis cs:2.6,3.5) [anchor=north, black, rotate=45] {Open};
            \end{axis}
        \end{tikzpicture}
         
     \hspace{15mm}

    }~
     \subfloat[Relationships among reference points]{
      \begin{tikzpicture}[
    node distance=0.5cm,
    box/.style={draw, rounded corners, minimum width=2.5cm, minimum height=1cm, align=center},
    arrow/.style={-{Stealth}}
]

\node[box] (Mcpsi) {$\bs{\rm Mcrit}(\gf)$};
\node[box, right=1.5cm of Mcpsi] (Fcrit) {$\bs{\rm Fcrit}(\gf)$};
\node[box, below=1.25cm of Mcpsi] (argmin) {$\argmint{y\in \R^n} \gf(y)$\\$=$\\$\argmint{y\in \R^n} \fgam{\gf}{p}{\gamma}(y)$};
\node[box, below=1.30cm of Fcrit] (Fin) {$\bs{\rm Fix}(\prox{\gf}{\gamma}{p})$\\$=$\\$\bs{\rm{Zero}}(\nabla\fgam{\gf}{p}{\gamma})$};
\node[box, below=1.25cm of argmin] (Fcrit2) {$\bs{\rm Fcrit}(\fgam{\gf}{p}{\gamma})$};
\node[box, right=1.5cm of Fcrit2] (Mcpsi2) {$\bs{\rm Mcrit}(\fgam{\gf}{p}{\gamma})$};

\draw[arrow] (Fcrit) --(Mcpsi);
%\draw[arrow] (Fin)-- node[midway,left,xshift=0.3cm,yshift=.65cm,rotate=90] {\tiny Th. \ref{prop:relcrit} $\ref{prop:relcrit:opfix}$}(Fcrit) ;
\draw[arrow]  (4.5,-1.81)--node[midway,left,xshift=0.3cm,yshift=.4cm,rotate=90] {\tiny C. \ref{prop:relcrit2}}(4.5,-0.5);
%\draw[arrow]  (argmin)-- (Fin);
\draw[arrow] (1.37,-2.05) --node[midway,above,xshift=0cm,yshift=0cm] {\tiny C. \ref{prop:relcrit2} } (2.76,-2.05);
\draw[arrow] (2.76,-2.65) --node[midway] {\Large $\times$} (1.37,-2.65);
%\draw[arrow] (2.76,-2.65) ---node[midway,below] {\tiny Re. \ref{rem:revofrel} $\ref{rem:revofrel:a}$} (1.37,-2.65);
\draw[arrow] (2.76,-2.65) --node[midway,below,yshift=-0.05cm] {\tiny R. \ref{rem:revofrel} $\ref{rem:revofrel:a}$} (1.37,-2.65);
\draw[arrow] (argmin) -- node[midway,left,xshift=-0.3cm,yshift=.4cm,rotate=90] {\tiny{C. \ref{prop:relcrit2}}}  (Fcrit2);
\draw[arrow] (argmin) --  node[midway,left,xshift=-0.3cm,yshift=.4cm,rotate=90] {\tiny C. \ref{prop:relcrit2}} (Mcpsi);
\draw[arrow] (Fcrit2) -- (Mcpsi2);
\draw[arrow] (Mcpsi2) -- node[midway,left,xshift=0.3cm,yshift=.4cm,rotate=90] {\tiny C. \ref{prop:relcrit2} } (Fin);
\draw[arrow] (3.5,-0.5) --node[midway,left,xshift=-0.3cm,yshift=.65cm,rotate=90] {\tiny R. \ref{rem:revofrel} $\ref{rem:revofrel:b}$} (3.5,-1.81);
\draw[arrow] (3.5,-0.5) --node[midway] {\Large $\times$} (3.5,-1.81);

\end{tikzpicture}
    }
    \caption{(a) Differentiability of HOME under $q$-prox-regularity and $p$-calmness ; (b)  Relationships among reference points of $\gf$ and $\fgam{\gf}{p}{\gamma}$.}
    \label{fig:qpdiff}
\end{figure}

%%%%%%%%%%%%%%%%%%%%%%%%%%%%%%%%%%%%%%%%%%%%%%%%%%%%%%%%%%%%%%%%%%%%%%%%%%%%%%%%%%%%%%%%%%%%%%%%%%%%%%%%%%
\subsection{{\bf Organization}} \label{sec:contributions}
The paper is organized as follows. Section~\ref{sec:preliminaries} introduces the necessary notation and preliminaries. Section~\ref{sec:home} discusses the structural properties of HOPE and HOME. Section~\ref{sec:on diff} investigates the differentiability and weak smoothness of HOME. Section~\ref{sec:applications} presents HiPPA and evaluates its performance on challenging nonsmooth and nonconvex optimization problems.  Finally,
we discuss the implications and limitations of our results in Section~\ref{sec:disc}.

%%%%%%%%%%%%%%%%%%%%%%%%%%%%%%%%%%%%%%%%%%%%%%%%%%%%%%%%%%%%%%%%%%%%%%%%%%%%%%%%%%%%%%%%%%%%%%%%%%%%%%%%%%
%%%%%%%%%%%%%%%%%%%%%%%%%%%%%%%%%%%%%%%%%%%%%%%%%%%%%%%%%%%%%%%%%%%%%%%%%%%%%%%%%%%%%%%%%%%%%%%%%%%%%%%%%%
\section{Preliminaries and notations} \label{sec:preliminaries}
 This section establishes the foundational notation and concepts used throughout the paper.
%%%%%%%%%%%%%%%%%%    n-dimensional Euclidean space
Let $\R^n$ denote the $n$-dimensional \textit{Euclidean space} endowed with the \textit{Euclidean norm} $\Vert\cdot\Vert$, and the standard \textit{inner product}
 $\langle\cdot, \cdot\rangle$.
%%%%%%%%%%%%%%%%%%    Balls in R^n:
We denote the open ball with center $\ov{x}\in \R^n$ and radius $r>0$ as $\mb(\ov{x}; r)$.
%%%%%%%%%%%%%%%%%%    interior:
The \textit{interior} of a set $C\subseteq \R^n$ is denoted by $\inter{C}$.
%%%%%%%%%%%%%%%%%%   distance:
The distance from $x\in \R^n$ to a nonempty set $C\subseteq\R^n$ is defined as 
$\dist(x,C):=\bs\inf_{y\in C}\Vert y - x\Vert$.
%%%%%%%%%%%%%%%%%%    infinity conventions:
We adopt the convention that $\infty - \infty = \infty$.
% $0\cdot\infty=0$, $\frac{1}{0}=\infty$, and $\infty \cdot \infty =\infty$.}

%%%%%%%%%%%%%%%%%%    effective domain:
For $\gh: \R^n \to \Rinf:=\R\cup\{+\infty\}$, the \textit{effective domain} is 
 $\dom{\gh}:= \{x \in   \R^{n}\mid~\gh(x)< + \infty \}$,
%%%%%%%%%%%%%%%%%%    Proper:
and $\gh$ is \textit{proper} if $\dom{\gh}\neq \emptyset$.
%%%%%%%%%%%%%%%%%%    sublevel set:
The \textit{sublevel set} of $\gh$ at height $\lambda \in \R$ is $\mathcal{L}(\gh, \lambda):=\{x\in \R^n\mid \gh(x)\leq \lambda\}$.
%%%%%%%%%%%%%%%%%%    Argmin
The set of minimizers of $\gh$ over $C\subseteq\R^n$ is denoted by $\argmin{x\in C}\gh(x)$.
%%%%%%%%%%%%%%%%%%    lower semicontinuous
The function $\gh$ is \textit{lower semicontinuous} (lsc) at $\ov{x} \in \R^{n}$ if, for any sequence $\{x^k\}_{k\in \mathbb{N}} \subseteq \R^{n}$ with $x^k \rightarrow \ov{x}$, we have $\bs\liminf_{k\rightarrow + \infty} \gh(x^k)\geq \gh(\ov{x})$. 
The function $\gh$ is lsc on $\R^n$ if it is lsc at every $x \in \R^n$.
%%%%%%%%%%%%%%%%%%    coercive
We say $\gh$ is \textit{coercive} if $\bs\lim_{\Vert x\Vert\to +\infty}  \gh(x)=+\infty$.
%%%%%%%%%%%%%%%%%%    set valued
For a set-valued mapping $T: \R^n \rightrightarrows \R^n$  its domain is 
\[\Dom{T}:=\{x\in\R^n\mid T(x)\neq\emptyset\}.\]
%%%%%%%%%%%%%%%%%% 
For $p>1$, the gradient of the function $\frac{1}{p}\Vert x\Vert^p$ is
\begin{align*}\label{eq:formulaofdiff}
\nabla\left(\frac{1}{p}\Vert x\Vert^p\right)
=\left\{
\begin{array}{ll}
\Vert x\Vert^{p-2} x~~& x\neq 0,\\
 0 & x=0.
\end{array}\right.
\end{align*}
Hence, we write $\nabla\left(\frac{1}{p}\Vert x\Vert^p\right)=\Vert x\Vert^{p-2} x$ by adopting the convention $\frac{0}{0}=0$ for $p\in (1, 2)$ and $x=0$.

%%%%%%%%%%%%%%%%%% 

We now present key inequalities essential for later sections.
%%%%%%%%%%%%%%%%%%%%%%%%%%%%%%%%%%%%%%%%%%%%
%%%%%%%%%%%%%%%%%%%%%%%%%%%%%%%%%%%%%%%%%%%%
\begin{fact}[Basic inequalities I]\label{lemma:ineq:inequality p} 
Let $a, b\in \R^n$. The following hold:
\begin{enumerate}[label=(\textbf{\alph*}), font=\normalfont\bfseries, leftmargin=0.7cm]
\item \label{lemma:ineq:inequality p:ineq1} For each $\lambda\in (0,1)$ and $p\geq 1$, 
$\Vert a+b\Vert^p\geq \lambda^{p-1}\Vert a\Vert^p-\left(\frac{\lambda}{1-\lambda}\right)^{p-1}\Vert b\Vert^p$.
\item \label{lemma:ineq:inequality p:ineq4} For each $p\geq 1$, $\Vert a-b\Vert^p\leq 2^{p-1}\left(\Vert a\Vert^p+\Vert b\Vert^p\right)$.
\item  \label{lemma:ineq:inequality p:ineq3} For each $p\geq 2$, $\langle \Vert a\Vert^{p-2}a - \Vert b\Vert^{p-2}b, a - b\rangle\geq \left(\frac{1}{2}\right)^{p-2} \Vert a - b\Vert^p$.
\end{enumerate}
\end{fact}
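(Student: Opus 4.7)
The three parts will be handled separately. Parts (a) and (b) are direct convexity arguments: for (a), the map $x \mapsto \Vert x\Vert^p$ is convex on $\R^n$ for $p \geq 1$, so writing $a$ as the convex combination $a = \lambda \cdot \frac{a+b}{\lambda} + (1-\lambda)\cdot\frac{-b}{1-\lambda}$ and applying Jensen's inequality yields $\Vert a\Vert^p \leq \lambda^{1-p}\Vert a+b\Vert^p + (1-\lambda)^{1-p}\Vert b\Vert^p$; multiplying through by $\lambda^{p-1}$ and rearranging give exactly the stated bound. For (b), the triangle inequality $\Vert a-b\Vert \leq \Vert a\Vert + \Vert b\Vert$ combined with Jensen's inequality for $t \mapsto t^p$ (giving $(\Vert a\Vert + \Vert b\Vert)^p \leq 2^{p-1}(\Vert a\Vert^p + \Vert b\Vert^p)$) delivers the conclusion.

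For part (c), I would first dispatch the trivial cases $a = 0$ or $b = 0$, and then assume $a, b \neq 0$. The key algebraic step is the expansion
\[
\langle \Vert a\Vert^{p-2}a - \Vert b\Vert^{p-2}b,\ a-b\rangle = \Vert a\Vert^p + \Vert b\Vert^p - \bigl(\Vert a\Vert^{p-2} + \Vert b\Vert^{p-2}\bigr)\langle a,b\rangle.
\]
Setting $s := \Vert a\Vert$, $t := \Vert b\Vert$, and $\mu := \langle a, b\rangle/(st) \in [-1,1]$, the target inequality becomes $L(\mu) \geq R(\mu)$, where $L(\mu) := s^p + t^p - st(s^{p-2} + t^{p-2})\mu$ is affine in $\mu$, while $R(\mu) := 2^{2-p}(s^2 + t^2 - 2st\mu)^{p/2}$ is convex in $\mu$ for $p \geq 2$ (being the composition of the convex map $u \mapsto u^{p/2}$ on $[0,\infty)$ with an affine function). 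Hence $L - R$ is concave on $[-1,1]$ and attains its minimum at one of the endpoints $\mu = \pm 1$, which reduces the problem to two scalar inequalities.

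At $\mu = 1$, with WLOG $s \geq t$, the inequality reduces to $s^{p-1} - t^{p-1} \geq 2^{2-p}(s-t)^{p-1}$, which follows from the stronger bound $s^{p-1} = ((s-t)+t)^{p-1} \geq (s-t)^{p-1} + t^{p-1}$ (valid for $p \geq 2$ via the elementary inequality $(1+r)^{p-1} \geq 1 + r^{p-1}$ for $r \geq 0$) together with $2^{2-p} \leq 1$. At $\mu = -1$, the inequality becomes $s^{p-1} + t^{p-1} \geq 2^{2-p}(s+t)^{p-1}$, which is Jensen's inequality $\bigl(\tfrac{s+t}{2}\bigr)^{p-1} \leq \tfrac{1}{2}(s^{p-1}+t^{p-1})$ applied to the convex map $u \mapsto u^{p-1}$ on $[0,\infty)$. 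The principal obstacle is the reformulation step in (c) that introduces the angular variable $\mu$ and reveals the concavity of $L-R$, thereby localizing verification to the extreme parallel/antiparallel configurations; once this structural observation is in place, the endpoint checks are routine convexity applications.
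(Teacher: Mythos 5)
Your proposal is correct, but it takes a genuinely different route from the paper: the paper proves nothing from scratch here, citing \cite[Lemma~2.1]{Kabgani24itsopt} for part (a), deriving (b) from (a) by taking $\lambda=\tfrac12$ and replacing $a$ with $a-b$, and citing \cite[Lemma~4.2.3]{Nesterov2018} for part (c). Your argument is entirely self-contained. For (a) and (b) your convexity/Jensen derivations are standard and match what underlies the cited results (note that (b) also drops out of your (a) immediately, as the paper observes, so your separate triangle-inequality argument is slightly redundant but harmless). The real added value is in (c): the reduction to the angular variable $\mu=\langle a,b\rangle/(st)$, the observation that the left side is affine and the right side convex in $\mu$ so that $L-R$ is concave and minimized at $\mu=\pm1$, and the two scalar endpoint checks via superadditivity of $u\mapsto u^{p-1}$ and Jensen, constitute a complete elementary proof of the uniform monotonicity inequality that the paper simply imports from Nesterov's book. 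All steps check out, including the degenerate cases ($a=0$ or $b=0$, and $s=t$ at the $\mu=1$ endpoint, where both sides vanish); the only price of your approach is length, while the benefit is that the fact becomes verifiable without external references.
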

\begin{proof}
For \ref{lemma:ineq:inequality p:ineq1}, see \cite[Lemma 2.1]{Kabgani24itsopt}. 
Assertion \ref{lemma:ineq:inequality p:ineq4} follows by setting $\lambda=\frac{1}{2}$, and substituting $a-b$ for $a$ in Assertion~$\ref{lemma:ineq:inequality p:ineq1}$. 
For \ref{lemma:ineq:inequality p:ineq3}, see \cite[Lemma~4.2.3]{Nesterov2018}.
\end{proof}
%%%%%%%%%%%%%%%%%%%%%%%%%%%%%%%%%%%%%%%%%%%%
%%%%%%%%%%%%%%%%%%%%%%%%%%%%%%%%%%%%%%%%%%%%
Let us consider the function $\kappa: (1, 2]\to (0, +\infty)$ given by
\begin{equation}\label{eq:formofKs:def}
\kappa(t):=\left\{
   \begin{array}{ll}
     \frac{(2+\sqrt{3})(t-1)}{16} & t\in (1, \widehat{t}], \\[0.2cm]
      \frac{2+\sqrt{3}}{16}\left(1-\left(3-\sqrt{3}\right)^{1-t}\right) ~~& t\in [\widehat{t},2),
      \\[0.2cm]
      1 & t=2,
   \end{array}\right.
\end{equation}
where $\widehat{t}$ is the solution of the equation
$\frac{t(t-1)}{2}=1-\left[1+\frac{(2-\sqrt{3})t}{t-1}\right]^{1-t}$, on $(1, 2]$,
and is determined numerically as $\widehat{t} \approx 1.3214$. For the sake of simplicity, we set $\kappa_t:=\kappa(t)$.
%%%%%%%%%%%%%%%%%%%%%%%%%%%%%%%%%%%%%%%%%%%%
%%%%%%%%%%%%%%%%%%%%%%%%%%%%%%%%%%%%%%%%%%%%
\begin{lemma}[Basic inequalities II]\label{lem:findlowbounknu:lemma}
Let $a, b\in \R^n$.
The following hold:
\begin{enumerate}[label=(\textbf{\alph*}), font=\normalfont\bfseries, leftmargin=0.7cm]
\item \label{lem:findlowbounknu:lemma:e2}
Let $r>0$ and $p\in (1,2]$. Then, for any $a, b\in \mb(0; r)$,
\begin{equation}\label{findlowbounknu:eq1}
\langle \Vert a\Vert^{p-2}a - \Vert b\Vert^{p-2}b, a-b\rangle\geq \kappa_pr^{p-2}\Vert a - b\Vert^2.
\end{equation}
\item \label{lem:findlowbounknu:lemma:e3}
Let $r>0$, $p\geq 2$, and $s=\frac{p}{p-1}$. Then, for any $a, b\in \mb(0; r)$,
\begin{equation}\label{findlowbounknu:eq1b}
\left\Vert\Vert a\Vert^{p-2}a - \Vert b\Vert^{p-2}b\right\Vert\leq  \frac{2r^{p-2}}{\kappa_{s}}\Vert a - b\Vert.
\end{equation}
\end{enumerate}
\end{lemma}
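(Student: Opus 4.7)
The plan is to reduce part (a) to a scalar inequality by locating the worst-case angular configuration of $(a,b)$, and to derive part (b) from part (a) via the Fenchel duality between $\tfrac{1}{p}\|\cdot\|^p$ and $\tfrac{1}{s}\|\cdot\|^s$.

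For part (a), the case $p=2$ is immediate since both sides equal $\|a-b\|^2$ and $\kappa_2=1$. For $p\in(1,2)$, the degenerate subcase $a=0$ (or $b=0$) reduces the bound to $\|b\|^{p-2}\geq\kappa_p r^{p-2}$, which follows from $\|b\|\leq r$, $p-2\leq 0$, and $\kappa_p\leq 1$. When both $a,b$ are nonzero, setting $\alpha=\|a\|$, $\beta=\|b\|$, and $c=\langle a,b\rangle$ allows one to rewrite the target inequality as
\begin{equation*}
\bigl(\alpha^p+\beta^p-\kappa_p r^{p-2}(\alpha^2+\beta^2)\bigr)-\bigl((\alpha^{p-2}+\beta^{p-2})-2\kappa_p r^{p-2}\bigr)c\geq 0.
\end{equation*}
Since $\alpha^{p-2},\beta^{p-2}\geq r^{p-2}$ and $\kappa_p\leq 1$, the coefficient of $c$ is nonnegative, so the worst case over admissible $c\in[-\alpha\beta,\alpha\beta]$ is $c=\alpha\beta$, corresponding to the parallel configuration. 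Under that configuration the inequality collapses to the scalar slope estimate
\begin{equation*}
\alpha^{p-1}-\beta^{p-1}\geq \kappa_p r^{p-2}(\alpha-\beta),\qquad \alpha,\beta\in[0,r],
\end{equation*}
which I would verify by observing that $(t^{p-1})'=(p-1)t^{p-2}\geq (p-1)r^{p-2}\geq \kappa_p r^{p-2}$ on $(0,r]$. The piecewise form of $\kappa_p$ with the crossover $\hat t$ suggests the authors' actual proof uses a more refined case analysis optimizing a free parameter whose first-order condition is the defining equation of $\hat t$ in \eqref{eq:formofKs:def}.

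For part (b), fix $p\geq 2$ and $s=p/(p-1)\in(1,2]$. Introduce $u:=\|a\|^{p-2}a$ and $v:=\|b\|^{p-2}b$. The identity $(p-1)(s-1)=1$ implies $a=\|u\|^{s-2}u$ and $b=\|v\|^{s-2}v$, while $\|u\|=\|a\|^{p-1}\leq r^{p-1}$ and similarly $\|v\|\leq r^{p-1}$, so that $u,v\in\mb(0;r^{p-1})$. Applying part (a) with exponent $s$ and radius $r^{p-1}$, and using $(p-1)(s-2)=2-p$, yields
\begin{equation*}
\langle a-b,u-v\rangle=\langle \|u\|^{s-2}u-\|v\|^{s-2}v,u-v\rangle\geq \kappa_s r^{2-p}\|u-v\|^2.
\end{equation*}
A Cauchy-Schwarz estimate of the left-hand side then gives $\|u-v\|\leq r^{p-2}\kappa_s^{-1}\|a-b\|$, from which the stated bound follows (the factor of $2$ in the paper's formulation being a convenient slack).

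The main obstacle will be recovering the exact piecewise form of $\kappa_p$ in part (a): once the reduction to the parallel configuration is completed the scalar inequality is elementary with constant $p-1$, but matching the specific constants $\frac{2+\sqrt{3}}{16}$, $(3-\sqrt{3})^{1-p}$, and the crossover $\hat t$ appears to require a delicate optimization that my straightforward reduction does not recover directly; part (b) is then a clean conjugate-duality transcription.
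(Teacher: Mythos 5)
Your proof is correct, but it takes a genuinely different route from the paper. The paper obtains both inequalities by invoking Xu--Roach's characteristic inequalities for uniformly convex and uniformly smooth spaces (their eqs.~(1.1), (1.3), (3.5)), simplifying the constant $K_p$ for $p\in(1,2)$ via the case analysis on $h_1,h_2,h_3$ that produces the piecewise form of $\kappa_p$ and the crossover $\widehat{t}$, and then crudely bounding $1-\sqrt{1-x}$ and $\Theta(z)$; part (b) is proved there independently of part (a). You instead exploit the Hilbert-space structure directly: for (a), the difference of the two sides is affine and nonincreasing in $c=\langle a,b\rangle$ (since $\alpha^{p-2}+\beta^{p-2}\geq 2r^{p-2}\geq 2\kappa_p r^{p-2}$), so the worst case is the parallel configuration, where the mean value theorem gives the constant $(p-1)r^{p-2}$ --- strictly stronger than $\kappa_p r^{p-2}$; for (b), you invert $x\mapsto\Vert x\Vert^{p-2}x$ via the conjugate exponent and apply (a) plus Cauchy--Schwarz, recovering the bound with constant $r^{p-2}/\kappa_s$, a factor of $2$ better than the paper's. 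Your argument is more elementary, self-contained, and yields sharper constants; the paper's buys consistency with the cited Banach-space literature and explains where the exotic form of $\kappa_p$ comes from. One point you should make explicit rather than leave as a worry: you do \emph{not} need to recover the piecewise form of $\kappa_p$ at all --- you only need $\kappa_p\leq\bs\min\{1,p-1\}$ on $(1,2]$, which is immediate from \eqref{eq:formofKs:def} since $\tfrac{2+\sqrt{3}}{16}<1$, the bracket $1-(3-\sqrt{3})^{1-t}$ lies in $(0,1)$, and $\widehat{t}-1\approx 0.32>\tfrac{2+\sqrt{3}}{16}$. With that one-line check inserted, both parts are complete.
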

\begin{proof}
For $p=2$, both inequalities are straightforward. We now address the other claims.\\
$\ref{lem:findlowbounknu:lemma:e2}$
From \cite[Remark~1 and eq.~(1.1)]{XuRoach}, we have
\begin{equation*}\label{eq:eq1:findlowbounknu:eq2}
\langle \Vert a\Vert^{p-2}a - \Vert b\Vert^{p-2}b, a-b\rangle\geq \frac{K_{p}}{p} \left(\bs\max\{\Vert a\Vert, \Vert b\Vert\}\right)^p \left(1-\sqrt{1-\frac{\Vert a - b\Vert^2}{16(\bs\max\{\Vert a\Vert, \Vert b\Vert\})^2}}\right),
\end{equation*}
where
\begin{align*}\label{formofKs:def}
   K_p:=4(2+\sqrt{3})\bs\min \Biggl\{&\bs\min\left\{\frac{p(p-1)}{2},1\right\}, \bs\min\left\{\frac{p}{2},1\right\}(p-1),\\&
    (p-1)\left[1-(\sqrt{3}-1)^{\frac{p}{p-1}}\right], 1-\left[1+\frac{(2-\sqrt{3})p}{p-1}\right]^{1-p}\Biggr\}.
 \end{align*}
Since $p\in (1, 2)$, this simplifies to
 \begin{align*}
   K_{p}=4(2+\sqrt{3})\bs\min \left\{\frac{p(p-1)}{2}, (p-1)\left[1-(\sqrt{3}-1)^{\frac{p}{p-1}}\right], 1-\left[1+\frac{(2-\sqrt{3})p}{p-1}\right]^{1-p}\right\}.
 \end{align*}
 Define
\begin{align*}
h_1(p):=\frac{p(p-1)}{2},~~
h_2(p):=(p-1)\left[1-(\sqrt{3}-1)^{\frac{p}{p-1}}\right],~~
h_3(p):=1-\left[1+\frac{(2-\sqrt{3})p}{p-1}\right]^{1-p}.
\end{align*}
From Fig.~\ref{fig:thetas}, it follows that $ K_{p}$ can be computed as 
\begin{equation}\label{eq:formofKs:def3}
K_p=\left\{
\begin{array}{ll}
2(2+\sqrt{3})p(p-1)& p\in (1, \widehat{t}],  \\[0.2cm]
      4(2+\sqrt{3})\left(1-\left[1+\frac{(2-\sqrt{3})p}{p-1}\right]^{1-p}\right)~~ &p\in [\widehat{t}, 2),
\end{array}\right.
\end{equation}
where $\widehat{t}$ is the solution of the nonlinear system $h_1(p)=h_3(p)$.

 \begin{figure}[H]
\centering
\begin{tikzpicture}
\begin{axis}[
    xlabel={$p$},
    ylabel={$h_i, i=1,2,3$},
    xmin=1, xmax=2,
    ymin=0, ymax=0.6,
    axis lines=middle,
    width=0.55\linewidth, 
    height=0.3\linewidth, 
    samples=400,
]
\addplot[color=red,domain=1:2,dashed,thick]{0.5*x*(x-1)} node[pos=0.55, right] {$h_1$};
\addplot[color=blue,domain=1:2,dotted,thick]{(x-1)-(x-1)*((sqrt(3)-1)^((x)/(x-1)))} node[pos=0.90,   below right] {$h_2$};
\addplot[color=green,domain=1:2,smooth,thick]{1-(1+((2-sqrt(3))*(x)/(x-1)))^(1-x)} node[pos=0.9, below right] {$h_3$};
\addplot[color=black, mark=*, only marks] coordinates {(1.32141416, 0.2125)};
\addplot[color=black, mark=*, only marks] coordinates {(1.32141416, 0)};
\draw[dashed] (axis cs:1.32141416, 0.2125) -- (axis cs:1.32141416, 0) node[pos=0.6, below left] {$\widehat{t}$};
\end{axis}
\end{tikzpicture}
\caption{Plot of $h_1$, $h_2$, and $h_3$ in the proof of Lemma~\ref{lem:findlowbounknu:lemma}.}\label{fig:thetas}
\end{figure}

On the other hand, since
$1-\sqrt{1-\frac{\Vert a - b\Vert^2}{16(\bs\max\{\Vert a\Vert, \Vert b\Vert\})^2}}\geq \frac{\Vert a - b\Vert^2}{32(\bs\max\{\Vert a\Vert, \Vert b\Vert\})^2}$,
it follows that
\begin{equation}\label{findlowbounknu:eq2}
\langle \Vert a\Vert^{p-2}a - \Vert b\Vert^{p-2}b, a-b\rangle\geq \frac{K_{p}}{p} \left(\bs\max\{\Vert a\Vert, \Vert b\Vert\}\right)^{p-2}  \frac{\Vert a - b\Vert^2}{32}.
\end{equation}
 Since $\bs\max\{\Vert a\Vert, \Vert b\Vert\}\leq r$ and $p\in (1,2)$, we have $\left(\bs\max\{\Vert a\Vert, \Vert b\Vert\}\right)^{p-2}\geq r^{p-2}$.
Therefore, combining this with \eqref{findlowbounknu:eq2} yields
\[
\langle \Vert a\Vert^{p-2}a - \Vert b\Vert^{p-2}b, a-b\rangle\geq \frac{K_p r^{p-2}}{32 p}\Vert a - b\Vert^2.
\]
If $p\in (1, \widehat{t}]$, then from \eqref{eq:formofKs:def3}, we obtain
\begin{equation}\label{eq:findlowbounknu:eq2:a}
 \frac{K_p}{32 p}=\frac{2(2+\sqrt{3})p(p-1)}{32 p}=\frac{(2+\sqrt{3})(p-1)}{16}.
\end{equation}
Additionally, for $p\in [\widehat{t}, 2)$, we have $1+\frac{(2-\sqrt{3})p}{p-1}\geq 3-\sqrt{3}$ and hence
$\left[1+\frac{(2-\sqrt{3})p}{p-1}\right]^{1-p}\leq \left[3-\sqrt{3}\right]^{1-p}$. Thus,
\begin{align}\label{eq:findlowbounknu:eq2:b}
\frac{K_p}{32 p}=\frac{ 4(2+\sqrt{3})\left(1-\left[1+\frac{(2-\sqrt{3})p}{p-1}\right]^{1-p}\right)}{32 p}
&\geq \frac{4(2+\sqrt{3})\left(1-\left[3-\sqrt{3}\right]^{1-p}\right)}{64}
\nonumber\\&= \frac{(2+\sqrt{3})\left(1-\left[3-\sqrt{3}\right]^{1-p}\right)}{16}.
\end{align}
From \eqref{eq:findlowbounknu:eq2:a} and \eqref{eq:findlowbounknu:eq2:b}, we obtain \eqref{findlowbounknu:eq1}.
\\
$\ref{lem:findlowbounknu:lemma:e3}$ 
Let $p>2$ and set $s=\frac{p}{p-1}$. Thus, $s\in (1, 2)$.
From \cite[eqs.~(1.3)~and~(3.5)]{XuRoach}, we obtain
\begin{equation}\label{eq1:lem:findlowbounknu:lemma:e3}
\left\Vert\Vert a\Vert^{p-2}a - \Vert b\Vert^{p-2}b\right\Vert\leq \frac{K_s}{s}\frac{(\bs\max\{\Vert a\Vert, \Vert b\Vert\})^{p}}{\Vert a - b\Vert}\Theta(z),
\end{equation}
where $K_s$ is given by \eqref{eq:formofKs:def3}, 
$z=\frac{8\Vert a - b\Vert}{\frac{K_s}{s}\bs\max\{\Vert a\Vert, \Vert b\Vert\}}$, and
\[\Theta(z)=\left\{\begin{array}{ll}
                       \bs\max\{0, z-1\}~~ & n=1, \\\\
                       \sqrt{1+z^2}-1 & n\geq 2.
                     \end{array}\right.
                     \]
Since $\Theta(z)\leq z^2$ for all $n\in\mathbb{N}$, 
inequality \eqref{eq1:lem:findlowbounknu:lemma:e3} together with $\bs\max{\Vert a\Vert, \Vert b\Vert}\leq r$ yields
\begin{align*}
\left\Vert\Vert a\Vert^{p-2}a - \Vert b\Vert^{p-2}b\right\Vert&\leq \frac{64(\bs\max\{\Vert a\Vert, \Vert b\Vert\})^{p-2}}{\frac{K_s}{s}}\Vert a - b\Vert\leq  \frac{64r^{p-2}}{\frac{K_s}{s}}\Vert a - b\Vert.
\end{align*}
Similar to the proof of Assertion~$\ref{lem:findlowbounknu:lemma:e2}$,
if $s\in (1, \widehat{t}]$, then
\begin{equation}\label{eq:findlowbounknu:eq2:c}
\frac{K_s}{64s}=
\frac{(2+\sqrt{3})(s-1)}{32}=\frac{\kappa_s}{2}.
\end{equation}
For $s\in [\widehat{t}, 2)$, we have 
\begin{align}\label{eq:findlowbounknu:eq2:d}
\frac{K_s}{64 s}=\frac{ 4(2+\sqrt{3})\left(1-\left[1+\frac{(2-\sqrt{3})s}{s-1}\right]^{1-s}\right)}{64 s}
&\geq \frac{(2+\sqrt{3})\left(1-\left[3-\sqrt{3}\right]^{1-s}\right)}{32}=\frac{\kappa_s}{2}.
\end{align}
Therefore, combining \eqref{eq:findlowbounknu:eq2:c} and \eqref{eq:findlowbounknu:eq2:d} establishes \eqref{findlowbounknu:eq1b}.
\end{proof}
Note that in Lemma~\ref{lem:findlowbounknu:lemma}~$\ref{lem:findlowbounknu:lemma:e3}$, when $p=2$, one could replace $\frac{2}{\kappa_{s}}$ with $\frac{1}{\kappa_{s}}=1$. However, to maintain consistency, we do not treat this case separately. If needed, it will be addressed explicitly in the text.

%%%%%%%%%%%%%%%%%%%%%%%%%%%%%%%%%%%%%%%%%%%%%%%%%%%%%%%%%%%%%%%%%%%%%%%%%%%%%%%%%%%
%%%%%%%%%%%%%%%%%%%%%%%%%%%%%%%%%%%%%%%%%%%%%%%%%%%%%%%%%%%%%%%%%%%%%%%%%%%%%%%%%%%
A proper function $\gh: \R^n \to \Rinf$  is said to be \textit{Fr\'{e}chet differentiable} at $\ov{x}\in \inter{\dom{\gh}}$ with \textit{Fr\'{e}chet derivative}  
$\nabla \gh(\ov{x})$
 if 
\[
\mathop{\bs\lim}\limits_{x\to \ov{x}}\frac{\gh(x) -\gh(\ov{x}) - \langle \nabla \gh(\ov{x}) , x - \ov{x}\rangle}{\Vert x - \ov{x}\Vert}=0.
\]
%%%%%%%%%%%%%%%%%%%%%%%%%%%%%%%%%%%%%%%%%%%%%%%%%%%%%%%%%%%%%%%%%%%%%%%%%%%%%%%%%%%
%%%%%%%%%%%%%%%%%%%%%%%%%%%%%%%%%%%%%%%%%%%%%%%%%%%%%%%%%%%%%%%%%%%%%%%%%%%%%%%%%%%
Here, we discuss some important tools from generalized differentiability. 

\begin{definition}[Fr\'{e}chet/regular and Mordukhovich/limiting subdifferentials]\cite{Mordukhovich2018,Rockafellar09}
Let $\gh: \R^n \to \Rinf$ be a proper function and $\ov{x}\in \dom{\gh}$.
\begin{enumerate}[label=(\textbf{\alph*}), font=\normalfont\bfseries, leftmargin=0.7cm]
\item The \textit{Fr\'{e}chet/regular subdifferential} of $\gh$ at $\ov{x}$  is defined as
\[
\widehat{\partial}\gh(\ov{x}):=\left\{\zeta\in \R^n\mid~\mathop{\bs\liminf}\limits_{x\to \ov{x}}\frac{\gh(x)- \gh(\ov{x}) - \langle \zeta, x - \ov{x}\rangle}{\Vert x - \ov{x}\Vert}\geq 0\right\}.
\]
\item The \textit{Mordukhovich/limiting  subdifferential} of $\gh$ at $\ov{x}$ is defined as
\[
\partial \gh(\ov{x}):=\left\{\zeta\in \R^n\mid~\exists x^k\to \ov{x}, \zeta^k\in \widehat{\partial}\gh(x^k),~~\text{with}~~\gh(x^k)\to \gh(\ov{x})~\text{and}~ \zeta^k\to \zeta\right\}.
\]
\end{enumerate}
\end{definition}

%%%%%%%%%%%%%%%%
In general, we have $\widehat{\partial}\gh(\ov{x})\subseteq\partial \gh(\ov{x})$
see \cite[Theorem 8.6]{Rockafellar09}. Moreover, equality holds for several important classes of functions, such as smooth, convex, and amenable functions; see \cite{Mordukhovich2018,Rockafellar09}. In this work, we examine the relationships between minimizers of $\gh$, its high-order Moreau envelope, zeros of these subdifferentials, and other related notions. It is also worth noting that the Mordukhovich (limiting) subdifferential plays a central role in the definitions of prox-regularity and $q$-prox-regularity, which are the key variational notions used in this work.

%%%%%%%%%%%%%%%%
The class of \textit{prox-regular functions}, described next, encompasses a wide array of significant functions encountered in optimization, including, but not limited to, proper lsc convex functions, $\mathcal{C}^{2}$ functions, lower-$\mathcal{C}^{2}$ functions, strongly amenable functions, primal-lower-nice functions \cite{Poliquin96,Rockafellar09}, weakly convex functions \cite{nurminskii1973quasigradient}, variationally convex functions \cite{Khanh23,Rockafellar2019Variational}, and more; see, e.g., \cite{Ahookhosh21, Bareilles2023, Lewis2016, Mordukhovich2016, Mordukhovich2021} and references therein for some applications.

 \begin{definition}[Prox-regularity]\label{def:prox-regular}\cite{Poliquin96}
Let $\gh: \R^n\to \Rinf$ be a proper lsc function and let $\ov{x}\in\dom{\gh}$. Then, $\gh$ is said to be \textit{prox-regular} at $\ov{x}$ for $\ov{\zeta}\in\partial \gh(\ov{x})$ if there exist $\varepsilon>0$ and $\rho\geq 0$ such that
\[
\gh(x')\geq \gh(x)+\langle \zeta, x'-x\rangle-\frac{\rho}{2}\Vert x'-x\Vert^2,~\qquad \forall x'\in \mb(\ov{x}; \varepsilon),
\]
whenever $x\in \mb(\ov{x}; \varepsilon)$, $\zeta\in \partial \gh(x)\cap \mb(\ov{\zeta}; \varepsilon)$, and $\gh(x)<\gh(\ov{x})+\varepsilon$.
\end{definition}

%%%%%%%%%%%%%%%%%%
We conclude this section by introducing the class of functions with $\nu$-H\"{o}lder continuous gradient. Later, we show that, under certain conditions, the high-order Moreau envelope of a function possesses this property.
\begin{definition}[H\"{o}lder continuous gradient]
A proper function $\gh: \R^n \to\Rinf$  is said to have a \textit{$\nu$-H\"{o}lder continuous gradient} on $C\subseteq \dom{\gh}$ with $\nu\in (0, 1]$ if it is Fr\'{e}chet differentiable and  there exists a constant $L_\nu\geq 0$ such that
\begin{equation}\label{eq:nu-Holder continuous gradient}
\Vert \nabla \gh(y)- \nabla \gh(x)\Vert \leq L_\nu \Vert y-x\Vert^\nu, \qquad \forall x, y\in C.
\end{equation}
\end{definition}
The class of such functions is denoted by $\mathcal{C}^{1, \nu}_{L_\nu}(C)$, and are called weakly smooth. 
We use $\mathcal{C}^{k}(C)$ to denote the class of functions that are $k$-times continuously differentiable on $C$, for $k\in \mathbb{N}$.

\section{Moreau envelope with high-order regularization}
\label{sec:home}
Here, we first introduce the high-order proximal operator (HOPE) and the high-order Moreau envelope (HOME) and recall their basic properties (Fact~\ref{th:level-bound+locally uniform}), including the non-emptiness of HOPE as well as the finiteness and continuity of HOME under the concept of high-order prox-boundedness 
 (Definition~\ref{def:s-prox-bounded}). Next, several additional properties of HOPE and HOME will be discussed. 

 %%%%%%%%%%%%
Let us begin with the definitions of HOPE and HOME by considering a $p$th-order regularization term, for $p>1$.
%%%%%%%%%%%%%%%%%%%%%%%%%%%%%%%%%%%%%%%%%%%%%%%%%%%%%%%%%%%%%%%%%%%%%%%%%%%%%%%%%%%
%%%%%%%%%%%%%%%%%%%%%%%%%%%%%%%%%%%%%%%%%%%%%%%%%%%%%%%%%%%%%%%%%%%%%%%%%%%%%%%%%%%
\begin{definition}[High-order proximal operator and Moreau envelope]\label{def:Hiorder-Moreau env}
Let $p>1$  and $\gamma>0$, and let $\gf: \R^n \to \Rinf$ be a proper function. 
    The \textit{high-order proximal operator} (\textit{HOPE}) of $\gf$ of parameter $\gamma$,
    $\prox{\gf}{\gamma}{p}: \R^n \rightrightarrows \R^n$, is defined as
   \begin{equation}\label{eq:Hiorder-Moreau prox}\tag{HOPE}
       \prox{\gf}{\gamma}{p} (x):=\argmint{y\in \R^n} \left(\gf(y)+\frac{1}{p\gamma}\Vert x- y\Vert^p\right),
    \end{equation}     
    and the \textit{high-order Moreau envelope} (\textit{HOME}) of $\gf$ of parameter $\gamma$, 
    $\fgam{\gf}{p}{\gamma}:\R^n\to \R\cup\{\pm \infty\}$, 
    is given by
    \begin{equation}\label{eq:Hiorder-Moreau env}\tag{HOME}
    \fgam{\gf}{p}{\gamma}(x):=\mathop{\bs{\inf}}\limits_{y\in \R^n} \left(\gf(y)+\frac{1}{p\gamma}\Vert x- y\Vert^p\right).
    \end{equation}
\end{definition}
%%%%%%%%%%%%%%%%%%%%%%%%%%%%%%%%%%%%%%%%%%%%%%%%%%%%%%%%%%%%%%%%%%%%%%%%%%%%%%%%%%%
%%%%%%%%%%%%%%%%%%%%%%%%%%%%%%%%%%%%%%%%%%%%%%%%%%%%%%%%%%%%%%%%%%%%%%%%%%%%%%%%%%%
Note that HOPE is a set-valued operator, while HOME is a function.
HOME is also known as the epigraphical regularization with parameter $\gamma$ of the function $\gf$ \cite[page~701]{Attouch91} and the Moreau $p$-envelope \cite{KecisThibault15}.

%%%%%%%%%%%%%%%%%%%%%%%%%%%%%%%%%%%%%%%%%%%%%%%%%%%%%%%%%%%%%%%%%%%%%%%%%%%%%%%%%%%
%%%%%%%%%%%%%%%%%%%%%%%%%%%%%%%%%%%%%%%%%%%%%%%%%%%%%%%%%%%%%%%%%%%%%%%%%%%%%%%%%%%
\subsection{{\bf Fundamental properties of HOPE and HOME}}
\label{subsec:well-defi}
The finite-valuedness of HOME and the non-emptiness of HOPE are two crucial properties that require careful investigation. In this subsection, we establish certain assumptions that ensure these properties; 
see Fact~\ref{th:level-bound+locally uniform}. Before diving into the details, let us first introduce some preliminary concepts.

Fact~\ref{fact:horder:Bauschke17:p12.9} collects some well-established properties of HOME.
\begin{fact}[Domain and majorizer for HOME]\cite[Proposition~12.9]{Bauschke17}\label{fact:horder:Bauschke17:p12.9}
Let $p> 1$ and $\gf:\R^n \to \Rinf$ be a proper function. Then, for every $\gamma > 0$,
$\dom{\fgam{\gf}{p}{\gamma}}=\R^n$. Additionally, for every $\gamma_2>\gamma_1>0$ and $x\in \R^n$, $\fgam{\gf}{p}{\gamma_2}(x)\leq \fgam{\gf}{p}{\gamma_1}(x)\leq \gf(x)$.
\end{fact}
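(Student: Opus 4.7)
The plan is to verify all three assertions directly from the infimum that defines HOME, since each reduces to a one-line comparison obtained by choosing a specific feasible~$y$.

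For the domain claim $\dom{\fgam{\gf}{p}{\gamma}} = \R^n$, I would first invoke properness of $\gf$ to fix some $y_0 \in \dom{\gf}$, which guarantees $\gf(y_0) < +\infty$. Then, for an arbitrary $x \in \R^n$, substituting $y = y_0$ into the infimum yields $\fgam{\gf}{p}{\gamma}(x) \leq \gf(y_0) + \frac{1}{p\gamma}\Vert x - y_0\Vert^p < +\infty$, so $x$ belongs to $\dom{\fgam{\gf}{p}{\gamma}}$ by the convention fixed in Section~\ref{sec:preliminaries}. For the upper bound $\fgam{\gf}{p}{\gamma_1}(x) \leq \gf(x)$, I would simply test the infimum at $y = x$, producing $\fgam{\gf}{p}{\gamma_1}(x) \leq \gf(x) + 0 = \gf(x)$.

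For the monotonicity in $\gamma$, I would observe that whenever $\gamma_2 > \gamma_1 > 0$, the scalar $\tfrac{1}{p\gamma_2}$ does not exceed $\tfrac{1}{p\gamma_1}$; consequently, pointwise in $y \in \R^n$ one has $\gf(y) + \tfrac{1}{p\gamma_2}\Vert x-y\Vert^p \leq \gf(y) + \tfrac{1}{p\gamma_1}\Vert x-y\Vert^p$, and taking infimum over $y$ on both sides yields $\fgam{\gf}{p}{\gamma_2}(x) \leq \fgam{\gf}{p}{\gamma_1}(x)$.

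There is no genuine obstacle here; the statement is a direct unpacking of the definition. The only mild subtlety worth flagging is that, under the paper's convention $\dom{\gh} = \{x \mid \gh(x) < +\infty\}$, the domain assertion does not preclude $\fgam{\gf}{p}{\gamma}$ from taking the value $-\infty$; ruling out this pathology and recovering genuine finiteness from below requires an additional high-order prox-boundedness hypothesis, which is not claimed in this fact and is handled separately later in the paper.
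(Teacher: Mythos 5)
Your proof is correct and is exactly the standard argument behind the cited Proposition 12.9 of Bauschke–Combettes (the paper gives no proof of its own, only the citation): test the infimum at a fixed $y_0\in\dom{\gf}$ for the domain claim, at $y=x$ for the majorization, and compare the regularization terms pointwise before taking infima for the monotonicity. Your closing remark about the convention not excluding the value $-\infty$ is also accurate and consistent with how the paper defers that issue to high-order prox-boundedness.
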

As indicated by Fact~\ref{fact:horder:Bauschke17:p12.9}, for a proper function $\gf$, one has $\fgam{\gf}{p}{\gamma}(x) < +\infty$ for every $\gamma > 0$. However, this does not ensure finiteness of $\fgam{\gf}{p}{\gamma}$ in nonconvex settings. In particular, there may be instances with $\fgam{\gf}{p}{\gamma}(x) = -\infty$ for each $\gamma>0$ and $x \in \R^n$ (see Example~\ref{ex:nondif:prox:prox}).
To prevent this situation, Poliquin and Rockafellar \cite{Poliquin96} assumed (for $p = 2$) that $\gf$ majorizes a quadratic function; see Assumption~4.1 in \cite{Poliquin96}. This condition, which we refer to as $2$-calmness, implies the existence of some $\gamma > 0$ and $x \in \R^n$ such that $\fgam{\gf}{2}{\gamma}(x) > -\infty$, a property now widely recognized as prox-boundedness (see \cite[Definition~1.23]{Rockafellar09}). In Subsection~\ref{subsec:critical}, we thoroughly examine the connections between these notions. 
Let us generalize the prox-boundedness for $p > 1$.

%%%%%%%%%%%%%%%%%%%%%%%%%%%%%%%%%%%%%%%%%%%%%%%%%%%%%%%%%%%%%%%%%%%%%%%%%%%%%%%%%%%
%%%%%%%%%%%%%%%%%%%%%%%%%%%%%%%%%%%%%%%%%%%%%%%%%%%%%%%%%%%%%%%%%%%%%%%%%%%%%%%%%%%
\begin{definition}[High-order prox-boundedness]\label{def:s-prox-bounded}
A function $\gf:\R^n\to \Rinf$ is said to be \textit{high-order prox-bounded} with order $p> 1$, if there exist a $\gamma>0$ and $x\in \R^n$ such that
$\fgam{\gf}{p}{\gamma}(x)>-\infty$. 
The supremum of all such $\gamma$ is denoted by $\gamma^{\gf, p}$ and is referred to as the threshold of high-order prox-boundedness of $\gf$.
\end{definition}
%%%%%%%%%%%%%%%%%%%%%%%%%%%%%%%%%%%%%%%%%%%%%%%%%%%%%%%%%%%%%%%%%%%%%%%%%%%%%%%%%%%
%%%%%%%%%%%%%%%%%%%%%%%%%%%%%%%%%%%%%%%%%%%%%%%%%%%%%%%%%%%%%%%%%%%%%%%%%%%%%%%%%%%
The following characterizations of high-order prox-boundedness will be useful in the remainder of this paper.
\begin{proposition}[Characterizations of high-order prox-boundedness]\label{lemma:charac:sprox}
Let $p> 1$ and $\gf:\R^n\to \Rinf$ be a proper lsc function. The following statements are equivalent:
\begin{enumerate}[label=(\textbf{\alph*}), font=\normalfont\bfseries, leftmargin=0.7cm]
  \item \label{lemma:charac:sprox:a} $\gf$ is high-order prox-bounded;
  \item \label{lemma:charac:sprox:b} there exists an $\ell>0$ such that $\gf(\cdot)+\ell\Vert \cdot\Vert^p$ is bounded from below on $\R^n$;
  \item \label{lemma:charac:sprox:c} $\mathop{\bs\liminf}\limits_{\Vert x\Vert\to \infty} \frac{\gf(x)}{\Vert x\Vert^p}>-\infty$.
\end{enumerate}
\end{proposition}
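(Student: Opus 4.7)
The plan is to establish the chain of implications $(a) \Rightarrow (b) \Rightarrow (c) \Rightarrow (b) \Rightarrow (a)$, with the symmetric triangle-type inequality $\Vert a - b\Vert^p \leq 2^{p-1}\left(\Vert a\Vert^p + \Vert b\Vert^p\right)$ from Fact~\ref{lemma:ineq:inequality p}~\ref{lemma:ineq:inequality p:ineq4} serving as the main algebraic workhorse in both directions between $(a)$ and $(b)$.

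For $(a) \Rightarrow (b)$, I would unpack the assumption as a global pointwise inequality $\gf(y) \geq M - \frac{1}{p\gamma_0}\Vert x_0 - y\Vert^p$ for all $y\in\R^n$, where $M := \fgam{\gf}{p}{\gamma_0}(x_0) > -\infty$, and then apply the inequality above to rewrite the right-hand side in the form $M' - \ell\Vert y\Vert^p$ with $\ell := 2^{p-1}/(p\gamma_0)$ and a constant $M'$ depending only on $x_0$. This yields $(b)$ immediately. For the converse $(b) \Rightarrow (a)$, I would fix any $x \in \R^n$ and apply the same inequality to bound $\Vert y\Vert^p$ in terms of $\Vert y - x\Vert^p$ and $\Vert x\Vert^p$; then, choosing $\gamma > 0$ small enough that $\frac{1}{p\gamma} - \ell 2^{p-1} \geq 0$, the objective $\gf(y) + \frac{1}{p\gamma}\Vert x - y\Vert^p$ reduces to a nonnegative coefficient times $\Vert y - x\Vert^p$ plus a finite additive constant, so its infimum over $y$ is finite.

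For $(b) \Leftrightarrow (c)$, the forward direction is immediate: dividing $\gf(y) + \ell\Vert y\Vert^p \geq M$ by $\Vert y\Vert^p$ and passing to $\liminf$ as $\Vert y\Vert \to \infty$ gives $\liminf_{\Vert y\Vert \to \infty} \gf(y)/\Vert y\Vert^p \geq -\ell > -\infty$. For $(c) \Rightarrow (b)$, the liminf condition furnishes an $R > 0$ and an $\ell_0 > 0$ such that $\gf(y) \geq -\ell_0\Vert y\Vert^p$ whenever $\Vert y\Vert > R$; on the complementary closed ball $\overline{\mb}(0; R)$, properness combined with lower semicontinuity guarantees a finite lower bound for $\gf$, since a proper lsc function on a compact set attains its infimum in $\R \cup \{+\infty\}$. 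Splicing these two estimates produces a uniform lower bound for $\gf(\cdot) + \ell_0\Vert\cdot\Vert^p$ on all of $\R^n$.

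The only non-routine point will be the lsc-on-compact step in $(c) \Rightarrow (b)$: this is where the hypotheses that $\gf$ is proper and lsc are actually used, because the equivalence $(a) \Leftrightarrow (b)$ and the implication $(b) \Rightarrow (c)$ hold for arbitrary proper functions. Aside from this, the entire argument is careful bookkeeping with the $2^{p-1}$ inequality and selecting a single constant $\ell$ that simultaneously absorbs the behavior inside and outside the ball of radius $R$.
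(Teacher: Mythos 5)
Your proof is correct, and the first two links ($(a)\Rightarrow(b)$ via the inequality $\Vert a-b\Vert^p\leq 2^{p-1}(\Vert a\Vert^p+\Vert b\Vert^p)$, and $(b)\Rightarrow(c)$ by dividing and passing to the liminf) coincide with the paper's argument. Where you diverge is in closing the loop: the paper proves $(c)\Rightarrow(a)$ directly by invoking \cite[Exercise~1.14]{Rockafellar09} to extract a global minorant $\gf(x)\geq \ell\Vert x\Vert^p+\ell_0$ and then evaluating the envelope at the single point $\ov{x}=0$ (splitting into the cases $\ell\geq 0$ and $\ell<0$ to pick an admissible $\gamma$), whereas you prove $(c)\Rightarrow(b)$ by hand — combining the liminf bound outside a ball of radius $R$ with the Weierstrass-type lower bound for a proper lsc function on the compact ball — and then add the extra implication $(b)\Rightarrow(a)$ using the $2^{p-1}$ inequality a second time to absorb $\Vert y\Vert^p$ into $\Vert y-x\Vert^p$ for sufficiently small $\gamma$. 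Your route is more self-contained (it essentially reproves the content of the cited exercise in the $p$-th order setting rather than assuming it transfers from the quadratic case) and it makes explicit that prox-boundedness can be verified at \emph{every} base point $x$ for small $\gamma$, not just at the origin; the cost is one additional implication in the chain. Your observation that lower semicontinuity is only genuinely needed in the step $(c)\Rightarrow(b)$ is accurate and is not made explicit in the paper.
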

 \begin{proof}
 $\ref{lemma:charac:sprox:a}\Rightarrow \ref{lemma:charac:sprox:b}$ Since $\gf$ is high-order prox-bounded, there exist $\gamma>0$, $\ov{x}\in \R^n$, and $\ell_0\in \R$ such that
  \begin{align*}
  \gf(y)+\frac{2^{p-1}}{p\gamma}\left(\Vert \ov{x}\Vert^p+\Vert y\Vert^p\right)\geq  \gf(y)+\frac{1}{p\gamma}\Vert \ov{x}- y\Vert^p\geq \ell_0, \quad \forall y\in \R^n,
  \end{align*}
where the first inequality follows from Fact~\ref{lemma:ineq:inequality p}~\ref{lemma:ineq:inequality p:ineq4}. This implies that
  \[\gf(y)+\frac{2^{p-1}}{p\gamma}\Vert y\Vert^p\geq \ell_0 -\frac{2^{p-1}}{p\gamma}\Vert \ov{x}\Vert^p,\]
  for all $y\in \R^n$.
  Letting $\ell:=\frac{2^{p-1}}{p\gamma}$, we obtain the desired result.
  \\
  $\ref{lemma:charac:sprox:b}\Rightarrow \ref{lemma:charac:sprox:c}$ Since there exist $\ell, \ell_0\in\R$ such that $\gf(x)+\ell\Vert x\Vert^p\geq \ell_0$ for every $x\in\R^n$, dividing both sides by $\Vert x\Vert^p$ and taking the limit as $\Vert x\Vert\to \infty$ confirms the validity of the claim.
\\
$\ref{lemma:charac:sprox:c}\Rightarrow \ref{lemma:charac:sprox:a}$ From \cite[Exercise~1.14]{Rockafellar09}, there exist $\ell, \ell_0\in \R$ such that for each $x\in \R^n$,
$\gf(x)\geq \ell \Vert x\Vert^p+\ell_0$.
    Thus, for $\ov{x}=0$,
    \[
    \gf(x)+\frac{1}{p\gamma}\Vert x-\ov{x}\Vert^p= \gf(x)+\frac{1}{p\gamma}\Vert x\Vert^p\geq \left(\frac{1}{p\gamma}+\ell\right) \Vert x\Vert^p+\ell_0,\quad \forall x\in \R^n.
    \]
    If $\ell\geq 0$, then for any $\gamma>0$, $\fgam{\gf}{p}{\gamma}(\ov{x})>-\infty$. If $\ell<0$, then for any $\gamma\in \left(0, -\frac{1}{p\ell}\right)$, $\fgam{\gf}{p}{\gamma}(\ov{x})>-\infty$. In both cases, there exists some $\gamma>0$ such that $\fgam{\gf}{p}{\gamma}(\ov{x})>-\infty$  for $\ov{x}$. Therefore, $\gf$ is high-order prox-bounded.
\end{proof}
%%%%%%%%%%%%%%%%%%%%%%%%%%%%%%%%%%%%%%%%%%%%%%%%%%%%%%%%%%%%%%%%%%%%%%%%%%%%%%%%%%%
%%%%%%%%%%%%%%%%%%%%%%%%%%%%%%%%%%%%%%%%%%%%%%%%%%%%%%%%%%%%%%%%%%%%%%%%%%%%%%%%%%%
Note that convex functions and also lower-bounded functions are evidently high-order prox-bounded with $\gamma^{\gf, p} = +\infty$.
%%%%%%%%%%%%%%%%%%%%%%%%%%%%%%%%%%%%%%%%%%%%%%%%%%%%%%%%%%%%%%%%%%%%%%%%%%%%%%%%%%%
%%%%%%%%%%%%%%%%%%%%%%%%%%%%%%%%%%%%%%%%%%%%%%%%%%%%%%%%%%%%%%%%%%%%%%%%%%%%%%%%%%%

The following fact introduces the conditions ensuring the non-emptiness and outer semicontinuity of $\prox{\gf}{\gamma}{p}$, as well as the continuity of $\fgam{\gf}{p}{\gamma}$. Similar results for $p=2$ can be found in \cite[Theorem~1.25]{Rockafellar09}.

 \begin{fact}[Basic properties of HOME and HOPE]\cite{Kabgani24itsopt}\label{th:level-bound+locally uniform}
Let $p>1$ and $\gf: \R^n\to \Rinf$ be a proper lsc function that is high-order prox-bounded with a threshold $\gamma^{\gf, p}>0$. Then, for each $\gamma\in (0, \gamma^{\gf, p})$,
\begin{enumerate}[label=(\textbf{\alph*}), font=\normalfont\bfseries, leftmargin=0.7cm]
\item \label{level-bound+locally uniform:proxnonemp} $\prox{\gf}{\gamma}{p}(x)$ is nonempty and compact and $\fgam{\gf}{p}{\gamma}(x)$ is finite for every $x\in \R^n$;
\item \label{level-bound+locally uniform:cononx} $\fgam{\gf}{p}{\gamma}$ is continuous on $\R^n$;
 \item \label{level-bound+locally uniform2:con} $\fgam{\gf}{p}{\gamma}$ depends continuously on $(x,\gamma)$ in $\R^n\times (0, \gamma^{\gf, p})$;
\item \label{level-bound+locally uniform2:conv} if $y^k\in \prox{\gf}{\gamma_k}{p}(x^k)$, with $x^k\to \ov{x}$ and $\gamma_k\to \gamma\in (0, \gamma^{\gf, p})$, then the sequence $\{y^k\}_{k\in \mathbb{N}}$ is bounded. Furthermore, all cluster points of the sequence lie in $\prox{\gf}{\gamma}{p}(\ov{x})$.
     \end{enumerate}
 \end{fact}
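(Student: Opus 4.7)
The plan is to deduce all four assertions from a single parametric level-boundedness property of the integrand $F_\gamma(x,y):=\gf(y)+\frac{1}{p\gamma}\|x-y\|^p$: namely, that for every compact $K\subset\R^n$ and every compact interval $[\gamma_1,\gamma_2]\subset(0,\gamma^{\gf,p})$, the $y$-sublevel sets $\{y : F_\gamma(x,y)\leq\alpha\}$ are bounded uniformly in $(x,\gamma,\alpha)\in K\times[\gamma_1,\gamma_2]\times(-\infty,\alpha_0]$. This is exactly the hypothesis needed to invoke the standard variational-analytic machinery (Theorem~1.17 of \cite{Rockafellar09}), which then delivers all four claims in one stroke.

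The first and main step is to establish this uniform level-boundedness. Fixing $\gamma\in(0,\gamma^{\gf,p})$ and choosing an intermediate $\gamma^\star\in(\gamma,\gamma^{\gf,p})$, I would first show via Proposition~\ref{lemma:charac:sprox} that there exist $\ell<\frac{1}{p\gamma}$ and $\ell_0\in\R$ with $\gf(y)\geq-\ell\|y\|^p-\ell_0$ for every $y$; combined with the lower bound $\|x-y\|^p\geq\lambda^{p-1}\|y\|^p-(\lambda/(1-\lambda))^{p-1}\|x\|^p$ of Fact~\ref{lemma:ineq:inequality p}\ref{lemma:ineq:inequality p:ineq1} and a $\lambda\in(0,1)$ with $\lambda^{p-1}/(p\gamma)>\ell$, this produces
\[
F_\gamma(x,y)\;\geq\;\Bigl(\tfrac{\lambda^{p-1}}{p\gamma}-\ell\Bigr)\|y\|^p - C(\gamma)\|x\|^p - \ell_0,
\]
so the coercivity constant in $y$ is bounded below uniformly as $(x,\gamma)$ varies over $K\times[\gamma_1,\gamma_2]$. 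The delicate point is the quantitative link between $\ell$ and $\gamma^{\gf,p}$: one needs the sharper asymptotic estimate $\liminf_{\|y\|\to\infty}\gf(y)/\|y\|^p\geq-1/(p\gamma^{\gf,p})$, which follows from the very definition of the threshold together with $\|\tilde x-y\|^p/\|y\|^p\to 1$, rather than the crude $2^{p-1}$-splitting of Fact~\ref{lemma:ineq:inequality p}\ref{lemma:ineq:inequality p:ineq4}, to guarantee $\ell<\frac{1}{p\gamma}$ for \emph{every} $\gamma<\gamma^{\gf,p}$ rather than only $\gamma<\gamma^{\gf,p}/2^{p-1}$.

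With uniform level-boundedness in hand, each claim is routine. For \ref{level-bound+locally uniform:proxnonemp}, minimizing sequences of $F_\gamma(x,\cdot)$ lie in a bounded set, and lsc of $\gf$ guarantees the infimum is attained and the argmin closed, yielding a compact nonempty $\prox{\gf}{\gamma}{p}(x)$ and a finite $\fgam{\gf}{p}{\gamma}(x)$. For \ref{level-bound+locally uniform:cononx}, upper semicontinuity of $\fgam{\gf}{p}{\gamma}$ is immediate from the infimum of a family of functions continuous in $x$, whereas lower semicontinuity follows by picking $x^k\to\ov x$ with $y^k\in\prox{\gf}{\gamma}{p}(x^k)$, extracting a cluster point $\ov y$ via uniform boundedness, and invoking lsc of $\gf$ to obtain $\fgam{\gf}{p}{\gamma}(\ov x)\leq\gf(\ov y)+\frac{1}{p\gamma}\|\ov x-\ov y\|^p\leq\liminf_k\fgam{\gf}{p}{\gamma}(x^k)$. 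Assertion~\ref{level-bound+locally uniform2:con} follows by running the same argument jointly in $(x,\gamma)$ over a compact neighborhood in $\R^n\times(0,\gamma^{\gf,p})$, and \ref{level-bound+locally uniform2:conv} is then an immediate consequence of uniform level-boundedness together with lsc of the integrand. The principal obstacle throughout is the sharp coercivity estimate in the second paragraph; once that is in place, everything else is a mechanical Berge-type continuity argument.
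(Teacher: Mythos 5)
The paper does not prove this statement at all: it is imported verbatim as a ``Fact'' from \cite{Kabgani24itsopt}, with the remark that the $p=2$ analogue is \cite[Theorem~1.25]{Rockafellar09}. So there is no in-paper proof to compare against; what you have written is a reconstruction, and it is a correct one following exactly the route one would expect (and that underlies the cited $p=2$ result), namely uniform parametric level-boundedness of $(x,\gamma,y)\mapsto\gf(y)+\tfrac{1}{p\gamma}\Vert x-y\Vert^p$ followed by the Berge/Rockafellar--Wets parametric minimization theorem. You correctly isolated the only genuinely delicate point: the crude splitting $\Vert x-y\Vert^p\le 2^{p-1}(\Vert x\Vert^p+\Vert y\Vert^p)$ used in Proposition~\ref{lemma:charac:sprox} only yields coercivity of the regularized objective for $\gamma$ below a fraction of the threshold, whereas the sharp bound $\liminf_{\Vert y\Vert\to\infty}\gf(y)/\Vert y\Vert^p\ge -1/(p\gamma^{\gf,p})$ (obtained from the definition of the threshold via $\Vert \tilde x-y\Vert^p/\Vert y\Vert^p\to 1$) is what covers the full range $\gamma\in(0,\gamma^{\gf,p})$; your choice of $\ell\in\bigl(\tfrac{1}{p\gamma^{\gf,p}},\tfrac{1}{p\gamma}\bigr)$ and $\lambda$ close to $1$ then makes the coefficient of $\Vert y\Vert^p$ positive, uniformly over compact sets of $(x,\gamma)$. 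The remaining deductions (Weierstrass for \ref{level-bound+locally uniform:proxnonemp}, upper semicontinuity from the infimum plus lower semicontinuity from the cluster-point argument for \ref{level-bound+locally uniform:cononx} and \ref{level-bound+locally uniform2:con}, and the bounded-cluster-point argument for \ref{level-bound+locally uniform2:conv}) are all sound.
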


%%%%%%%%%%%%%%%%%%%%%%%%%%%%%%%%%%%%%%%%%%%%%%%%%%%%%%%

%%%%%%%%%%%%%%%%%%%%%%%%%%%%%%%%%%%%%%%%%%%%%%%%%%%%%%%
Coercivity, which we study next for $\fgam{\gf}{p}{\gamma}$, is an essential condition in many numerical algorithms, as it provides a sufficient condition for the existence of a global minimum point under certain conditions.
\begin{proposition}[Coercivity]\label{lem:hiordermor:coer} 
Let $p> 1$ and $\gf: \R^n \to \Rinf$ be a proper and coercive function.
Then, for any $\gamma>0$, the following statements hold:
\begin{enumerate}[label=(\textbf{\alph*}), font=\normalfont\bfseries, leftmargin=0.7cm]
  \item\label{hiordermor:coer:coer}  $\fgam{\gf}{p}{\gamma}$ is coercive;
  \item \label{hiordermor:coer:bound} for each $\lambda\in \R$,  the sublevel set $\mathcal{L}(\fgam{\gf}{p}{\gamma}, \lambda)$ is bounded.
\end{enumerate}
\end{proposition}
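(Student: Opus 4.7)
The plan is to dispatch (b) as an immediate corollary of (a), since any function tending to $+\infty$ at infinity automatically has bounded sublevel sets. So all of the real work lies in part (a).

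For (a), I would proceed by contradiction. Suppose $\fgam{\gf}{p}{\gamma}$ fails to be coercive. Then there exist $M \in \R$ and a sequence $\{x^k\}_{k\in\mathbb{N}}$ with $\Vert x^k\Vert \to +\infty$ and $\fgam{\gf}{p}{\gamma}(x^k)\leq M$. Using the definition of the infimum, pick near-minimizers $y^k\in\R^n$ satisfying
\[
\gf(y^k) + \frac{1}{p\gamma}\Vert x^k - y^k\Vert^p \leq M + 1.
\]
I would then split according to the behavior of $\{y^k\}$. If $\{y^k\}$ is unbounded, extract a subsequence with $\Vert y^k\Vert\to+\infty$; coercivity of $\gf$ forces $\gf(y^k)\to+\infty$, while the nonnegative regularizer can only help, so the left-hand side diverges, contradicting the uniform bound $M+1$. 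If instead $\{y^k\}$ is bounded, the reverse triangle inequality gives $\Vert x^k-y^k\Vert \geq \Vert x^k\Vert - \Vert y^k\Vert \to +\infty$, so $\tfrac{1}{p\gamma}\Vert x^k - y^k\Vert^p\to+\infty$. Combining this with a lower bound on $\gf(y^k)$ (the iterates lie in a bounded set on which $\gf$ is bounded below) yields the same divergence and hence the same contradiction.

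The only subtle point is securing that lower bound in the bounded case. It follows from the fact that a proper lsc coercive function on $\R^n$ is bounded below, its sublevel sets being compact; since the surrounding framework uses proper lsc $\gf$ throughout—and a non-lsc $\gf$ unbounded below on some ball would render $\fgam{\gf}{p}{\gamma}\equiv-\infty$, making the statement vacuous—this is harmless. With both cases contradicted, (a) is established and (b) follows from the opening observation.
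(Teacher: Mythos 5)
Your proof is correct and follows essentially the same route as the paper's: argue by contradiction, pick near-minimizers $y^k$ for the infimum defining $\fgam{\gf}{p}{\gamma}(x^k)$, use coercivity of $\gf$ to force $\{y^k\}$ to be bounded, and then derive a contradiction from $\Vert x^k - y^k\Vert \to \infty$, with (b) following immediately. You are in fact slightly more careful than the paper on the one delicate point: the paper invokes its $\infty-\infty=\infty$ convention where a lower bound on $\gf(y^k)$ is what is actually needed, and your explicit appeal to lower semicontinuity (omitted from the statement but assumed throughout the surrounding results, and genuinely necessary, since a proper coercive $\gf$ unbounded below on a ball makes $\fgam{\gf}{p}{\gamma}\equiv-\infty$) closes that gap cleanly.
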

\begin{proof}
$\ref{hiordermor:coer:coer}$
Fix $\varepsilon>0$. By the definition of $\fgam{\gf}{p}{\gamma}$, for any $x\in\R^n$,  there exists $y^\varepsilon_x\in\R^n$ such that
\begin{equation}\label{eq:hiordermor:coer}
\fgam{\gf}{p}{\gamma}(x)\leq \gf(y^\varepsilon_x)+\frac{1}{p\gamma}\Vert x-y^\varepsilon_x\Vert^p\leq \fgam{\gf}{p}{\gamma}(x)+\varepsilon.
\end{equation}
By contradiction, suppose that $\fgam{\gf}{p}{\gamma}$ is not coercive. 
Then, there exists a sequence $\{x^k\}_{k\in \mathbb{N}} \subseteq \R^{n}$ with $\Vert x^k\Vert \rightarrow \infty$ as $k\to \infty$ and $\bs\lim_{k\to \infty}\fgam{\gf}{p}{\gamma}(x^k)<\infty$. Let $\{y^\varepsilon_{x^k}\}_{k\in \mathbb{N}}$ be a corresponding sequence to $\{x^k\}_{k\in \mathbb{N}}$ such that each $y^\varepsilon_{x^k}$ satisfies \eqref{eq:hiordermor:coer}. Since $\varepsilon$ is fixed and, by convention $\infty - \infty = \infty$, relation \eqref{eq:hiordermor:coer} implies that 
$\bs\lim_{k\to \infty}\gf(y^\varepsilon_{x^k})<\infty$ and $\bs\lim_{k\to  \infty}\Vert x^k-y^\varepsilon_{x^k}\Vert^p<\infty$.
From the coercivity of $\gf$ and $\bs\lim_{k\to \infty}\gf(y^\varepsilon_{x^k})<\infty$, we deduce that 
$\bs\lim_{k\to \infty}\Vert y^\varepsilon_{x^k}\Vert< \infty$. 
By Fact~\ref{lemma:ineq:inequality p}~\ref{lemma:ineq:inequality p:ineq1}, we have
\[\Vert x^k\Vert^p\leq 2^{p-1}\left(\Vert x^k -y^\varepsilon_{x^k}\Vert^p+\Vert y^\varepsilon_{x^k}\Vert^p\right).\]
Since $\Vert y^\varepsilon_{x^k}\Vert$ remains bounded while $\Vert x^k\Vert\to \infty$, the above inequality implies
$\bs\lim_{k \to\infty}\Vert x^k -y^\varepsilon_{x^k}\Vert=\infty$, which is a contradiction.
\\
$\ref{hiordermor:coer:bound}$ This follows directly from Assertion~$\ref{hiordermor:coer:coer}$ and \cite[Proposition~11.12]{Bauschke17}.
\end{proof}
  %%%%%%%%%%%%%%%%%%%%%%%%%%%%%%%%%%%%%%%%%%%%%%%%%%%%%%%
%%%%%%%%%%%%%%%%%%%%%%%%%%%%%%%%%%%%%%%%%%%%%%%%%%%%%%%
The coercivity of the original function $\gf$ ensures the boundedness of its sublevel sets, which contain the global minimizers if any exist. To compute a minimizer, many algorithms generate a decreasing sequence $\{\gf(x^k)\}_{k\in \Nz} \subseteq \mathcal{L}(\gf, \gf(x^0))$ starting from an initial point $x^0$.
On the other hand, properties such as differentiability, Lipschitz continuity, and convexity are only guaranteed locally. 
Thus, for a given $r>0$, we consider an initial point $x^0$ such that
$\mathcal{L}(\gf, \gf(x^0))\subseteq \mb(0; r)$, and assume that the desired properties hold within $\mb(0; r)$.
For simplicity, we reference the ball centered at the origin, but this can be generalized by replacing $\mb(0; r)$ with $\mb(\ov{x}; r)$ for a reference point $\ov{x}\in \R^n$.
Since our analysis focuses on the function $\fgam{\gf}{p}{\gamma}$ instead of $\gf$, and because an explicit form of 
$\fgam{\gf}{p}{\gamma}$ is 
often unavailable for general functions, even though closed forms exist for some important cases when $p=2$, see, e.g., \cite{beck2017first}, it is crucial to clarify the relationship between the sublevel sets of the original function and those of HOME. Understanding this relationship allows us to select appropriate values for $r$ and $\gamma$ such that $\mathcal{L}(\gf, \gf(x^0))\subseteq \mb(0; r)$ implies $\mathcal{L}(\fgam{\gf}{p}{\gamma}, \fgam{\gf}{p}{\gamma}(x^0))\subseteq \mb(0; r)$.
The subsequent result establishes this relationship, which, surprisingly, appears to have been overlooked in the existing literature, even for $p=2$.

   %%%%%%%%%%%%%%%%%%%%%%%%%%%%%%%%%%%%%%%%%%%%%%%%%%%%%%%
%%%%%%%%%%%%%%%%%%%%%%%%%%%%%%%%%%%%%%%%%%%%%%%%%%%%%%%
 \begin{proposition}[Sublevel sets of $\gf$ and $\fgam{\gf}{p}{\gamma}$]\label{pro:rel:sublevel}
 Let $p> 1$ and $\gf: \R^n\to \Rinf$ be a proper lsc function that is coercive. If for some $r>0$ and $\lambda\in \R$, we have $\mathcal{L}(\gf, \lambda)\subseteq \mb(0; r)$, then there exists a $\widehat{\gamma}>0$ such that for each $\gamma\in (0, \widehat{\gamma}]$, we have $\mathcal{L}(\fgam{\gf}{p}{\gamma}, \lambda)\subseteq \mb(0; r)$.
 \end{proposition}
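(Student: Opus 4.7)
The plan is to translate membership in $\mathcal{L}(\fgam{\gf}{p}{\gamma},\lambda)$ back to $\mathcal{L}(\gf,\lambda)$ via a proximal point, and then to absorb the resulting displacement by exploiting a compactness-driven strict gap between $\mathcal{L}(\gf,\lambda)$ and the boundary of $\mb(0;r)$.

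First I would observe that, since $\gf$ is proper, lsc, and coercive, it attains its infimum, so $m:=\bs{\inf}_{\R^n}\gf\in\R$ and $\gf$ is bounded below on $\R^n$. By Proposition~\ref{lemma:charac:sprox}, $\gf$ is high-order prox-bounded with threshold $\gamma^{\gf,p}=+\infty$, so Fact~\ref{th:level-bound+locally uniform}~\ref{level-bound+locally uniform:proxnonemp} ensures that $\prox{\gf}{\gamma}{p}(x)\neq\emptyset$ and $\fgam{\gf}{p}{\gamma}(x)\in\R$ for every $\gamma>0$ and $x\in\R^n$. Next I would fix $\gamma>0$ and $x\in\mathcal{L}(\fgam{\gf}{p}{\gamma},\lambda)$, and pick any $y_x\in\prox{\gf}{\gamma}{p}(x)$. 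The identity
\[
\gf(y_x)+\frac{1}{p\gamma}\Vert x-y_x\Vert^p=\fgam{\gf}{p}{\gamma}(x)\leq\lambda
\]
immediately gives $\gf(y_x)\leq\lambda$, i.e.\ $y_x\in\mathcal{L}(\gf,\lambda)\subseteq\mb(0;r)$, and rearranging produces the displacement bound
\[
\Vert x-y_x\Vert\leq\bigl(p\gamma(\lambda-\gf(y_x))\bigr)^{1/p}\leq\bigl(p\gamma(\lambda-m)\bigr)^{1/p}.
\]

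If $\lambda\leq m$, the right-hand side is nonpositive, forcing $x=y_x\in\mb(0;r)$, and any $\widehat{\gamma}>0$ works. For the main case $\lambda>m$, I would use that $\mathcal{L}(\gf,\lambda)$ is closed (by lsc of $\gf$) and bounded (by coercivity), hence compact; because it is contained in the \emph{open} ball $\mb(0;r)$, continuity of $\Vert\cdot\Vert$ yields $r^{*}:=\bs{\max}_{y\in\mathcal{L}(\gf,\lambda)}\Vert y\Vert<r$. Therefore $\Vert y_x\Vert\leq r^{*}$ and, by the triangle inequality,
\[
\Vert x\Vert\leq r^{*}+\bigl(p\gamma(\lambda-m)\bigr)^{1/p}.
\]

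Finally I would set $\widehat{\gamma}:=\dfrac{(r-r^{*})^p}{2p(\lambda-m)}>0$. Then for every $\gamma\in(0,\widehat{\gamma}]$ the second term above is at most $(r-r^{*})/2^{1/p}<r-r^{*}$, yielding $\Vert x\Vert<r$, i.e.\ $x\in\mb(0;r)$, which completes the argument. The main subtlety is preserving the \emph{strict} inequality dictated by the open-ball convention for $\mb(0;r)$: a pointwise estimate alone only gives $\Vert x\Vert<r+o(1)$ as $\gamma\downarrow 0$. This is precisely where compactness of $\mathcal{L}(\gf,\lambda)$ inside $\mb(0;r)$ enters to produce the strict gap $r^{*}<r$, after which the choice of $\widehat{\gamma}$ is a routine calibration.
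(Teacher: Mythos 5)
Your proof is correct, and it takes a genuinely different route from the paper's. The paper argues by contradiction: assuming points $x^\gamma$ with $\Vert x^\gamma\Vert\geq r$ and $\fgam{\gf}{p}{\gamma}(x^\gamma)\leq\lambda$ exist for every $\gamma$, it extracts a convergent subsequence along $\gamma_k\downarrow 0$ (using coercivity of $\fgam{\gf}{p}{\gamma_1}$ and the monotonicity in $\gamma$ from Fact~\ref{fact:horder:Bauschke17:p12.9}), invokes the external convergence result $\fgam{\gf}{p}{\gamma_j}(x^{\gamma_j})\to\gf(\widehat{x})$ as $\gamma_j\downarrow 0$ to conclude $\gf(\widehat{x})\leq\lambda$, and derives the contradiction $\Vert\widehat{x}\Vert<r$; the extension to all $\gamma\in(0,\widehat{\gamma}]$ again uses monotonicity. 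Your argument is instead direct and constructive: you pass through a proximal point $y_x\in\prox{\gf}{\gamma}{p}(x)$ (legitimately nonempty, since coercivity gives boundedness below, hence high-order prox-boundedness with infinite threshold via Proposition~\ref{lemma:charac:sprox} and Fact~\ref{th:level-bound+locally uniform}), land in $\mathcal{L}(\gf,\lambda)$, and absorb the displacement $\bigl(p\gamma(\lambda-m)\bigr)^{1/p}$ into the compactness gap $r-r^{*}>0$. What your approach buys is an explicit, quantitative threshold $\widehat{\gamma}=(r-r^{*})^p/\bigl(2p(\lambda-m)\bigr)$ and independence from the epi-convergence machinery cited by the paper; what the paper's approach buys is brevity given that machinery, at the cost of being non-constructive. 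One cosmetic point: in the branch $\lambda<m$ the expression $\bigl(p\gamma(\lambda-m)\bigr)^{1/p}$ is not literally defined (a negative number under a fractional power); it is cleaner to say that $\tfrac{1}{p\gamma}\Vert x-y_x\Vert^p\leq\lambda-\gf(y_x)\leq\lambda-m<0$ is infeasible, so $\mathcal{L}(\fgam{\gf}{p}{\gamma},\lambda)=\emptyset$, while $\lambda=m$ forces $x=y_x$. This does not affect the validity of the argument.
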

\begin{proof}
First, we establish the existence of a $\widehat{\gamma}>0$ guaranteeing $\mathcal{L}(\fgam{\gf}{p}{\widehat{\gamma}}, \lambda)\subseteq \mb(0; r)$.
By contradiction, let us assume that for any $\gamma>0$, there exists $x^\gamma$ such that $\Vert x^\gamma\Vert\geq r$ and 
$\fgam{\gf}{p}{\gamma}(x^\gamma)\leq \lambda$. Consider a decreasing sequence $\gamma_{k}\downarrow 0$ and the corresponding sequence $\{x^{\gamma_{k}}\}_{k\in \mathbb{N}}$. 
From Fact~\ref{fact:horder:Bauschke17:p12.9}, we know that for each $k\in \mathbb{N}$,
$\fgam{\gf}{p}{\gamma_1}(x^{\gamma_{k}})\leq \fgam{\gf}{p}{\gamma_{k}}(x^{\gamma_{k}})\leq \lambda$. 
The coercivity of $\fgam{\gf}{p}{\gamma_1}$ (see Proposition~\ref{lem:hiordermor:coer}) implies that 
$\Vert x^{\gamma_{k}}\Vert\nrightarrow \infty$ as $k\to \infty$. 
Hence, there exists an infinite subset $J\subseteq \mathbb{N}$ and convergent subsequence $\{x^{\gamma_j}\}_{j\in J}$ with a limiting point $\widehat{x}$ such that $x^{\gamma_j}\to \widehat{x}$ as $j\to\infty$, which implies $\Vert \widehat{x}\Vert\geq r$. 
On the contrary, by Theorem~\ref{th:level-bound+locally uniform}~$\ref{level-bound+locally uniform2:con}$ and \cite[Theorem~3.1~(c)]{KecisThibault15}, we have $\gf(\widehat{x})=\bs\lim_{j\to \infty}\fgam{\gf}{p}{\gamma_j}(x^{\gamma_j})\leq \lambda$, which in turn implies $\Vert \widehat{x}\Vert<r$, which is a clear contradiction. 
Finally, since for any $\gamma\in (0, \widehat{\gamma}]$, $\fgam{\gf}{p}{\widehat{\gamma}}\leq \fgam{\gf}{p}{\gamma}$ (see Fact~\ref{fact:horder:Bauschke17:p12.9}), we have
$\mathcal{L}(\fgam{\gf}{p}{\gamma}, \lambda)\subseteq\mathcal{L}(\fgam{\gf}{p}{\widehat{\gamma}}, \lambda)$, and the proof is completed.
\end{proof}
  %%%%%%%%%%%%%%%%%%%%%%%%%%%%%%%%%%%%%%%%%%%%%%%%%%%%%%%
%%%%%%%%%%%%%%%%%%%%%%%%%%%%%%%%%%%%%%%%%%%%%%%%%%%%%%%
The next result is useful for handling the sublevel sets of HOME in the presence of errors, which arise when computing inexact elements of HOPE; see \cite[Remark~29]{Kabganitechadaptive}.

\begin{corollary} \label{cor:rel:sublevel}
Let $p> 1$, $x^0\in\R^n$, and $\gf: \R^n\to \Rinf$ be a proper lsc function that is coercive. 
If for some $r>0$ and $\lambda\in \R$, we have
\[
\mathcal{S}_1:=\left\{x\in \R^n \mid \gf(x)\leq \gf(x^0)+\lambda\right\}\subseteq \mb(0; r),
\]
then there exists some $\widehat{\gamma}>0$ such that for each $\gamma\in (0, \widehat{\gamma}]$,
\[
\mathcal{S}_2:=\left\{x\in \R^n \mid \fgam{\gf}{p}{\gamma}(x)\leq \fgam{\gf}{p}{\gamma}(x^0)+\lambda\right\}
\subseteq \mb(0; r).
\]
\end{corollary}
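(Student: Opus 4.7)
The plan is to reduce the corollary to Proposition~\ref{pro:rel:sublevel} by absorbing the $\gf(x^0)$ offset into the sublevel height. The key observation is that although $\fgam{\gf}{p}{\gamma}(x^0)$ depends on $\gamma$ and is a priori hard to control from below, it is always bounded \emph{above} by $\gf(x^0)$ via Fact~\ref{fact:horder:Bauschke17:p12.9}. This one-sided bound is already enough, because we only need a sublevel-set containment in one direction.

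First I would set $\lambda' := \gf(x^0) + \lambda$ and rewrite the hypothesis as $\mathcal{L}(\gf, \lambda') = \mathcal{S}_1 \subseteq \mb(0; r)$. Since $\gf$ is proper, lsc and coercive, Proposition~\ref{pro:rel:sublevel} applies and yields some $\widehat{\gamma} > 0$ such that
\[
\mathcal{L}(\fgam{\gf}{p}{\gamma}, \lambda') \subseteq \mb(0; r), \qquad \forall \gamma \in (0, \widehat{\gamma}].
\]

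Next I would invoke Fact~\ref{fact:horder:Bauschke17:p12.9} at the point $x^0$ to get $\fgam{\gf}{p}{\gamma}(x^0) \leq \gf(x^0)$, and therefore $\fgam{\gf}{p}{\gamma}(x^0) + \lambda \leq \lambda'$. Consequently, for every $\gamma \in (0, \widehat{\gamma}]$,
\[
\mathcal{S}_2 = \{x \in \R^n \mid \fgam{\gf}{p}{\gamma}(x) \leq \fgam{\gf}{p}{\gamma}(x^0) + \lambda\} \subseteq \mathcal{L}(\fgam{\gf}{p}{\gamma}, \lambda') \subseteq \mb(0; r),
\]
which is the conclusion. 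There is no real obstacle here; the corollary is essentially a bookkeeping statement about how an additive offset propagates from $\gf$ to its high-order Moreau envelope, and the monotonicity $\fgam{\gf}{p}{\gamma} \leq \gf$ does all the work on top of Proposition~\ref{pro:rel:sublevel}.
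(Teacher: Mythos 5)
Your proposal is correct and is essentially identical to the paper's own proof: the set $\mathcal{L}(\fgam{\gf}{p}{\gamma}, \gf(x^0)+\lambda)$ you construct is exactly the intermediate set $\mathcal{S}_3$ in the paper, obtained via Proposition~\ref{pro:rel:sublevel}, and the inclusion $\mathcal{S}_2\subseteq\mathcal{S}_3$ follows from $\fgam{\gf}{p}{\gamma}(x^0)\leq\gf(x^0)$ (Fact~\ref{fact:horder:Bauschke17:p12.9}) in both arguments.
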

\begin{proof}
From Proposition~\ref{pro:rel:sublevel}, there exists some $\widehat{\gamma}>0$ such that for each $\gamma\in (0, \widehat{\gamma}]$, 
\begin{equation}\label{eq:cor:rel:sublevel:a}
\mathcal{S}_3:=\left\{x\in \R^n \mid \fgam{\gf}{p}{\gamma}(x)\leq \gf(x^0)+\lambda\right\}\subseteq \mb(0; r).
\end{equation}
Moreover, invoking Fact~\ref{fact:horder:Bauschke17:p12.9}, for each $\gamma>0$  it holds that $\fgam{\gf}{p}{\gamma}(x^0)\leq \gf(x^0)$, implying $\mathcal{S}_2\subseteq \mathcal{S}_3\subseteq \mb(0; r)$.
\end{proof}

\begin{remark}
 Considering the notation in Corollary~\ref{cor:rel:sublevel}, for certain important classes of functions, such as weakly convex functions, it is possible to find some $\ov{\gamma}>0$ such that, for any  $r> \gf(x_0)+\lambda $ and  
 $\gamma\in (0, \bs\min\{\ov{\gamma}, \widehat{\gamma}\}]$, we have $\mathcal{S}_2\subseteq \mb(0; r)$ and $\fgam{\gf}{p}{\gamma}$ is differentiable on $\mathcal{S}_2$. This result, discussed in more detail in \cite{Kabganitechadaptive}, provides the groundwork for developing gradient methods based on the gradient of $\fgam{\gf}{p}{\gamma}$. 
\end{remark}

%%%%%%%%%%%%%%%%%%%%%%%%%%%%%

By Fact~\ref{fact:horder:Bauschke17:p12.9}, for any $\gamma>0$, $\mathcal{L}(\gf, \lambda)\subseteq \mathcal{L}(\fgam{\gf}{p}{\gamma}, \lambda)$.
However, the gap between the graphs of $\gf$ and $\fgam{\gf}{p}{\gamma}$ explains why the reverse inclusion cannot be expected. The following example shows that even under convexity, 
the conclusion of Proposition~\ref{pro:rel:sublevel} does not necessarily hold for all $\gamma>0$.
\begin{example}\label{ex:sublvelinc}
Let $p=2$ and let $\gf: \R\to\R$  be defined by $\gf(x)=\vert x-2\vert$. Setting $r=1.35$ and $\ov{x}=2$, subfigure~\ref{fig:ex:sublvelinc}~(a) illustrates  that with $\gamma=1$ and $\lambda=1$, although $\mathcal{L}(\gf, \lambda)\subseteq \mb(\ov{x}; r)$,  it holds that $\mathcal{L}(\fgam{\gf}{p}{\gamma}, \lambda)\nsubseteq \mb(\ov{x}; r)$. However, by decreasing $\gamma$ as shown in subfigure~\ref{fig:ex:sublvelinc}~(b), the result of Proposition~\ref{pro:rel:sublevel} is satisfied.
\end{example}
\begin{figure}[ht]
        \subfloat[$\gamma=1$ and $p=2$]
        {\includegraphics[width=7.3cm]{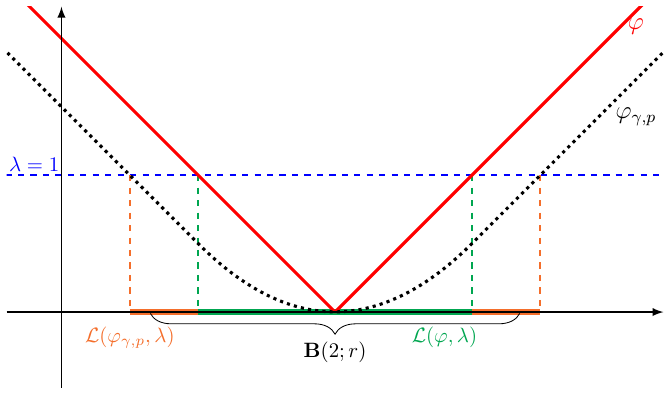}}\qquad\qquad
        \subfloat[$\gamma=0.4$ and $p=2$]
        {\includegraphics[width= 7.3cm]{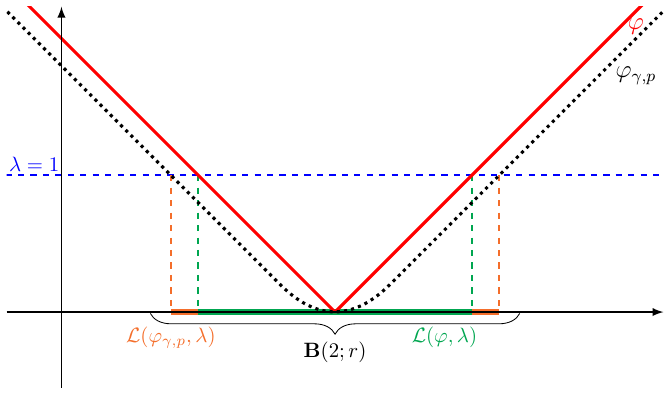}}
        \vspace{2mm}
        \caption{
        Graphs of $\gf$ and $\fgam{{\gf}}{p}{\gamma}$ with different $\gamma$ in Example~\ref{ex:sublvelinc} and the relationship between sublevel sets.
         \label{fig:ex:sublvelinc}}
    \end{figure}

\subsection{{\bf First-order criticality}} 
\label{subsec:critical}
In \cite{Kabgani24itsopt}, the relationships among the reference points of  $\gf$ and $ \fgam{\gf}{p}{\gamma}$ have been discussed. In the following, we recall these notions and discuss their relationships with 
\textit{$p$-calm points}.

\begin{definition}[Reference points]\label{def:critic}
Let $p>1$ and $\gh: \R^n \to \Rinf$ be a proper lsc function, and let $\ov{x}\in \dom{\gh}$. Then, $\ov{x}$ is called
\begin{enumerate}[label=(\textbf{\alph*}), font=\normalfont\bfseries, leftmargin=0.7cm]
  \item \label{def:critic:f} a \textit{Fr\'{e}chet critical point} if $0\in \widehat{\partial}\gh(\ov{x})$, denoted by $\ov{x}\in \bs{\rm Fcrit}(\gh)$;
 \item \label{def:critic:m} a \textit{Mordukhovich critical point} if $0\in \partial \gh(\ov{x})$, denoted by $\ov{x}\in \bs{\rm Mcrit}(\gh)$;
  \item \label{def:critic:min} a \textit{minimizer} of $\gh$ on $C\subseteq \mathbb{R}^n$ if $\gh(\ov{x})\leq \gh(x)$ for every $x\in C$, denoted by $\ov{x}\in \argmin{x\in C}\gh(x)$;
   \item \label{def:critic:pfix} a \textit{proximal fixed point} if $\ov{x}\in \prox{\gh}{\gamma}{p}(\ov{x})$, denoted by $\ov{x}\in \bs{\rm Fix}(\prox{\gh}{\gamma}{p})$;
    \item \label{def:critic:calm} a \textit{$p$-calm point} of $\gh$ with constant $M>0$ if
$\gh(x)+M\Vert x - \ov{x}\Vert^p>\gh(\ov{x})$, for each $x\in \R^n$ with $x\neq \ov{x}$.
\end{enumerate}
\end{definition}
While definitions~$\ref{def:critic:f}-\ref{def:critic:pfix}$ are known in the literature, the concept of $p$-calmness is inspired by \cite[Assumptions~4.1]{Poliquin96}. This notion is particularly helpful for establishing the differentiability of HOME in the vicinity of a $p$-calm point $\ov{x}$ (see Section~\ref{sec:on diff}).
%%%%%%%%%%%%%%%%%%%%%%%%%%%%%%%%%%%%%%%%%%%%%%%%%%%%%%%
%%%%%%%%%%%%%%%%%%%%%%%%%%%%%%%%%%%%%%%%%%%%%%%%%%%%%%%
In the subsequent results, we explore the relationships among the concepts introduced in 
Definition~\ref{def:critic}.
\begin{fact}[Relationships among reference points]\cite[Theorem~3.6]{Kabgani24itsopt}\label{prop:relcrit}
Let $p>1$ and $\gf: \R^n \to \Rinf$ be a proper lsc function and let $\ov{x}\in \dom{\gf}$. 
 If $\gf$ is high-order prox-bounded with a threshold $\gamma^{\gf, p}>0$  and $\bs\inf_{y\in\R^n} \gf(y)\neq -\infty$, then, for each $\gamma\in (0, \gamma^{\gf, p})$, we have
\begin{align*}
    \argmin{x\in \R^n}\gf(x)=\argmin{y\in \R^n} \fgam{\gf}{p}{\gamma}(y)&\subseteq\bs{\rm Fcrit}(\fgam{\gf}{p}{\gamma})\\&\subseteq \bs{\rm Mcrit}(\fgam{\gf}{p}{\gamma})\subseteq \bs{\rm Fix}(\prox{\gf}{\gamma}{p})\subseteq
\bs{\rm Fcrit}(\gf)\subseteq \bs{\rm Mcrit}(\gf).
\end{align*}
\end{fact}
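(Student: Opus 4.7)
The plan is to establish the chain of inclusions from left to right, combining the basic properties of HOPE and HOME in Fact~\ref{th:level-bound+locally uniform} with elementary subdifferential calculus. The main obstacle will be the middle inclusion $\bs{\rm Mcrit}(\fgam{\gf}{p}{\gamma}) \subseteq \bs{\rm Fix}(\prox{\gf}{\gamma}{p})$, for which I would first derive a pointwise characterization of the Fréchet subdifferential of HOME via the associated prox-selection.

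I would begin with the equality $\inf_{x}\gf(x) = \inf_{y}\fgam{\gf}{p}{\gamma}(y)$: the bound $\fgam{\gf}{p}{\gamma} \le \gf$ from Fact~\ref{fact:horder:Bauschke17:p12.9} gives one direction, while $\gf(z) + \frac{1}{p\gamma}\|z-y\|^p \ge \inf \gf$ for any $y, z$ gives the other. Thus, if $\ov{x} \in \argmin \gf$, then $\gf(\ov{x}) = \fgam{\gf}{p}{\gamma}(\ov{x})$ (attained at $y = \ov{x}$), placing $\ov{x} \in \argmin \fgam{\gf}{p}{\gamma}$; conversely, if $\ov{x} \in \argmin \fgam{\gf}{p}{\gamma}$, any $y^{\ov{x}} \in \prox{\gf}{\gamma}{p}(\ov{x})$ (nonempty by Fact~\ref{th:level-bound+locally uniform}) satisfies $\gf(y^{\ov{x}}) \le \fgam{\gf}{p}{\gamma}(\ov{x}) = \inf \gf$, whence $y^{\ov{x}} \in \argmin \gf$, and a short argument using the assumption $\inf \gf > -\infty$ yields $\ov{x} \in \argmin \gf$ as well. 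The inclusions $\argmin \fgam{\gf}{p}{\gamma} \subseteq \bs{\rm Fcrit}(\fgam{\gf}{p}{\gamma}) \subseteq \bs{\rm Mcrit}(\fgam{\gf}{p}{\gamma})$ are standard: minimizers are Fréchet critical, and $\widehat{\partial} \subseteq \partial$ by definition.

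The key step is $\bs{\rm Mcrit}(\fgam{\gf}{p}{\gamma}) \subseteq \bs{\rm Fix}(\prox{\gf}{\gamma}{p})$. I would first prove the auxiliary claim that for any $x \in \R^n$ and any $y \in \prox{\gf}{\gamma}{p}(x)$,
\[
\widehat{\partial}\fgam{\gf}{p}{\gamma}(x) \subseteq \bigl\{\tfrac{1}{\gamma}\|x-y\|^{p-2}(x-y)\bigr\}.
\]
Indeed, $\fgam{\gf}{p}{\gamma}(x') \le \gf(y) + \frac{1}{p\gamma}\|y-x'\|^p$ combined with a first-order expansion of $t \mapsto \frac{1}{p}\|y - t\|^p$ around $x$ (whose gradient is $\|x-y\|^{p-2}(x-y)$, also valid at $y=x$ via the convention $\tfrac{0}{0} = 0$) gives
\[
\fgam{\gf}{p}{\gamma}(x') - \fgam{\gf}{p}{\gamma}(x) \le \tfrac{1}{\gamma}\langle \|x-y\|^{p-2}(x-y),\, x' - x\rangle + o(\|x' - x\|).
\]
Matching this with the Fréchet lower bound supplied by any $\zeta \in \widehat{\partial}\fgam{\gf}{p}{\gamma}(x)$ and testing in arbitrary unit directions forces $\zeta = \tfrac{1}{\gamma}\|x-y\|^{p-2}(x-y)$. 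Now, if $0 \in \partial \fgam{\gf}{p}{\gamma}(\ov{x})$, then there exist $x^k \to \ov{x}$ and $\zeta^k \in \widehat{\partial}\fgam{\gf}{p}{\gamma}(x^k)$ with $\zeta^k \to 0$, the value-convergence condition being automatic by the continuity from Fact~\ref{th:level-bound+locally uniform}~\ref{level-bound+locally uniform:cononx}. Picking any $y^k \in \prox{\gf}{\gamma}{p}(x^k)$, the auxiliary claim yields $\|\zeta^k\| = \tfrac{1}{\gamma}\|x^k - y^k\|^{p-1} \to 0$, so $y^k \to \ov{x}$, and Fact~\ref{th:level-bound+locally uniform}~\ref{level-bound+locally uniform2:conv} delivers $\ov{x} \in \prox{\gf}{\gamma}{p}(\ov{x})$.

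For the remaining inclusions, if $\ov{x} \in \bs{\rm Fix}(\prox{\gf}{\gamma}{p})$, then $\ov{x}$ is a global minimizer of $y \mapsto \gf(y) + \frac{1}{p\gamma}\|y - \ov{x}\|^p$, so for all $y \in \R^n$ one has $\gf(y) - \gf(\ov{x}) \ge -\frac{1}{p\gamma}\|y - \ov{x}\|^p$; dividing by $\|y - \ov{x}\|$ and letting $y \to \ov{x}$ (using $p > 1$) gives $0 \in \widehat{\partial}\gf(\ov{x})$, i.e., $\ov{x} \in \bs{\rm Fcrit}(\gf)$. The final inclusion $\bs{\rm Fcrit}(\gf) \subseteq \bs{\rm Mcrit}(\gf)$ holds by definition. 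The only genuinely nontrivial ingredient is thus the Fréchet subdifferential characterization of HOME via any prox-selection, used in the middle step; everything else reduces to straightforward subdifferential bookkeeping.
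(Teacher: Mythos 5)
The paper does not actually prove this statement: it is imported verbatim as a Fact with the citation \cite[Theorem~3.6]{Kabgani24itsopt}, so there is no in-paper proof to compare against. Your argument is, however, a correct and essentially self-contained derivation, and each step checks out. The equality of the two $\argmin$ sets follows exactly as you say from $\bs\inf \gf = \bs\inf \fgam{\gf}{p}{\gamma}$ together with the nonemptiness of $\prox{\gf}{\gamma}{p}(\ov{x})$ for $\gamma\in(0,\gamma^{\gf,p})$ and the finiteness of $\bs\inf\gf$ (which forces the regularization term to vanish, hence $y^{\ov{x}}=\ov{x}$ --- it would be worth writing that one line out, since "$y^{\ov{x}}\in\argmin\gf$" alone does not yet give "$\ov{x}\in\argmin\gf$"). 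Your auxiliary claim that $\widehat{\partial}\fgam{\gf}{p}{\gamma}(x)\subseteq\bigl\{\tfrac{1}{\gamma}\Vert x-y\Vert^{p-2}(x-y)\bigr\}$ for any $y\in\prox{\gf}{\gamma}{p}(x)$ is the standard "smooth majorant touching from above" argument and is valid for all $p>1$, including at $y=x$ thanks to the differentiability of $\tfrac1p\Vert\cdot\Vert^p$ at the origin; combined with the continuity of HOME (which makes the value-convergence condition in the limiting subdifferential automatic) and the outer semicontinuity of HOPE from Fact~\ref{th:level-bound+locally uniform}~\ref{level-bound+locally uniform2:conv}, this correctly delivers $\bs{\rm Mcrit}(\fgam{\gf}{p}{\gamma})\subseteq\bs{\rm Fix}(\prox{\gf}{\gamma}{p})$. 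The final inclusion $\bs{\rm Fix}(\prox{\gf}{\gamma}{p})\subseteq\bs{\rm Fcrit}(\gf)$ via dividing the global inequality by $\Vert y-\ov{x}\Vert$ and using $p>1$ is also correct. In short: nothing is missing beyond minor bookkeeping, and your proof would stand on its own where the paper only cites.
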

See Remark~\ref{rem:revofrel} for examples showing that the reverse inclusions in Fact~\ref{prop:relcrit} may fail.
%%%%%%%%%%%%%%%%%%%%%%%%%%%%%%%%%%%%%%%%%%%%%%%%%%%%%%%
The following theorem elucidates the relationships among calmness, high-order prox-boundedness, minimizers, and proximal fixed points.
\begin{theorem}[Relationships with $p$-calmness]\label{lem:progpcalm}
Let $p>1$ and $\gf: \R^n \to \Rinf$ be a proper lsc function and let $\ov{x}\in \dom{\gf}$.
Then, the following statements hold:
\begin{enumerate}[label=(\textbf{\alph*}), font=\normalfont\bfseries, leftmargin=0.7cm]
\item \label{lem:progpcalm:mincalm} If $\ov{x}\in \argmin{x\in \R^n}\gf(x)$, then $\ov{x}$ is a $p$-calm point of
$\gf$ with any constant $M>0$;
\item \label{lem:progpcalm:critic} If $\prox{\gf}{\gamma}{p}(\ov{x})=\{\ov{x}\}$, then $\ov{x}$ is a $p$-calm point of
$\gf$ with constant $M=\frac{1}{p\gamma}$;
\item \label{lem:progpcalm:critic2} If $\ov{x}$ is a $p$-calm point of $\gf$ with constant $M$, then for each $\gamma\in \left(0, \frac{1}{pM}\right]$, we have  $\prox{\gf}{\gamma}{p}(\ov{x})=\{\ov{x}\}$;
\item \label{lem:progpcalm:critic3} If for some $\widehat{\gamma}>0$, we have $\ov{x}\in \bs{\rm Fix}(\prox{\gf}{\widehat{\gamma}}{p})$, then for each $\gamma\in (0, \widehat{\gamma})$, we have $\prox{\gf}{\gamma}{p}(\ov{x})=\{\ov{x}\}$, which in turn implies that $\ov{x}$ is a $p$-calm point of
$\gf$ with constant $M=\frac{1}{p\gamma}$;
\item \label{lem:progpcalm:calm} If $\ov{x}$ is a $p$-calm point of $\gf$,  then $\gf$ is high-order prox-bounded.
\end{enumerate}
\end{theorem}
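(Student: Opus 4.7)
The plan is to prove the five assertions in the order $\ref{lem:progpcalm:mincalm}, \ref{lem:progpcalm:critic}, \ref{lem:progpcalm:critic2}, \ref{lem:progpcalm:critic3}, \ref{lem:progpcalm:calm}$, exploiting the near-tautological equivalence between ``$\ov{x}$ is a $p$-calm point with constant $M$'' and ``$\ov{x}$ is the unique minimizer of $y \mapsto \gf(y)+M\Vert y-\ov{x}\Vert^p$.'' This pivot is the conceptual engine behind everything; once it is made explicit, each clause becomes a short argument.

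For $\ref{lem:progpcalm:mincalm}$ I would simply observe that $\gf(x)\ge \gf(\ov{x})$ globally and $M\Vert x-\ov{x}\Vert^p>0$ for $x\neq \ov{x}$ and any $M>0$, giving the strict inequality in Definition~\ref{def:critic}~$\ref{def:critic:calm}$. For $\ref{lem:progpcalm:critic}$, the hypothesis $\prox{\gf}{\gamma}{p}(\ov{x})=\{\ov{x}\}$ says that $\ov{x}$ is the \emph{unique} minimizer of $y\mapsto \gf(y)+\frac{1}{p\gamma}\Vert \ov{x}-y\Vert^p$; evaluating this inequality strictly at any $x\neq \ov{x}$ yields precisely $p$-calmness with constant $M=\frac{1}{p\gamma}$. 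Assertion $\ref{lem:progpcalm:critic2}$ is the converse: for $\gamma\in(0,\frac{1}{pM}]$, one has $\frac{1}{p\gamma}\ge M$, and combining this with $p$-calmness gives $\gf(x)+\frac{1}{p\gamma}\Vert x-\ov{x}\Vert^p>\gf(\ov{x})$ for every $x\neq \ov{x}$, so $\ov{x}$ is the unique minimizer of the proximal subproblem.

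For $\ref{lem:progpcalm:critic3}$ the starting point is that $\ov{x}\in\prox{\gf}{\widehat{\gamma}}{p}(\ov{x})$ yields the (non-strict) global inequality $\gf(y)+\frac{1}{p\widehat{\gamma}}\Vert y-\ov{x}\Vert^p\ge \gf(\ov{x})$ for all $y$. Strict monotonicity of $\gamma\mapsto \frac{1}{p\gamma}$ then upgrades this to a strict inequality for any $\gamma\in(0,\widehat{\gamma})$ and any $y\neq \ov{x}$, giving $\prox{\gf}{\gamma}{p}(\ov{x})=\{\ov{x}\}$; an appeal to $\ref{lem:progpcalm:critic}$ closes the clause.

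Finally, for $\ref{lem:progpcalm:calm}$ I would use the (weak) form of $p$-calmness $\gf(x)\ge \gf(\ov{x})-M\Vert x-\ov{x}\Vert^p$ valid for all $x$ (equality at $\ov{x}$), then bound $\Vert x-\ov{x}\Vert^p\le 2^{p-1}(\Vert x\Vert^p+\Vert\ov{x}\Vert^p)$ via Fact~\ref{lemma:ineq:inequality p}~$\ref{lemma:ineq:inequality p:ineq4}$ to produce an affine-in-$\Vert x\Vert^p$ lower bound $\gf(x)+2^{p-1}M\Vert x\Vert^p\ge \gf(\ov{x})-2^{p-1}M\Vert\ov{x}\Vert^p$, which is exactly condition $\ref{lemma:charac:sprox:b}$ of Proposition~\ref{lemma:charac:sprox} and hence yields high-order prox-boundedness. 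I do not anticipate a genuine obstacle: the only step requiring a named inequality is $\ref{lem:progpcalm:calm}$, and the delicate bookkeeping there is limited to keeping track of the constant $2^{p-1}M$ so that the threshold claim falls out of Proposition~\ref{lemma:charac:sprox}.
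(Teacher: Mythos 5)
Your proposal is correct and follows essentially the same route as the paper: assertions (a)--(c) are the direct definitional arguments the paper leaves implicit, (d) is the same monotonicity-in-$\gamma$ upgrade from a non-strict to a strict inequality followed by an appeal to (b), and (e) uses the identical bound $\Vert x-\ov{x}\Vert^p\leq 2^{p-1}(\Vert x\Vert^p+\Vert\ov{x}\Vert^p)$ combined with Proposition~\ref{lemma:charac:sprox}. The only cosmetic difference is that you make the "unique minimizer of the proximal subproblem" reformulation explicit, which the paper treats as immediate.
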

\begin{proof}
$\ref{lem:progpcalm:mincalm}-\ref{lem:progpcalm:critic2}$ These claims directly follow from the definitions.\\
$\ref{lem:progpcalm:critic3}$ Given that $\ov{x}\in \bs{\rm Fix}(\prox{\gf}{\widehat{\gamma}}{p})$, for any $x\in\R^n$ and for any $\gamma\in (0, \widehat{\gamma})$, we have
\[\gf(\ov{x})\leq \gf(x)+\frac{1}{p\widehat{\gamma}}\Vert x - \ov{x}\Vert^p<\gf(x)+\frac{1}{p\gamma}\Vert x - \ov{x}\Vert^p.\]
Thus,  $\prox{\gf}{\gamma}{p}(\ov{x})=\{\ov{x}\}$. The second part comes from 
Assertion~$\ref{lem:progpcalm:critic}$.
\\
$\ref{lem:progpcalm:calm}$ By definition and using the inequality $\Vert x - \ov{x}\Vert^p\leq 2^{p-1}\left(\Vert x\Vert^p+\Vert \ov{x}\Vert^p\right)$, there exists some $M>0$ such that
\[
\gf(x)+M2^{p-1}\left(\Vert x\Vert^p+\Vert \ov{x}\Vert^p\right)\geq \gf(x)+M\Vert x - \ov{x}\Vert^p>\gf(\ov{x}),\qquad \forall x\in\R^n,
\]
which implies high-order prox-boundedness by Proposition~\ref{lemma:charac:sprox}.
\end{proof}

%%%%%%%%%%%%%%%%%%%%%%%%%%%%%%%%%%%%%%%%%%%%%%%%%%%%%%%
%%%%%%%%%%%%%%%%%%%%%%%%%%%%%%%%%%%%%%%%%%%%%%%%%%%%%%%

The relationships among calm points and other concepts, such as minimizers and proximal fixed points, highlighted in Theorem~\ref{lem:progpcalm}, underscore the significance of analyzing these points. Consequently, in the next section, we establish the properties of HOME and HOPE around these points. It is important to note that, while $p$-calmness implies high-order prox-boundedness, it is a stronger condition in the sense that high-order prox-boundedness does not imply $p$-calmness at each point, as illustrated by the following example.

\begin{example}\label{exam:hopbnotcalm}
Let $\gf: \R\to\R$ be given as 
\begin{equation*}\label{eq:hopbnotcalm}
\gf(x):=\left\{\begin{array}{ll}
                 -\vert x\vert & x\in [-1, 1],\\
                 |x|-2~~ & \text{otherwise}.
               \end{array}\right.
\end{equation*}
This function is lower-bounded and, consequently, high-order prox-bounded. However, at $\ov{x}=0$, for every $M>0$ and $p>1$, there exists some $x\in \R$ such that $\gf(x)+M\Vert x - \ov{x}\Vert^p<\gf(\ov{x})$. Thus, $\ov{x}$  is not a $p$-calm point of $\gf$.
\end{example}

%%%%%%%%%%%%%%%%%%%%%%%%%%%%%%%%%%%%%%%%%%%%%%%%%%%%%%%%%%%%%%%%%%%%%%%%%%%%%%%%%%
%%%%%%%%%%%%%%%%%%%%%%%%%%%%%%%%%%%%%%%%%%%%%%%%%%%%%%%%%%%%%%%%%%%%%%%%%%%%%%%%%%
\section{On differentiability and weak smoothness of HOME}\label{sec:on diff}
In this section, we examine the differentiability and weak smoothness of HOME around $p$-calm points. Using $q$-prox-regularity (Definition~\ref{def:pprox-regular}), we show differentiability when $q \geq 2$ and $p \in (1,2]$ or $2 \leq p \leq q$ (Theorems~\ref{th:dif:proxreg12}~and~\ref{th:dif:proxreg}). We also discuss weak smoothness under these conditions (Theorems~\ref{th:dif:proxreg12:weaksm}~and~\ref{th:dif:proxreg:weak}).

\subsection{{\bf Differential properties of HOME}}
\label{subsec:on diff}
Our first result illustrates the relationship between the continuous differentiability of HOME and the single-valuedness of HOPE, which is a direct consequence of \cite[Proposition~3.1]{KecisThibault15}.

\begin{proposition}[Characterization of differentiability of HOME]\label{th:diffcharact}
Let $p > 1$ and $\gf: \R^n \to \Rinf$ be a proper lsc function that is high-order prox-bounded with a threshold $\gamma^{\gf, p} > 0$. Then, for each $\gamma \in (0, \gamma^{\gf, p})$ and open subset $U \subseteq \R^n$, the following statements are equivalent:
\begin{enumerate}[label=(\textbf{\alph*}), font=\normalfont\bfseries, leftmargin=0.7cm]
\item \label{th:diffcharact:diff} $\fgam{\gf}{p}{\gamma} \in \mathcal{C}^{1}(U)$;
\item \label{th:diffcharact:prox} $\prox{\gf}{\gamma}{p}$ is nonempty, single-valued, and continuous on $U$.
\end{enumerate}
Under these conditions, for any $x \in U$ and $y = \prox{\gf}{\gamma}{p}(x)$, we have $\nabla\fgam{\gf}{p}{\gamma}(x) = \frac{1}{\gamma} \Vert x - y \Vert^{p-2} (x - y)$.
\end{proposition}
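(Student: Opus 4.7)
The strategy is to exploit the two natural envelope inequalities at a point $x\in U$: the \emph{upper} inequality
\[\fgam{\gf}{p}{\gamma}(x') \leq \gf(y) + \frac{1}{p\gamma}\|x'-y\|^p, \quad \text{valid for any } y\in\prox{\gf}{\gamma}{p}(x),\]
obtained by testing $y$ at $x'$, together with the matching \emph{identity}
\[\fgam{\gf}{p}{\gamma}(x') = \gf(y') + \frac{1}{p\gamma}\|x'-y'\|^p, \quad \text{for any } y'\in\prox{\gf}{\gamma}{p}(x').\]
Since the regularizer $z\mapsto\frac{1}{p}\|z\|^p$ is $\mathcal{C}^1$ on $\R^n$ with gradient $\|z\|^{p-2}z$ (using the convention $0/0=0$ from Section~\ref{sec:preliminaries}), these two bounds sandwich the increment $\fgam{\gf}{p}{\gamma}(x')-\fgam{\gf}{p}{\gamma}(x)$ between two explicit linear expressions.

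For $\ref{th:diffcharact:prox}\Rightarrow\ref{th:diffcharact:diff}$, I would fix $x\in U$ with $y=\prox{\gf}{\gamma}{p}(x)$ and, for $x'\in U$ with $y'=\prox{\gf}{\gamma}{p}(x')$, combine the upper bound at $x'$ tested against $y$ with the lower bound obtained by swapping the roles of $x$ and $x'$:
\[\frac{1}{p\gamma}\bigl(\|x'-y'\|^p-\|x-y'\|^p\bigr) \leq \fgam{\gf}{p}{\gamma}(x')-\fgam{\gf}{p}{\gamma}(x) \leq \frac{1}{p\gamma}\bigl(\|x'-y\|^p-\|x-y\|^p\bigr).\]
A first-order Taylor expansion of $z\mapsto\frac{1}{p}\|z-y\|^p$ around $x$ turns the upper bound into $\frac{1}{\gamma}\langle\|x-y\|^{p-2}(x-y),x'-x\rangle+o(\|x'-x\|)$, and the analogous expansion of $z\mapsto\frac{1}{p}\|z-y'\|^p$ produces the lower bound with coefficient $\|x-y'\|^{p-2}(x-y')$ in place of $\|x-y\|^{p-2}(x-y)$. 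The single-valued continuity of $\prox{\gf}{\gamma}{p}$ forces $y'\to y$ as $x'\to x$, and continuity of $z\mapsto\|z\|^{p-2}z$ then makes the two coefficients agree in the limit, so the sandwich yields Fr\'echet differentiability at $x$ with the claimed gradient. Continuity of $\nabla\fgam{\gf}{p}{\gamma}$ on $U$ is then immediate from the formula.

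For $\ref{th:diffcharact:diff}\Rightarrow\ref{th:diffcharact:prox}$, nonemptiness and compactness of $\prox{\gf}{\gamma}{p}(x)$ on $U$ are already supplied by Fact~\ref{th:level-bound+locally uniform}~$\ref{level-bound+locally uniform:proxnonemp}$. For every $y\in\prox{\gf}{\gamma}{p}(x)$, I would use only the upper bound with $x'=x\pm th$, divide by $\pm t$, and let $t\to 0^+$; the squeeze of one-sided directional derivatives against $\nabla\fgam{\gf}{p}{\gamma}(x)$ (which exists by hypothesis) forces
\[\|x-y\|^{p-2}(x-y) = \gamma\nabla\fgam{\gf}{p}{\gamma}(x)\]
for \emph{every} such $y$. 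The key algebraic observation is the injectivity of $z\mapsto\|z\|^{p-2}z$ on $\R^n$: if two images coincide, either both arguments vanish, or taking norms gives equal $(p-1)$-powers---hence equal norms because $p>1$---and comparing directions of the nonzero vectors finishes the argument. This pins $y$ down uniquely from $\nabla\fgam{\gf}{p}{\gamma}(x)$, so $\prox{\gf}{\gamma}{p}$ is single-valued on $U$; continuity of the resulting selection is then a direct consequence of its outer semicontinuity recorded in Fact~\ref{th:level-bound+locally uniform}~$\ref{level-bound+locally uniform2:conv}$.

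The most delicate step I anticipate is controlling the Taylor remainder in the lower bound in $\ref{th:diffcharact:prox}\Rightarrow\ref{th:diffcharact:diff}$, because the linearization center $y'$ itself moves with $x'$. This is resolved by combining local boundedness of $\prox{\gf}{\gamma}{p}$ near $x$ (Fact~\ref{th:level-bound+locally uniform}~$\ref{level-bound+locally uniform2:conv}$) with uniform continuity of $z\mapsto\|z\|^{p-2}z$ on a compact neighborhood containing all relevant arguments, so that the difference between the two coefficients is $o(1)$ and, after pairing with $x'-x$, absorbs into $o(\|x'-x\|)$ as required.
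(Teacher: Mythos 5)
Your argument is correct, and it is genuinely more than the paper offers: the paper proves this proposition simply by declaring it ``a direct consequence of \cite[Proposition~3.1]{KecisThibault15}'', whereas you supply a self-contained proof. Your two-sided envelope sandwich for $\ref{th:diffcharact:prox}\Rightarrow\ref{th:diffcharact:diff}$ and the ``upper bound touches at $x$, so gradients match'' argument for $\ref{th:diffcharact:diff}\Rightarrow\ref{th:diffcharact:prox}$ are the standard route to this kind of statement, and every step checks out: the identity $\fgam{\gf}{p}{\gamma}(x)=\gf(y)+\tfrac{1}{p\gamma}\|x-y\|^p$ for $y\in\prox{\gf}{\gamma}{p}(x)$ together with the inequality at $x'$ gives both halves of the sandwich; the uniformity of the Taylor remainder in the lower bound is exactly the delicate point, and your resolution (local boundedness of HOPE from Fact~\ref{th:level-bound+locally uniform}~$\ref{level-bound+locally uniform2:conv}$ plus uniform continuity of $z\mapsto\|z\|^{p-2}z$ on compacta --- indeed globally $(p-1)$-H\"{o}lder for $p\in(1,2]$ and locally Lipschitz for $p\geq 2$) is sound. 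In the converse direction, the injectivity of $z\mapsto\|z\|^{p-2}z$ is correctly reduced to comparing norms via the strictly increasing map $t\mapsto t^{p-1}$ on $[0,\infty)$, and single-valuedness plus the outer semicontinuity in Fact~\ref{th:level-bound+locally uniform}~$\ref{level-bound+locally uniform2:conv}$ does yield continuity of the selection. The only presentational caveat is that finiteness of $\fgam{\gf}{p}{\gamma}$ and nonemptiness of $\prox{\gf}{\gamma}{p}$ (needed before any of the inequalities make sense) should be invoked explicitly from Fact~\ref{th:level-bound+locally uniform}~$\ref{level-bound+locally uniform:proxnonemp}$ at the outset, which you do implicitly; with that made explicit, your proof could stand in place of the citation.
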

%%%%%%%%%%%%%%%%%%%%%%%%%
From Proposition~\ref{th:diffcharact}~$\ref{th:diffcharact:prox}$, it is clear that the differentiability of $\fgam{\gf}{p}{\gamma}$ depends on single-valuedness and continuity of $\prox{\gf}{\gamma}{p}$. Next, we explore assumptions that guarantee these desired conditions.

%%%%%%%%%%%%%%%%%%%%%%%
The following definition specifies the class of prox-regular functions and some of its specific subclasses.
 
 %%%%%%%%%%%%%%%%%%%%%%%%%%%%%%%%%%%%%%%%%%%%%%%%%%%%%%%

%%%%%%%%%%%%%%%%%%%%%%%%%%%%%%%%%%%%%%%%%%%%%%%%%%%%%%%
\begin{definition}[$q$-prox-regularity]\label{def:pprox-regular}
Let $\gf: \R^n \to \Rinf$ be a proper lsc function and $\ov{x} \in \dom{\gf}$. Then, $\gf$ is called \textit{$q$-prox-regular} at $\ov{x}$ for $\ov{\zeta} \in \partial \gf(\ov{x})$ and $q \geq 2$ if there exist $\varepsilon > 0$ and $\rho \geq 0$ such that
\begin{equation}\label{eq:def:qprox}
\gf(x') \geq \gf(x) + \langle \zeta, x' - x \rangle - \frac{\rho}{2} \Vert x' - x \Vert^q, \qquad \forall x' \in \mb(\ov{x}; \varepsilon),
\end{equation}
whenever $x \in \mb(\ov{x}; \varepsilon)$, $\zeta \in \partial \gf(x) \cap \mb(\ov{\zeta}; \varepsilon)$, and $\gf(x) < \gf(\ov{x}) + \varepsilon$.
\end{definition}
%%%%%%%%%%%%%%%%%%%%%%%%%%%%%%%%%%%%%%%%%%%%%%%%%%%%%%%%%%%%%%%%
When $q = 2$, $q$-prox-regularity is simply referred to as prox-regularity.
The following remark highlights several points concerning the definition of $q$-prox-regularity. 

%%%%%%%%%%%%%%%%%%%%%%%%%%%%%%%%%%%%%%%%%%%%%%%%%%%%%%%%%%%%%%%%
\begin{remark}\label{rem:remonqprox}
\begin{enumerate}[label=(\textbf{\alph*}), font=\normalfont\bfseries, leftmargin=0.7cm]
\item\label{rem:remonqprox:d} We note that $q$-prox-regularity with $q \in (1, 2)$ is particularly interesting, as it encompasses paraconvex functions \cite{Rolewicz00} and functions with H\"{o}lder continuous gradients. However, the study of this class of functions lies beyond the scope of this paper and will be addressed in a separate work.

\item \label{rem:remonqprox:b} The main motivation behind introducing $q$-prox-regularity lies in their behavior around $p$-calm points to achieve differentiability of $\fgam{\gf}{p}{\gamma}$, as will be discussed in
Theorems~\ref{th:dif:proxreg12}~and~\ref{th:dif:proxreg}. In fact, if $\bar{x}$ is a $p$-calm point, then by Theorem~\ref{lem:progpcalm}~\ref{lem:progpcalm:critic2},  with an appropriate choice of $\gamma$, we have $\ov{x}\in \bs{\rm Fix}(\prox{\gf}{\gamma}{p})$. Consequently, from 
Fact~\ref{prop:relcrit},  it follows that $\ov{x}\in \bs{\rm Mcrit}(\gf)$. Now, substituting $x=\ov{x}$ and $\zeta=0$ into \eqref{eq:def:qprox}, we obtain
 \[
\gf(x') \geq \gf(\ov{x}) - \frac{\rho}{2} \Vert x' - \ov{x} \Vert^q, \qquad \forall x' \in \mb(\ov{x}; \varepsilon).
\]
Thus, by choosing some $M> \frac{\rho}{2}$, we establish a local $q$-calmness  for $\ov{x}$. Later, we utilize the power $p$ of the $p$-calmness property in the HOPE operator while allowing $q\geq p$ for the case where $p\geq 2$; see Theorem~\ref{th:dif:proxreg}.
\end{enumerate}
\end{remark}
%%%%%%%%%%%%%%%%%%%%%%%%%%%%%%%%%%%%%%%%%%%%%%%%%%%%%%%%%%%%%%%%%
The following proposition, together with the subsequent example, clarifies the relationship between prox-regularity and $q$-prox-regularity.
\begin{proposition}[$q$-prox-regularity implies prox-regularity]\label{prop:qproxprox}
Let $\gf: \R^n \to \Rinf$ be a proper lsc function and $\ov{x} \in \dom{\gf}$. If $\gf$ is $q$-prox-regular at $\ov{x}$ for $\ov{\zeta} \in \partial \gf(\ov{x})$ and $q \geq 2$, then it is  prox-regular at $\ov{x}$ for $\ov{\zeta} \in \partial \gf(\ov{x})$.
\end{proposition}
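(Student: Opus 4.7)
The plan is to show that the $q$-prox-regularity inequality can be converted directly into the prox-regularity ($q=2$) inequality by exploiting the boundedness of $\Vert x' - x\Vert$ on the neighborhood $\mb(\ov{x};\varepsilon)$. The key observation is trivial but decisive: for any two points $x,x' \in \mb(\ov{x};\varepsilon)$, the triangle inequality gives $\Vert x' - x\Vert \leq 2\varepsilon$, and since $q\geq 2$ the exponent $q-2$ is nonnegative, hence
\[
\Vert x' - x\Vert^q = \Vert x' - x\Vert^{q-2}\,\Vert x' - x\Vert^2 \leq (2\varepsilon)^{q-2}\,\Vert x' - x\Vert^2.
\]

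Given the constants $\varepsilon>0$ and $\rho\geq 0$ furnished by the $q$-prox-regularity of $\gf$ at $\ov{x}$ for $\ov{\zeta}\in\partial\gf(\ov{x})$, I would define $\rho' := \rho(2\varepsilon)^{q-2}$ and keep the same neighborhood parameter $\varepsilon$. Then for every $x\in\mb(\ov{x};\varepsilon)$, $\zeta\in\partial\gf(x)\cap\mb(\ov{\zeta};\varepsilon)$ with $\gf(x)<\gf(\ov{x})+\varepsilon$, and every $x'\in\mb(\ov{x};\varepsilon)$, the $q$-prox-regularity inequality combined with the estimate above yields
\[
\gf(x')\geq \gf(x)+\langle \zeta,x'-x\rangle-\frac{\rho}{2}\Vert x'-x\Vert^q\geq \gf(x)+\langle \zeta,x'-x\rangle-\frac{\rho'}{2}\Vert x'-x\Vert^2,
\]
which is precisely the defining inequality of prox-regularity (Definition~\ref{def:prox-regular}) at $\ov{x}$ for $\ov{\zeta}$ with constants $\varepsilon$ and $\rho'$.

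There is no real obstacle here; the only subtlety is conceptual, namely that the prox-regularity modulus $\rho'$ is allowed to depend on the size $\varepsilon$ of the neighborhood in which the inequality is required to hold, so inflating $\rho$ by the factor $(2\varepsilon)^{q-2}$ is admissible. Note that this argument crucially uses $q\geq 2$; if $q\in(1,2)$ were allowed, then $\Vert x'-x\Vert^q$ could not be dominated by $\Vert x'-x\Vert^2$ near $x'=x$, which is consistent with Remark~\ref{rem:remonqprox}~\ref{rem:remonqprox:d} where the case $q\in(1,2)$ is deferred to future work.
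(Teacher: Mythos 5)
Your proof is correct and follows essentially the same route as the paper: both arguments exploit that $\Vert x'-x\Vert$ is bounded by $2\varepsilon$ on the neighborhood and that $q-2\geq 0$, so $\Vert x'-x\Vert^q$ is dominated by a constant times $\Vert x'-x\Vert^2$. The only cosmetic difference is that the paper shrinks $\varepsilon$ below $\tfrac12$ so that the constant is $1$ and the same $\rho$ can be kept, whereas you retain the original $\varepsilon$ and absorb the factor $(2\varepsilon)^{q-2}$ into a new modulus $\rho'$; both are equally valid.
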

\begin{proof}
Let $\gf$ be $q$-prox-regular at $\ov{x}$ for $\ov{\zeta} \in \partial \gf(\ov{x})$ with $q \geq 2$, $\varepsilon > 0$, and $\rho \geq 0$. Then, $q$-prox-regularity is preserved when $\varepsilon$ is shrunk. Hence, without loss of generality, we may assume $\varepsilon<\tfrac{1}{2}$ and that \eqref{eq:def:qprox} holds for all $x' \in \mb(\ov{x}; \varepsilon)$
whenever $x \in \mb(\ov{x}; \varepsilon)$, $\zeta \in \partial \gf(x) \cap \mb(\ov{\zeta}; \varepsilon)$, and $\gf(x) < \gf(\ov{x}) + \varepsilon$. For such $x, x' \in \mb(\ov{x}; \varepsilon)$, we have 
\[
\Vert x' - x \Vert\leq \Vert x' - \ov{x} \Vert + \Vert x - \ov{x} \Vert<2\varepsilon<1.
\]
Thus, since $q\geq 2$, $\Vert x' - x \Vert^q\leq \Vert x' - x \Vert^2$.
Consequently, for all $x' \in \mb(\ov{x}; \varepsilon)$,
\begin{align*}
\gf(x') &\geq \gf(x) + \langle \zeta, x' - x \rangle - \frac{\rho}{2} \Vert x' - x \Vert^q
\\  &\geq \gf(x) + \langle \zeta, x' - x \rangle - \frac{\rho}{2} \Vert x' - x \Vert^2,
\end{align*}
whenever $x \in \mb(\ov{x}; \varepsilon)$, $\zeta \in \partial \gf(x) \cap \mb(\ov{\zeta}; \varepsilon)$, and $\gf(x) < \gf(\ov{x}) + \varepsilon$.
Hence, by choosing $\varepsilon$ sufficiently small in Definition~\ref{def:pprox-regular}, every $q$-prox-regular function (with $q\geq 2$) is prox-regular.
\end{proof}
%%%%%%%%%
While every $q$-prox-regular function (with $q\ge 2$) is prox-regular, the converse need not hold. The following example, deals with this statement.
\begin{example}[Prox-regular but not $q$-prox-regular for any $q>2$]\label{exam:prnotq}
Consider $\gf:\R\to\R$ given by
$\gf(x) = x^4 - x^2$. While this function is prox-regular at $\ov{x} = 0$ for $\ov{\zeta}=0\in \partial \gf(\ov{x})=\{\nabla \gf(\ov{x})$\}, it is not $q$-prox-regular for any $q > 2$. 
To see this, it suffices to substitute $x=\ov{x}$ in \eqref{eq:def:qprox}. Clearly, for any $\rho>0$ and $q>2$, there always exists an $ x' \in \mb(\ov{x}; \varepsilon)$ such that $\gf(x')=x'^4 - x'^2< - \frac{\rho}{2} \Vert x' \Vert^q$.
\end{example}

As noted in Example~\ref{exam:prnotq}, there exist prox-regular functions which are not $q$-prox-regular for any $q > 2$.
On the other hand, convex and locally convex functions as important subclasses of prox-regular functions are $q$-prox-regular for all $q>2$. 
Moreover, as the following example demonstrates, even certain nonconvex functions, such as lsc piecewise convex functions with jumps at their endpoints, are also $q$-prox-regular for every $q>2$.

\begin{example}[Piecewise convex functions with jumps at the endpoints]
 If $\gh: \R\to \R$ is a lsc piecewise convex function with jumps at the endpoints, we can represent its domain, potentially by excluding some boundary points, as the union of intervals $\bigcup_{i=1}^m (a_i, b_i]$, where $m\in \mathbb{N}$ and $a_i<b_i= a_{i+1}$ for each $1\leq i\leq m$.  On each interval $(a_i, b_i]$,  $\gh$ is convex, and there exist constants  $\theta_i, \lambda_i>0$ such that $\gh(b_i)+\lambda_i<\gh(b_i+\mu_i)$ for each $\mu_i\in (0, \theta_i)$. For an endpoint $b_i$, there exists an $\varepsilon_i>0$ such that
   \[C_i:=\{x\in \R\mid \gh(x) < \gh(b_i) + \varepsilon_i\}\cap \mb(b_i; \varepsilon_i)\subseteq (a_i, b_i].\]Thus, $\gh$ is convex on $C_i$, and the conditions outlined in Definition~\ref{def:pprox-regular} are satisfied. The generalization of this result to an lsc piecewise convex function $\gh: \R^n \to \R$ with a jump at boundary points is straightforward.
     This class of functions frequently arises in practical applications, such as in constrained convex optimization problems, where it is reformulated as an unconstrained optimization problem by incorporating an indicator function - an lsc piecewise convex function with jumps at the boundary points of the constraint set. Further examples include discontinuous piecewise linear optimization \cite{Conn98}, the $\ell_0$-norm regularization \cite{yang2023projective}, piecewise regression models \cite{lu2023advanced}, and other similar applications.
\end{example}

%%%%%%%%%%%%%%%%%%%%%%%%%%%%%%%%%%%%%%%%%%%%%%%%%%%%%%%
The differentiability of $\fgam{\gf}{p}{\gamma}$ in the vicinity of $\ov{x} \in \R^n$ was explored in the seminal work \cite{Poliquin96}, for the case $p = 2$ and when $\gf$ is prox-regular at $\ov{x}$ for $\ov{\zeta} \in \partial \gf(\ov{x})$. For further insights, we refer the readers to \cite[Proposition~13.37]{Rockafellar09}.
A natural inquiry is whether an analogous result holds for $p \neq 2$ under $q$-prox-regularity. In the following, we focus on this question for both cases: $p > 2$ and $p \in (1, 2]$.

Note that, as opposed to convex functions, $q$-prox-regular functions are not necessarily high-order prox-bounded, even when $q = p = 2$. For instance, consider the following example.

\begin{example}\label{ex:nondif:prox:prox}
Let $\gf: \R \to \R$ be defined as $\gf(x) = -\exp(x^2)$. Since $\gf \in \mathcal{C}^2(\R)$, it is a prox-regular function. For any $\gamma > 0$ and $x \in \R$, we have 
\[
\mathop{\bs\lim}\limits_{|y| \to \infty} \frac{-\exp(y^2)}{\frac{1}{2\gamma} |y - x|^2} = -\infty.
\]
As $|y| \to \infty$, the term $-\exp(y^2)$ dominates $\frac{1}{2\gamma} |y - x|^2$, causing the function 
\[f(y) = -\exp(y^2) + \frac{1}{2\gamma} |y - x|^2,\]
to tend towards $-\infty$, regardless of how small $\gamma$ is. Therefore, for any $\gamma > 0$ and $x \in \R$, we have $\fgam{\gf}{2}{\gamma}(x) = -\infty$.
\end{example}

If $\gf$ is both prox-regular and $2$-calm at $\ov{x}$, then with an appropriately selected $\gamma$, the function $\fgam{\gf}{2}{\gamma}$ is continuously differentiable in a neighborhood of $\ov{x}$ \cite[Theorem~4.4]{Poliquin96}. However, the following example demonstrates that prox-regularity alone does not ensure the differentiability of $\fgam{\gf}{p}{\gamma}$ when $p>2$, even under $2$-calmness.

\begin{example}[Nondifferentiability of HOME under prox-regularity]\label{ex:nondif:prox:p3}
Let $\gf: \R \to \R$ be defined by $\gf(x) = x^4 - x^2$, which is prox-regular at any $x \in \R$ and $2$-calm at $\ov{x} = 0$. Let $p > 2$ and $\gamma > 0$ be arbitrary. Considering the optimality condition for $\bs\inf_{y \in \R}\left\{y^4 - y^2 + \frac{1}{p\gamma}|y|^p\right\}$, we obtain
$4y^3 - 2y + \frac{1}{\gamma}\vert y\vert^{p-2}y = 0$.
The critical point $y = 0$ is a local maximizer. Defining $f(y) = 4y^2 - 2 + \frac{1}{\gamma}|y|^{p-2}$, we observe that $f(0) < 0$ and $f(1) = f(-1) > 0$, indicating the existence of at least two critical points corresponding to global minimizers. Consequently, for any $p > 2$ and $\gamma > 0$, $\prox{\gf}{\gamma}{p}(\ov{x})$ is not single-valued, indicating the nondifferentiability of $\fgam{\gf}{p}{\gamma}$ at $\ov{x}$, as indicated in Proposition~\ref{th:diffcharact}.
\end{example}
%%%%%%%%%%%%%%%%%%%%%%%%%%%%%%%%%%%%%%%%%%%%%%%%%
%%%%%%%%%%%%%%%%%%%%%%%%%%%%%%%%%%%%%%%%%%%%%%%%%

%%%%%%%%%%%%%%%%%%%%%%%%%%%%%%%%%%%%%%%%%%%%%%%%%%%%%%%
%%%%%%%%%%%%%%%%%%%%%%%%%%%%%%%%%%%%%%%%%%%%%%%%%%%%%%%
Now, we address the differentiability of $\fgam{\gf}{p}{\gamma}$ and the single-valuedness of $\prox{\gf}{\gamma}{p}$ under $q$-prox-regularity and $p$-calmness. 
Before going into details, we provide the following lemma which is helpful in the rest of this section.
In Fact~\ref{th:level-bound+locally uniform}, we showed that under the high-order prox-boundedness of the function $\gf$, by selecting an appropriate $\gamma$, the set $\prox{\gf}{\gamma}{p}(x)$ is nonempty and bounded for each $x \in \R^n$. In the following lemma, we demonstrate the \textit{uniform boundedness} of HOPE around a $p$-calm point; that is, there exists a neighborhood $U$ of the $p$-calm point $\ov{x}$ and a constant $\varepsilon > 0$ such that for each $x \in U$, we have
$\prox{\gf}{\gamma}{p}(x) \subseteq \mb(\ov{x}; \varepsilon)$.
For the sake of simplicity, we assume that $\ov{x} = 0$ is a $p$-calm point of $\gf$ and $\gf(\ov{x}) = 0$.

%%%%%%%%%%%%%%%%%%%%%%%%%%%%%%%%%%%%%%%%%%%%%%%%%
%%%%%%%%%%%%%%%%%%%%%%%%%%%%%%%%%%%%%%%%%%%%%%%%%
\begin{lemma}[Uniform boundedness of HOPE]\label{lem:prox:pcalm}
Let $p > 1$ and $\gf: \R^n \to \Rinf$ be a proper lsc function. Suppose $\ov{x} = 0$ is a $p$-calm point of $\gf$ with constant $M > 0$ and $\gf(\ov{x}) = 0$. Then, for any $\gamma \in \left(0, \frac{2^{1-p}}{Mp}\right)$ and any $\varepsilon > 0$, there exists a neighborhood $U$ of $\ov{x}$ such that for any $x \in U$, we have $\prox{\gf}{\gamma}{p}(x) \neq \emptyset$. Moreover, if $y \in \prox{\gf}{\gamma}{p}(x)$, then
\[
\Vert y \Vert < \varepsilon, \quad \gf(y) < \varepsilon, \quad \text{and} \quad \frac{1}{\gamma} \Vert x - y \Vert^{p-1} < \varepsilon.
\]
\end{lemma}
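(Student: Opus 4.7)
The plan is to combine three ingredients: the non-emptiness of HOPE in a neighborhood of $\ov{x}$, the descent estimate forced by $y \in \prox{\gf}{\gamma}{p}(x)$, and the $p$-calm lower bound $\gf(y) \geq -M\|y\|^p$ that follows from Definition~\ref{def:critic}~\ref{def:critic:calm} combined with $\gf(\ov{x}) = 0$.

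First I would verify that, under the hypothesis $\gamma \in (0, 2^{1-p}/(Mp))$, the threshold of high-order prox-boundedness satisfies $\gamma^{\gf, p} \geq 2^{1-p}/(Mp)$. This sharpens Theorem~\ref{lem:progpcalm}~\ref{lem:progpcalm:calm} and is read off from the proof of Proposition~\ref{lemma:charac:sprox}~\ref{lemma:charac:sprox:c}$\,\Rightarrow\,$\ref{lemma:charac:sprox:a}, applied with the estimate $\gf(y) + M 2^{p-1}\|y\|^p \geq 0$ derived from $p$-calmness at $\ov{x}=0$ via Fact~\ref{lemma:ineq:inequality p}~\ref{lemma:ineq:inequality p:ineq4}. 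Fact~\ref{th:level-bound+locally uniform}~\ref{level-bound+locally uniform:proxnonemp} then yields $\prox{\gf}{\gamma}{p}(x) \neq \emptyset$ for every $x \in \R^n$. Second, fixing any $y \in \prox{\gf}{\gamma}{p}(x)$ and using $\ov{x} = 0$ as a test point in the definition of HOPE delivers
\[
\gf(y) + \frac{1}{p\gamma}\|x - y\|^p \leq \frac{1}{p\gamma}\|x\|^p.
\]
Third, I would apply Fact~\ref{lemma:ineq:inequality p}~\ref{lemma:ineq:inequality p:ineq1} with $\lambda = 1/2$, $a = y$, $b = -x$ to obtain $\|x - y\|^p \geq 2^{1-p}\|y\|^p - \|x\|^p$. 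Substituting this together with $\gf(y) \geq -M\|y\|^p$ into the displayed inequality and rearranging yields
\[
\Bigl(\tfrac{2^{1-p}}{p\gamma} - M\Bigr)\|y\|^p \leq \tfrac{2}{p\gamma}\|x\|^p,
\]
whose coefficient on the left is strictly positive by the choice of $\gamma$. Hence there is a constant $C_1>0$, depending only on $M$, $p$, and $\gamma$, with $\|y\| \leq C_1\|x\|$, independently of the selection of $y$.

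To finish, the bound $\gf(y) \leq \frac{1}{p\gamma}\|x\|^p$ is read directly from the previous display, and feeding $\|y\| \leq C_1 \|x\|$ back into $\frac{1}{p\gamma}\|x - y\|^p \leq \frac{1}{p\gamma}\|x\|^p + M\|y\|^p$ produces an estimate of the form $\|x - y\|^p \leq C_2\|x\|^p$; in particular $\frac{1}{\gamma}\|x - y\|^{p-1} \leq \gamma^{-1}C_2^{(p-1)/p}\|x\|^{p-1}$. All three target quantities therefore vanish uniformly as $x \to \ov{x}$, so the neighborhood $U$ can be taken as the intersection of three explicit balls around $\ov{x}$ determined by $\varepsilon$, $C_1$, $C_2$, $M$, $p$, and $\gamma$.

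The main subtlety to watch is uniformity over the possibly set-valued image $\prox{\gf}{\gamma}{p}(x)$, since the conclusions must hold for \emph{every} selection $y \in \prox{\gf}{\gamma}{p}(x)$. This is not an obstruction, however, because every estimate above used only the generic membership $y \in \prox{\gf}{\gamma}{p}(x)$ together with the universal inequalities of Fact~\ref{lemma:ineq:inequality p} and the pointwise $p$-calm bound; no selection argument is involved, so the resulting neighborhood $U$ works simultaneously for the whole set $\prox{\gf}{\gamma}{p}(x)$.
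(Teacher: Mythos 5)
Your proposal is correct and follows essentially the same route as the paper's proof: test the proximal subproblem against $y=\ov{x}=0$ to get $\gf(y)+\frac{1}{p\gamma}\Vert x-y\Vert^p\leq\frac{1}{p\gamma}\Vert x\Vert^p$, combine the $p$-calm lower bound $\gf(y)\geq -M\Vert y\Vert^p$ with Fact~\ref{lemma:ineq:inequality p}~\ref{lemma:ineq:inequality p:ineq1} at $\lambda=\tfrac12$, and rearrange to obtain $\left(\tfrac{2^{1-p}}{p\gamma}-M\right)\Vert y\Vert^p\leq\tfrac{2}{p\gamma}\Vert x\Vert^p$, from which all three target bounds follow uniformly over the set $\prox{\gf}{\gamma}{p}(x)$. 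The only organizational difference is that you secure non-emptiness of HOPE globally by first checking $\gamma^{\gf,p}\geq\tfrac{2^{1-p}}{Mp}$ and invoking Fact~\ref{th:level-bound+locally uniform}, whereas the paper argues with $\delta$-approximate minimizers confined to a compact set $C$ and only then passes to exact minimizers; both are valid.
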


\begin{proof}
For $x\in\R^n$ and an arbitrary $\delta>0$, it holds that
\begin{equation}\label{eq1:lem:prox:pcalm}
\fgam{\gf}{p}{\gamma}(x)\leq \gf(0)+\frac{1}{p\gamma}\Vert x\Vert^p=\frac{1}{p\gamma}\Vert x\Vert^p,
\end{equation}
and there exists $y\in\R^n$ depending on $\delta$ such that
\begin{equation}\label{eq1b:lem:prox:pcalm}
\gf(y)+\frac{1}{p\gamma}\Vert x-y\Vert^p\leq \fgam{\gf}{p}{\gamma}(x)+\delta.
\end{equation}
For such $y$, from \eqref{eq1:lem:prox:pcalm} and the $p$-calmness at $\ov{x}=0$, we have
\begin{equation}\label{eq2:lem:prox:pcalm}
-M\Vert y\Vert^p+\frac{1}{p\gamma}\Vert x-y\Vert^p\leq \gf(y)+\frac{1}{p\gamma}\Vert y - x\Vert^p\leq \fgam{\gf}{p}{\gamma}(x)+\delta\leq \frac{1}{p\gamma}\Vert x\Vert^p+\delta.
\end{equation}
%Since
%$2^{1-p}\Vert y\Vert^p-\Vert x\Vert^p\leq \Vert y-x\Vert^p$.
Together with Fact~\ref{lemma:ineq:inequality p}~\ref{lemma:ineq:inequality p:ineq1}, \eqref{eq2:lem:prox:pcalm} yields
\begin{align*}
 -M\Vert y\Vert^p+\frac{1}{p\gamma}\left(2^{1-p}\Vert y\Vert^p-\Vert x\Vert^p\right)\leq \frac{1}{p\gamma}\Vert x\Vert^p+\delta,
\end{align*}
i.e.,
$
(2^{1-p}-Mp\gamma)\Vert y\Vert^p\leq 2\Vert x\Vert^p+p\gamma\delta
$.
Setting $\mu:=(2^{1-p}-Mp\gamma)^{-1}$, which is positive, we have
\begin{equation}\label{eq4:lem:prox:pcalm}
\Vert y\Vert\leq \left(2\mu\Vert x\Vert^p+p\gamma\delta\mu\right)^{\frac{1}{p}}.
\end{equation}
In addition, from \eqref{eq1:lem:prox:pcalm} and \eqref{eq1b:lem:prox:pcalm}, we obtain
\begin{equation}\label{eq5:lem:prox:pcalm}
\gf(y)\leq \frac{1}{p\gamma}\Vert x\Vert^p+\delta.
\end{equation}
Now, we define $C:=\{y\in\R^n: \Vert y\Vert\leq \varepsilon, \gf(y)\leq \varepsilon\}$,
which is bounded due to $\Vert y\Vert\leq \varepsilon$ and closed due to the lower semicontinuity of $\gf$, making it a compact set. Let us choose small $\delta>0$ and $\varrho>0$ such that
\begin{align}\label{eq6a:lem:prox:pcalm}
\left(2\mu\varrho^p+p\gamma\delta\mu\right)^{\frac{1}{p}}< \varepsilon;
\quad
\frac{1}{p\gamma}\varrho^p+\delta< \varepsilon;\quad
\frac{\left(1+\left(2\mu\right)^{\frac{1}{p}}\right)^{p-1}}{\gamma}\varrho^{p-1}< \varepsilon.
\end{align}
 Define the neighborhood $U:=\{x\in\R^n: \Vert x\Vert< \varrho\}$.  
 Let $x\in U$ and $y$ satisfy \eqref{eq1b:lem:prox:pcalm}. Then, by \eqref{eq4:lem:prox:pcalm} and the first inequality in \eqref{eq6a:lem:prox:pcalm}, we have $\Vert y\Vert<\varepsilon$, and by \eqref{eq5:lem:prox:pcalm} and the second inequality in \eqref{eq6a:lem:prox:pcalm}, we also have
$\gf(y)<\varepsilon$. Consequently, if $x\in U$ and $y$ satisfies \eqref{eq1b:lem:prox:pcalm}, then $y\in C$. Specifically,
if $x\in U$, then
\[\prox{\gf}{\gamma}{p}(x)=\argmin{y\in C} \left(\gf(y)+\frac{1}{p\gamma}\Vert x- y\Vert^p\right).\]
Since $\gf$ is lsc and $C$ is compact,  $\prox{\gf}{\gamma}{p}(x)\neq \emptyset$  for any  $x\in U$.
Assume that  $x\in U$ and $y\in \prox{\gf}{\gamma}{p}(x)$. Then, $y$ satisfies \eqref{eq1b:lem:prox:pcalm} for any $\delta>0$. Thus, by letting $\delta\downarrow 0$ and using \eqref{eq4:lem:prox:pcalm}, we get
$\Vert y\Vert\leq \left(2\mu\right)^{\frac{1}{p}}\Vert x\Vert$, i.e., $\Vert x - y\Vert\leq \left(1+\left(2\mu\right)^{\frac{1}{p}}\right)\Vert x\Vert$. Moreover, by the third inequality in \eqref{eq6a:lem:prox:pcalm}, we obtain
\[
 \frac{1}{\gamma}\Vert x-y\Vert^{p-1}\leq \frac{\left(1+\left(2\mu\right)^{\frac{1}{p}}\right)^{p-1}}{\gamma}\Vert x\Vert^{p-1}< \varepsilon,
\]
giving our desired result.
\end{proof}
%%%%%%%%%%%%%%%%%%%%%%%%%%%%%%%%%%%%%%%%%%%%%%%%%%%%%%%
The following corollary gives conditions ensuring the uniform boundedness of HOPE under prox-boundedness, without requiring $p$-calmness.

\begin{corollary}[Uniform boundedness of HOPE under prox-boundedness]\label{cor:prox:pcalm}
Let $p>1$ and let $\gf:\R^n \to \Rinf$ be a proper lsc function that is high-order prox-bounded with threshold $\gamma^{\gf,p}$ and $\gf(\ov{x})=0$ for $\ov{x}=0$. Then, for each $\widehat{\gamma}\in (0,\gamma^{\gf,p})$ and sufficiently small 
$\gamma \in \left(0, \frac{2^{1-p}}{l p}\right)$, where $\ell:=\frac{2^{p-1}}{p\widehat{\gamma}}$, and any $\varepsilon > 0$, there exists a neighborhood $U$ of $\ov{x}$ such that for any $x \in U$, we have $\prox{\gf}{\gamma}{p}(x) \neq \emptyset$. Moreover, if $y \in \prox{\gf}{\gamma}{p}(x)$, then
\[
\Vert y \Vert < \varepsilon, \quad \gf(y) < \varepsilon, \quad \text{and} \quad \frac{1}{\gamma} \Vert x - y \Vert^{p-1} < \varepsilon.
\]
\end{corollary}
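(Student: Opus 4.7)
The plan is to reduce the corollary to an almost verbatim rerun of the proof of Lemma~\ref{lem:prox:pcalm} after replacing the $p$-calmness hypothesis by the weaker lower bound furnished by high-order prox-boundedness.

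First, I would invoke the implication $\ref{lemma:charac:sprox:a}\Rightarrow \ref{lemma:charac:sprox:b}$ of Proposition~\ref{lemma:charac:sprox} (equivalently, the finiteness of $\fgam{\gf}{p}{\widehat\gamma}(0)$ guaranteed by Fact~\ref{th:level-bound+locally uniform}~\ref{level-bound+locally uniform:proxnonemp}) to produce a constant $\ell_0 \in \R$, necessarily satisfying $\ell_0 \leq \gf(0) = 0$, such that
\[
\gf(y) + \ell \Vert y\Vert^p \geq \ell_0, \qquad \forall y \in \R^n,
\]
with $\ell = \frac{2^{p-1}}{p\widehat\gamma}$. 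This plays the role of the $p$-calmness inequality $\gf(y) + M\Vert y\Vert^p > 0$ used in the proof of Lemma~\ref{lem:prox:pcalm}, except that the right-hand side is now the non-positive constant $\ell_0$ in place of $0$.

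Next, I would mimic that proof line by line. For $x \in \R^n$ and $\delta > 0$, pick $y$ with $\gf(y) + \frac{1}{p\gamma}\Vert x - y\Vert^p \leq \fgam{\gf}{p}{\gamma}(x) + \delta \leq \frac{1}{p\gamma}\Vert x\Vert^p + \delta$, where the second inequality uses $\gf(0) = 0$. Substituting $\gf(y) \geq \ell_0 - \ell\Vert y\Vert^p$ on the left-hand side and applying Fact~\ref{lemma:ineq:inequality p}~\ref{lemma:ineq:inequality p:ineq1} with $\lambda = 1/2$ to $\Vert x-y\Vert^p$, I would arrive at
\[
(2^{1-p} - \ell p\gamma)\Vert y\Vert^p \leq 2\Vert x\Vert^p + p\gamma(\delta - \ell_0),
\]
which, for any $\gamma < \frac{2^{1-p}}{\ell p}$, yields $\Vert y\Vert^p \leq 2\mu\Vert x\Vert^p + p\gamma\mu(\delta - \ell_0)$ with $\mu := (2^{1-p} - \ell p\gamma)^{-1}$. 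Dropping the regularizer from the $\delta$-inequality also gives $\gf(y) \leq \frac{1}{p\gamma}\Vert x\Vert^p + \delta$, while $\Vert x-y\Vert$ is controlled via the triangle inequality $\Vert x - y\Vert \leq \Vert x\Vert + \Vert y\Vert$ combined with the previous bound.

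The single new difficulty compared to Lemma~\ref{lem:prox:pcalm} is the additive constant $-\ell_0 \geq 0$, which does not vanish when only $x \to 0$ and $\delta \to 0$. This is precisely why the statement insists on $\gamma$ being \emph{sufficiently small}: since $\mu$ remains uniformly bounded (tending to $2^{p-1}$ as $\gamma\downarrow 0$), the coefficient $p\gamma\mu$ in front of $-\ell_0$ is of order $\gamma$ and can therefore be driven to zero. Concretely, given $\varepsilon > 0$, I would first pick $\gamma$ small enough that $p\gamma\mu(-\ell_0)$ contributes less than $\varepsilon^p/2$ to the bound on $\Vert y\Vert^p$, with analogous effects on the bounds for $\gf(y)$ and $\frac{1}{\gamma}\Vert x-y\Vert^{p-1}$, and then shrink the radius $\varrho$ of $U = \mb(0;\varrho)$ and $\delta$ so that the remaining $\Vert x\Vert$- and $\delta$-dependent terms fall below the required thresholds, exactly mirroring the choice \eqref{eq6a:lem:prox:pcalm}. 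The nonemptiness of $\prox{\gf}{\gamma}{p}(x)$ on $U$ then follows from the same compactness-plus-lower-semicontinuity argument used in Lemma~\ref{lem:prox:pcalm}.
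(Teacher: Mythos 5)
Your proposal is correct and follows essentially the same route as the paper's own proof: the paper likewise invokes Proposition~\ref{lemma:charac:sprox} to replace $p$-calmness by the bound $\gf(\cdot)+\ell\Vert\cdot\Vert^p\geq\ell_0$ with $\ell_0\leq 0$, substitutes $M=\ell$, reruns the proof of Lemma~\ref{lem:prox:pcalm} with the extra term $p\gamma\mu(\delta-\ell_0)$, and imposes the additional smallness condition on $\gamma$ exactly as you describe (your observation that $\mu$ stays bounded as $\gamma\downarrow 0$ is in fact slightly more explicit than the paper). Note only that the paper, like you, spells out just the adjusted analogue of the first inequality in \eqref{eq6a:lem:prox:pcalm} and leaves the remaining two bounds implicit; your phrase ``analogous effects'' on $\tfrac{1}{\gamma}\Vert x-y\Vert^{p-1}$ glosses over the fact that the nonvanishing term $p\gamma\mu(-\ell_0)$ makes that third estimate genuinely more delicate than in Lemma~\ref{lem:prox:pcalm}, but this is a feature of the paper's argument as well, not a deviation on your part.
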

\begin{proof}
By Proposition~\ref{lemma:charac:sprox}, for each $\widehat{\gamma} \in (0, \gamma^{\gf, p})$, with setting $\ell:=\frac{2^{p-1}}{p\widehat{\gamma}}$, there exists $\ell_0 \in \mathbb{R}$ such that $\gf(\cdot) + \ell\| \cdot \|^p$ is bounded from below by $\ell_0$ on $\mathbb{R}^n$. Given that $\gf(\ov{x}) = 0$, we have $\ell_0 \leq 0$.
To follow the proof of Lemma~\ref{lem:prox:pcalm}, we substitute $M = \ell$, choose $\gamma \in \left(0, \frac{2^{1-p}}{Mp}\right)$, and add $\ell_0$ to the left-hand side of \eqref{eq2:lem:prox:pcalm} to derive an inequality similar to \eqref{eq4:lem:prox:pcalm} as
\[\|y\| \leq \left(2 \mu \|x\|^p + p \gamma \mu (\delta - \ell_0)\right)^{\frac{1}{p}}.\]
Note that $\delta - \ell_0 \geq 0$ for any $\delta>0$. 
We must then choose small $\delta > 0$, $\varrho > 0$, and $\gamma > 0$ such that, analogous to the first inequality in \eqref{eq6a:lem:prox:pcalm},
\begin{equation}\label{eq:cor:prox:pcalm}
    \left(2 \mu \varrho^p + p \gamma \mu (\delta - \ell_0)\right)^{\frac{1}{p}} < \varepsilon.
\end{equation}
Note that, in addition to choosing $\gamma \in \left(0, \frac{2^{1-p}}{M p}\right)$, $\gamma$ must be sufficiently small such that \eqref{eq:cor:prox:pcalm} holds.
    
\end{proof}
%%%%%%%%%%%%%%%%%%%%%%%%%%%%%%%%%%%%%%%%%%%%%%%%%%%%%%%

The following remark outlines assumptions used throughout this section and Subsection~\ref{subsec:on weak}.

\begin{remark}\label{rem:regardingthsdiff}
In Theorems~\ref{th:dif:proxreg12}~and~\ref{th:dif:proxreg}, as well as 
Theorems~\ref{th:dif:proxreg12:weaksm}~and~\ref{th:dif:proxreg:weak}, we formulate our results around $\ov{x} = 0$, assuming that $\ov{x}$ is a $p$-calm point of $\gf$ with constant $M > 0$ and $\gf(\ov{x}) = 0$. We also select $\gamma < \frac{2^{1-p}}{Mp} \leq \frac{1}{Mp}$. 
Thus, by Theorem~\ref{lem:progpcalm}~\ref{lem:progpcalm:critic2}, we have $\ov{x} \in \bs{\rm Fix}(\prox{\gf}{\gamma}{p})$, which, by Fact~\ref{prop:relcrit}, implies that $\ov{x} \in \bs{\rm Mcrit}(\gf)$, i.e., $0\in \partial \gf(\ov{x})$. This allows us to assume $q$-prox-regularity with $q \geq 2$ at $\ov{x} = 0$ for $\ov{\zeta} = 0 \in \partial \gf(\ov{x})$.
\end{remark}

First, we investigate the differentiability of HOME under $q$-prox-regularity and $p$-calmness for $p \in (1,2]$. 
\begin{theorem}[Differentiability of HOME under $q$-prox-regularity for $q\geq 2$ and $p\in(1,2\text{]}$]\label{th:dif:proxreg12}
Let $p\in (1,2]$ and $\gf: \R^n\to \Rinf$ be a proper lsc function. Suppose $\ov{x} = 0$ is a $p$-calm point of $\gf$ with constant $M > 0$ and $\gf(\ov{x}) = 0$.
Then, for each $\gamma\in \left(0, \frac{2^{1-p}}{Mp}\right)$ when $p\in (1,2)$ and $\gamma\in \left(0, \bs\min\left\{\frac{1}{4M}, \frac{1}{\rho}\right\}\right)$ when $p=2$, under the assumption that $\gf$ is $q$-prox-regular with $q\geq 2$ at $\ov{x}$ for $\ov{\zeta}=0\in\partial \gf(\ov{x})$ and $\rho>0$, there exists a neighbourhood $U$ of $\ov{x}$ such that $\prox{\gf}{\gamma}{p}$ is single-valued and continuous on $U$, and $\fgam{\gf}{p}{\gamma}\in \mathcal{C}^{1}(U)$.
\end{theorem}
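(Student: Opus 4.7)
The approach is to invoke Proposition~\ref{th:diffcharact}, which reduces the claim to showing that $\prox{\gf}{\gamma}{p}$ is nonempty-valued, single-valued, and continuous on some neighborhood $U$ of $\ov{x}=0$. Nonemptiness on such a $U$, together with the three uniform bounds $\Vert y\Vert<\varepsilon$, $\gf(y)<\varepsilon$, and $\tfrac{1}{\gamma}\Vert x-y\Vert^{p-1}<\varepsilon$ for every $y\in\prox{\gf}{\gamma}{p}(x)$, follows immediately from Lemma~\ref{lem:prox:pcalm} applied at the $p$-calm point $\ov{x}$. The plan is to first fix $\varepsilon>0$ smaller than the $q$-prox-regularity parameter at $(\ov{x},0)$, and then, if necessary, shrink $\varepsilon$ further so as to force a contradiction from the hypothesis $y_1\neq y_2$ for any two candidate elements of $\prox{\gf}{\gamma}{p}(x)$.

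For single-valuedness, the first-order optimality condition for the HOPE minimization yields $\zeta_i:=\tfrac{1}{\gamma}\Vert x-y_i\Vert^{p-2}(x-y_i)\in\partial\gf(y_i)$, with $\Vert\zeta_i\Vert<\varepsilon$ by the third bound above. All three triggering conditions in Definition~\ref{def:pprox-regular} therefore hold at both $(y_1,\zeta_1)$ and $(y_2,\zeta_2)$, so applying the defining inequality with $x'=y_2$ in one and $x'=y_1$ in the other and summing gives the hypomonotonicity-type estimate
\[
\langle \zeta_1-\zeta_2,\,y_2-y_1\rangle \leq \rho\Vert y_1-y_2\Vert^q.
\]
A matching lower bound on the same inner product then closes the argument. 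Setting $a:=x-y_1$ and $b:=x-y_2$, so that $a-b=y_2-y_1$ and $\zeta_1-\zeta_2=\tfrac{1}{\gamma}(\Vert a\Vert^{p-2}a-\Vert b\Vert^{p-2}b)$, the third bound from Lemma~\ref{lem:prox:pcalm} places $a,b$ inside $\mb(0;r)$ for $r=(\gamma\varepsilon)^{1/(p-1)}$. For $p\in(1,2)$, Lemma~\ref{lem:findlowbounknu:lemma}~\ref{lem:findlowbounknu:lemma:e2} then yields $\langle \zeta_1-\zeta_2,y_2-y_1\rangle\geq \tfrac{\kappa_p r^{p-2}}{\gamma}\Vert y_1-y_2\Vert^2$; combining with the upper bound leaves, under $y_1\neq y_2$, the inequality $\tfrac{\kappa_p r^{p-2}}{\gamma}\leq \rho\Vert y_1-y_2\Vert^{q-2}$. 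Since $p-2<0$, the coefficient $r^{p-2}=(\gamma\varepsilon)^{(p-2)/(p-1)}$ blows up as $\varepsilon\downarrow 0$, while $\Vert y_1-y_2\Vert^{q-2}\leq (2\varepsilon)^{q-2}$ stays bounded for $q=2$ and vanishes for $q>2$, yielding the desired contradiction once $\varepsilon$ is taken small enough. For $p=2$ the computation collapses to $\langle \zeta_1-\zeta_2,y_2-y_1\rangle=\tfrac{1}{\gamma}\Vert y_1-y_2\Vert^2$, and the resulting $\tfrac{1}{\gamma}\leq \rho\Vert y_1-y_2\Vert^{q-2}$ is contradicted by the assumption $\gamma<1/\rho$ when $q=2$ and by shrinking $\varepsilon$ when $q>2$.

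Once single-valuedness on $U$ is established, continuity of $\prox{\gf}{\gamma}{p}$ is immediate from the outer semicontinuity stated in Fact~\ref{th:level-bound+locally uniform}~\ref{level-bound+locally uniform2:conv}, since a single-valued outer-semicontinuous mapping is continuous, and Proposition~\ref{th:diffcharact} then delivers $\fgam{\gf}{p}{\gamma}\in\mathcal{C}^1(U)$ together with the gradient formula. The delicate step is the lower bound in the regime $p\in(1,2)$: the map $a\mapsto \Vert a\Vert^{p-2}a$ is singular at the origin, and the crude strong-convexity estimate for $\tfrac{1}{p}\Vert\cdot\Vert^p$ is simply unavailable. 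The trick is that the very same bound $\tfrac{1}{\gamma}\Vert x-y\Vert^{p-1}<\varepsilon$ which forces $a,b$ to stay close to the origin simultaneously produces an explosive coefficient $r^{p-2}$, and this explosion is exactly what overpowers the $\rho$-term from $q$-prox-regularity.
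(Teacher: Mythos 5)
Your proposal is correct: every step checks out, including the sign of the hypomonotonicity estimate obtained by summing the two $q$-prox-regularity inequalities, the applicability of Lemma~\ref{lem:prox:pcalm} (which supplies all three triggering conditions of Definition~\ref{def:pprox-regular} at $(y_i,\zeta_i)$), the blow-up of $r^{p-2}$ for $p\in(1,2)$ against the bounded (for $q=2$) or vanishing (for $q>2$) factor $\|y_1-y_2\|^{q-2}$, and the use of $\gamma<1/\rho$ in the case $p=q=2$. However, your route differs from the paper's in two respects. First, the paper works with two \emph{distinct} base points $x_1\neq x_2$ and pushes the same monotonicity comparison all the way to the explicit H\"older estimate \eqref{lochol:eq11}, $\Vert y_2-y_1\Vert\leq C\Vert x_2-x_1\Vert^{1/2}$, which delivers single-valuedness \emph{and} continuity simultaneously; you instead prove single-valuedness at a fixed $x$ by contradiction and then recover continuity qualitatively from the outer semicontinuity and boundedness in Fact~\ref{th:level-bound+locally uniform}~\ref{level-bound+locally uniform2:conv} (this is legitimate, since that fact supplies the local boundedness needed to upgrade outer semicontinuity of a single-valued map to continuity). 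Second, the paper downgrades the $q$-power error term to a quadratic one early on (via $\Vert x'-x\Vert<1$), whereas you retain the exponent $q$ and exploit $q>2$ directly. What your argument buys is a lighter, more transparent dichotomy ($q=2$ versus $q>2$, $p<2$ versus $p=2$); what it gives up is the quantitative modulus of continuity of $\prox{\gf}{\gamma}{p}$, which the paper reuses verbatim in the weak-smoothness result Theorem~\ref{th:dif:proxreg12:weaksm}. For the statement as posed, your proof is complete.
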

\begin{proof}
Suppose $\gf: \R^n\to \Rinf$ is $q$-prox-regular at $\ov{x}=0$ for $\ov{\zeta}=0$ with some $\varepsilon>0$ and $\rho>0$.
Since the $q$-prox-regularity at $\ov{x}$ for $\ov{\zeta}$ is preserved under shrinking the neighborhood, we may, after possibly replacing $\varepsilon$ by a smaller value and relabeling it as $\varepsilon$, assume that 
\[2\varepsilon<\bs\min\left\{\left(\frac{\kappa_p}{\gamma\rho}\right)^{\frac{1}{2-p}},1\right\},\]
when $p\in (1, 2)$ and $\varepsilon<\frac{1}{2}$ when $p=2$, where $\kappa_p$ is introduced in \eqref{eq:formofKs:def}. As such, for each $x'\in \mb(\ov{x}; \varepsilon)$,
\begin{align}\label{eq1ex1:th:dif:proxreg}
\gf(x')\geq \gf(x)+\langle \zeta, x'-x\rangle-\frac{\rho}{2}\Vert x'-x\Vert^q
\overset{(i)}{\geq} \gf(x)+\langle \zeta, x'-x\rangle-\frac{\rho}{2}\Vert x'-x\Vert^2,
\end{align}
when $\Vert x\Vert <\varepsilon$, $\zeta\in \partial \gf(x)$, $\Vert \zeta\Vert< \varepsilon$, and $\gf(x)<\varepsilon$.
Note that the inequality $(i)$ follows from the fact that $\Vert x'-x\Vert\leq\Vert x'-\ov{x}\Vert+\Vert x- \ov{x}\Vert<2\varepsilon<1$ as $\varepsilon<\frac{1}{2}$. Since $q\geq 2$ and
$\Vert x'-x\Vert<1$, we have $\Vert x'-x\Vert^q \leq \Vert x'-x\Vert^2$.
By Lemma~\ref{lem:prox:pcalm}, there exists a neighbourhood $U\subseteq \mb(0; \varepsilon)$ such that for each $x\in U$, $\prox{\gf}{\gamma}{p}(x)\neq \emptyset$. Additionally, if $y\in \prox{\gf}{\gamma}{p}(x)$, then
$\frac{1}{\gamma}\Vert x-y\Vert^{p-2}(x-y)\in \partial \gf(y)$ and 
\[
\Vert y\Vert< \varepsilon,\quad \gf(y)< \varepsilon,\quad \left\Vert\frac{1}{\gamma}\Vert x-y\Vert^{p-2}(x-y)\right\Vert=\frac{1}{\gamma}\Vert x-y\Vert^{p-1}< \varepsilon.
\]
Considering $x_i\in U$ and $y_i\in \prox{\gf}{\gamma}{p}(x_i)$, $i=1,2$, and \eqref{eq1ex1:th:dif:proxreg}, we come to
\begin{equation}\label{eq1:th:dif:proxreg}
\gf(y_2)\geq \gf(y_1)+\frac{1}{\gamma}\Vert x_1-y_1\Vert^{p-2}\langle x_1-y_1 , y_2-y_1\rangle-\frac{\rho}{2}\Vert y_2-y_1\Vert^2,
\end{equation}
and
\begin{equation}\label{eq2:th:dif:proxreg}
\gf(y_1)\geq \gf(y_2)+\frac{1}{\gamma}\Vert x_2-y_2\Vert^{p-2}\langle x_2-y_2 , y_1-y_2\rangle-\frac{\rho}{2}\Vert y_1-y_2\Vert^2.
\end{equation}
Adding \eqref{eq1:th:dif:proxreg} and \eqref{eq2:th:dif:proxreg}, results in
\begin{align*}
\frac{1}{\gamma} \langle \Vert x_2-y_2\Vert^{p-2} (x_2-y_2)- \Vert x_1-y_1\Vert^{p-2} (x_1-y_1) , y_2-y_1\rangle&\geq  -\rho\Vert y_1-y_2\Vert^2.
\end{align*}
Since $\Vert x_i -y_i\Vert\leq r_1:=2\varepsilon$ ($i=1,2$),  
Lemma~\ref{lem:findlowbounknu:lemma}~$\ref{lem:findlowbounknu:lemma:e2}$ yields
 with $r_2:=\kappa_p r_1^{p-2}$, 
\begin{align*}
\langle \Vert x_2-y_2\Vert^{p-2}(x_2-y_2) - \Vert x_1-y_1\Vert^{p-2}(x_1-y_1),& (x_2-y_2)-(x_1-y_1)\rangle
\\&\geq  r_2\Vert (x_2-y_2) - (x_1-y_1)\Vert^{2}\\
&\geq r_2\big\vert\Vert y_2-y_1\Vert -\Vert x_2-x_1\Vert\big\vert^{2}.
\end{align*}
Note that
$\Vert y_2-y_1\Vert\leq r_1$ and if $a, b\in \R$, then $\vert a -b\vert^{2} -a^{2}\geq -2\vert a\vert \vert b\vert$.
Setting $a=\Vert y_2-y_1\Vert$ and $b=\Vert x_2-x_1\Vert$, we have
\[\big\vert\Vert y_2-y_1\Vert -\Vert x_2-x_1\Vert\big\vert^{2}-\Vert y_2-y_1\Vert^{2}\geq -2 r_1 \Vert x_2-x_1\Vert,\]
i.e.,
\begin{align*}
  &r_2\left(2r_1 \Vert x_2-x_1\Vert-\Vert y_2-y_1\Vert^{2}\right) 
  \\&\hspace{2cm}\geq
\langle \Vert x_2-y_2\Vert^{p-2}(x_2-y_2) - \Vert x_1-y_1\Vert^{p-2}(x_1-y_1), (x_1-y_1)-(x_2-y_2)\rangle
\\
&\hspace{2cm}=\langle \Vert x_2-y_2\Vert^{p-2}(x_2-y_2) - \Vert x_1-y_1\Vert^{p-2}(x_1-y_1), x_1-x_2\rangle\\
&\hspace{2cm}~~~+\langle \Vert x_2-y_2\Vert^{p-2}(x_2-y_2) - \Vert x_1-y_1\Vert^{p-2}(x_1-y_1), y_2-y_1\rangle\\
&\hspace{2cm}\geq \langle \Vert x_2-y_2\Vert^{p-2}(x_2-y_2) - \Vert x_1-y_1\Vert^{p-2}(x_1-y_1), x_1-x_2\rangle-\rho\gamma\Vert y_2 - y_1\Vert^{2}
\\
&\hspace{2cm}\geq -\left(\Vert x_2-y_2\Vert^{p-1}+\Vert x_1-y_1\Vert^{p-1}\right)\Vert x_ 2-x_1\Vert-\rho\gamma\Vert y_2 - y_1\Vert^{2}\\
&\hspace{2cm}\geq-2r_1^{p-1}\Vert x_2-x_1\Vert-\rho\gamma\Vert y_2 - y_1\Vert^{2},
\end{align*}
leading to
$(r_2-\rho\gamma)\Vert y_2 - y_1\Vert^{2}\leq  (2r_1r_2+2r_1^{p-1}) \Vert x_2-x_1\Vert$.
Since $r_2-\rho\gamma>0$, this ensures
\begin{equation}\label{lochol:eq11}
\Vert y_2 - y_1\Vert\leq \left(\frac{2r_1r_2+2r_1^{p-1}}{r_2-\rho\gamma}\right)^{\frac{1}{2}} \Vert x_2-x_1\Vert^{\frac{1}{2}}.
\end{equation}
From \eqref{lochol:eq11}, the single-valuedness and continuity of $\prox{\gf}{\gamma}{p}(x)$ for any $x\in U$ are obtained. Moreover,
by invoking Proposition~\ref{th:diffcharact}, $\fgam{\gf}{p}{\gamma}\in \mathcal{C}^{1}(U)$.
\end{proof}

%%%%%%%%%%%%%%%%%%%%%%%%%%%%%%%%%%%%%%%%%%%%%%%%%%%%%%%%%%%%%%%%%
%%%%%%%%%%%%%%%%%%%%%%%%%%%%%%%%%%%%%%%%%%%%%%%%%%%%%%%%%%%%%%%%%

\begin{theorem}[Differentiability of HOME under $q$-prox-regularity for $2\leq p\leq q$]\label{th:dif:proxreg}
Let  $p\geq 2$ and $\gf: \R^n\to \Rinf$ be a proper lsc function. Suppose $\ov{x} = 0$ is a $p$-calm point of $\gf$ with constant $M > 0$ and $\gf(\ov{x}) = 0$.
Then, for each $\gamma\in \left(0, \bs\min\left\{\frac{2^{1-p}}{Mp},\frac{1}{\rho 2^{2p-3}}\right\}\right)$,  under the assumption that $\gf$ is $q$-prox-regular with $q\geq p$ at $\ov{x}$ for $\ov{\zeta}=0\in\partial \gf(\ov{x})$ and $\rho>0$,
there exists a neighborhood $U$ of $\ov{x}$ such that 
$\prox{\gf}{\gamma}{p}$ is single-valued and continuous on $U$, and $\fgam{\gf}{p}{\gamma}\in \mathcal{C}^{1}(U)$. 
\end{theorem}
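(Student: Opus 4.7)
The plan is to follow the same blueprint as Theorem~\ref{th:dif:proxreg12}, but to replace the local-linearization machinery of Lemma~\ref{lem:findlowbounknu:lemma}~\ref{lem:findlowbounknu:lemma:e2} (which is designed for $p\in(1,2]$) by the standard monotonicity estimate in Fact~\ref{lemma:ineq:inequality p}~\ref{lemma:ineq:inequality p:ineq3}, valid for $p\geq 2$. As in Remark~\ref{rem:regardingthsdiff}, the hypothesis $\gamma<\tfrac{2^{1-p}}{Mp}$ together with Theorem~\ref{lem:progpcalm}~\ref{lem:progpcalm:critic2} and Fact~\ref{prop:relcrit} gives $\ov{x}=0\in\bs{\rm Fix}(\prox{\gf}{\gamma}{p})\subseteq\bs{\rm Mcrit}(\gf)$, so that $q$-prox-regularity at $\ov{x}$ for $\ov{\zeta}=0$ is legitimate. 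After possibly shrinking the radius $\varepsilon$ of the prox-regular neighborhood, I would require $2\varepsilon<1$, so that $\|y_2-y_1\|<1$ for any candidate proximal points, and consequently $\|y_2-y_1\|^q\leq\|y_2-y_1\|^p$ (here the assumption $q\geq p$ is used crucially).

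Next, Lemma~\ref{lem:prox:pcalm} provides a neighborhood $U$ of $\ov{x}$ on which $\prox{\gf}{\gamma}{p}(x)\neq\emptyset$ and every selection $y\in\prox{\gf}{\gamma}{p}(x)$ satisfies $\|y\|<\varepsilon$, $\gf(y)<\varepsilon$, and $\tfrac{1}{\gamma}\|x-y\|^{p-1}<\varepsilon$. The first-order optimality condition for the proximal subproblem yields $\tfrac{1}{\gamma}\|x-y\|^{p-2}(x-y)\in\partial\gf(y)$, which together with the bounds above makes $y$ and this subgradient admissible in the $q$-prox-regularity inequality. Taking two points $x_1,x_2\in U$ with selections $y_i\in\prox{\gf}{\gamma}{p}(x_i)$, I write the $q$-prox-regularity inequality twice (at $y_1$ evaluated at $y_2$, and vice versa) and add them; after multiplying by $\gamma$ and using $\|y_2-y_1\|^q\leq\|y_2-y_1\|^p$, the result is
\[
\langle A,\,y_2-y_1\rangle\;\geq\;-\rho\gamma\|y_2-y_1\|^p,
\qquad A:=\|x_2-y_2\|^{p-2}(x_2-y_2)-\|x_1-y_1\|^{p-2}(x_1-y_1).
\]

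Now the decomposition $y_2-y_1=(x_2-x_1)-[(x_2-y_2)-(x_1-y_1)]$ converts the above bound into an upper estimate for $\langle A,(x_2-y_2)-(x_1-y_1)\rangle$, namely
\[
\langle A,(x_2-y_2)-(x_1-y_1)\rangle\;\leq\;\|A\|\,\|x_2-x_1\|+\rho\gamma\|y_2-y_1\|^p.
\]
The key step is to lower-bound the left-hand side by Fact~\ref{lemma:ineq:inequality p}~\ref{lemma:ineq:inequality p:ineq3}, giving $\langle A,(x_2-y_2)-(x_1-y_1)\rangle\geq 2^{2-p}\|(x_2-y_2)-(x_1-y_1)\|^p$, and then apply Fact~\ref{lemma:ineq:inequality p}~\ref{lemma:ineq:inequality p:ineq1} with $\lambda=\tfrac12$ to get
\[
\|(x_2-y_2)-(x_1-y_1)\|^p\;\geq\;2^{1-p}\|y_2-y_1\|^p-\|x_2-x_1\|^p.
\]
Bounding $\|A\|\leq 2(2\varepsilon)^{p-1}$ using the uniform bound from Lemma~\ref{lem:prox:pcalm}, and combining the two inequalities, yields
\[
\bigl(2^{3-2p}-\rho\gamma\bigr)\|y_2-y_1\|^p\;\leq\;2(2\varepsilon)^{p-1}\|x_2-x_1\|+2^{2-p}\|x_2-x_1\|^p.
\]

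The threshold $\gamma<\tfrac{1}{\rho\,2^{2p-3}}=\tfrac{2^{3-2p}}{\rho}$ in the hypothesis is precisely what ensures $2^{3-2p}-\rho\gamma>0$, so dividing through gives a local Hölder estimate $\|y_2-y_1\|\leq C_1\|x_2-x_1\|^{1/p}+C_2\|x_2-x_1\|$ on $U$. This delivers single-valuedness and continuity of $\prox{\gf}{\gamma}{p}$ on $U$, after which Proposition~\ref{th:diffcharact} concludes that $\fgam{\gf}{p}{\gamma}\in\mathcal{C}^1(U)$. The main obstacle is the careful bookkeeping of constants and powers at the final algebraic step, and particularly the verification that the chosen $\gamma$-range produces a strictly positive coefficient on $\|y_2-y_1\|^p$; the rest essentially parallels the argument for $p\in(1,2]$, with Fact~\ref{lemma:ineq:inequality p}~\ref{lemma:ineq:inequality p:ineq3} replacing Lemma~\ref{lem:findlowbounknu:lemma}~\ref{lem:findlowbounknu:lemma:e2}.
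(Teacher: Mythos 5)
Your proposal is correct and follows essentially the same route as the paper: add the two $q$-prox-regularity inequalities at $y_1,y_2$, use $\|y_2-y_1\|^q\leq\|y_2-y_1\|^p$, decompose via $y_2-y_1=(x_2-x_1)-[(x_2-y_2)-(x_1-y_1)]$, lower-bound with Fact~\ref{lemma:ineq:inequality p}~\ref{lemma:ineq:inequality p:ineq3}, and finish with Fact~\ref{lemma:ineq:inequality p}~\ref{lemma:ineq:inequality p:ineq1} and Proposition~\ref{th:diffcharact}. The only (immaterial) deviation is that you bound $\|A\|$ by the crude estimate $\|x_2-y_2\|^{p-1}+\|x_1-y_1\|^{p-1}\leq 2(2\varepsilon)^{p-1}$ rather than via Lemma~\ref{lem:findlowbounknu:lemma}~\ref{lem:findlowbounknu:lemma:e3} as the paper does; both yield the same positivity condition $\rho\gamma<2^{3-2p}$ and the same conclusion.
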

\begin{proof}
Suppose that $\gf: \R^n \to \Rinf$ is $q$-prox-regular at $\ov{x} = 0$ for $\ov{\zeta} = 0$ with constants $\varepsilon > 0$ and $\rho > 0$. By shrinking $\varepsilon$ if necessary, such that $\varepsilon < \frac{1}{2}$, and following a similar approach to the proof of Theorem~\ref{th:dif:proxreg12}, there exists a neighborhood $U \subseteq \mb(0; \varepsilon)$ such that if $x_i \in U$ and $y_i \in \prox{\gf}{\gamma}{p}(x_i)$ for $i = 1,2$, then
 \begin{align}\label{eq1-3:th:dif:proxreg}
\langle \Vert x_1-y_1\Vert^{p-2} (x_1-y_1) - \Vert x_2-y_2\Vert^{p-2} (x_2-y_2) , y_1-y_2\rangle\geq -\gamma\rho \Vert y_1-y_2\Vert^q
\geq -\gamma\rho \Vert y_1-y_2\Vert^p.
\end{align}
Moreover, applying Lemmas~\ref{lemma:ineq:inequality p}~\ref{lemma:ineq:inequality p:ineq3}~and~\ref{lem:findlowbounknu:lemma}~\ref{lem:findlowbounknu:lemma:e3}, and setting $s = \frac{p}{p-1}$ yield
\begin{align*}
&\langle \Vert x_1-y_1\Vert^{p-2} (x_1-y_1) - \Vert x_2-y_2\Vert^{p-2} (x_2-y_2) , y_1-y_2\rangle
\\&~~~=\langle \Vert x_1-y_1\Vert^{p-2} (x_1-y_1) - \Vert x_2-y_2\Vert^{p-2} (x_2-y_2) , (x_2-y_2)-(x_1-y_1)-(x_2-x_1)\rangle
\\&~~~\leq -\left(\frac{1}{2}\right)^{p-2}\Vert (x_2-y_2)-(x_1-y_1)\Vert^p
\\&~~~~~~~~+\left\Vert \Vert x_1-y_1\Vert^{p-2} (x_1-y_1) - \Vert x_2-y_2\Vert^{p-2} (x_2-y_2)\right\Vert \Vert x_1-x_2\Vert
\\&~~~\leq -\left(\frac{1}{2}\right)^{p-2}\Vert (x_2-y_2)-(x_1-y_1)\Vert^p
\\&~~~~~~~~+ \frac{2(2\varepsilon)^{p-2}}{\kappa_s}\Vert (x_1-y_1)-(x_2- y_2)\Vert \Vert  x_1-x_2\Vert
\\&~~~\leq -\left(\frac{1}{2}\right)^{p-2}\Vert (x_2-y_2)-(x_1-y_1)\Vert^p+ \frac{2(2\varepsilon)^{p-2}}{\kappa_s}4\varepsilon \Vert  x_1-x_2\Vert
\\&~~~= -\left(\frac{1}{2}\right)^{p-2}\Vert (x_2-y_2)-(x_1-y_1)\Vert^p+ \frac{2^{p+1}\varepsilon^{p-1}}{\kappa_s} \Vert  x_1-x_2\Vert.
\end{align*}
Thus, from \eqref{eq1-3:th:dif:proxreg},
\begin{align*}
& -\gamma\rho \Vert y_1-y_2\Vert^p
\leq -\left(\frac{1}{2}\right)^{p-2}\Vert (x_2-y_2)-(x_1-y_1)\Vert^p+\frac{2^{p+1}\varepsilon^{p-1}}{\kappa_s}\Vert  x_2-x_1\Vert,
\end{align*}
i.e.,
\[
-\gamma\rho \Vert y_1-y_2\Vert^p+\left(\frac{1}{2}\right)^{p-2}\left(\left(\frac{1}{2}\right)^{p-1}\Vert y_2-y_1\Vert^p-\Vert  x_2-x_1\Vert^p\right)\leq \frac{2^{p+1}\varepsilon^{p-1}}{\kappa_s} \Vert  x_2-x_1\Vert,
\]
leading to
\begin{align*}
\left(\frac{1}{2}\right)^{p-1}\left(\left(\frac{1}{2}\right)^{p-2}-\gamma\rho 2^{p-1}\right)\Vert y_2-y_1\Vert^p&\leq \left(\frac{1}{2}\right)^{p-2}\Vert  x_2-x_1\Vert^p+\frac{2^{p+1}\varepsilon^{p-1}}{\kappa_s} \Vert  x_2-x_1\Vert\\
&\leq \left(\left(\frac{1}{2}\right)^{p-2}(2\varepsilon)^{p-1}+\frac{2^{p+1}\varepsilon^{p-1}}{\kappa_s}\right) \Vert  x_2-x_1\Vert.
\end{align*}
Since $\gamma<\frac{1}{\rho 2^{2p-3}}$, we have
\begin{align}\label{eq1g2:lochol:eq11}
\Vert y_2-y_1\Vert^p\leq \widehat{L}_p \Vert  x_2-x_1\Vert,
\end{align}
where
\begin{equation}\label{eq1g2:lochol:eq11:b}
    \widehat{L}_p:=\left(\frac{\left(\frac{1}{2}\right)^{p-2}(2\varepsilon)^{p-1}+\frac{2^{p+1}\varepsilon^{p-1}}{\kappa_s}}{\left(\frac{1}{2}\right)^{p-1}\left(\left(\frac{1}{2}\right)^{p-2}-
\gamma\rho 2^{p-1}\right)}\right).
\end{equation}
From \eqref{eq1g2:lochol:eq11}, the single-valuedness of $\prox{\gf}{\gamma}{p}(x)$ for any $x \in U$ follows. By invoking Proposition~\ref{th:diffcharact}, it follows that $\fgam{\gf}{p}{\gamma} \in \mathcal{C}^{1}(U)$.
\end{proof}

 Now, we can add another piece to the chain given in Fact~\ref{prop:relcrit}.
 
 \begin{corollary}[Extended relationships among reference points]\label{prop:relcrit2}
 Under the assumptions of Theorem~\ref{th:dif:proxreg12} or Theorem~\ref{th:dif:proxreg}, with an appropriate choice of $\gamma>0$, we have
\begin{align*}
    \argmin{x\in \R^n}\gf(x)&=\argmin{y\in \R^n} \fgam{\gf}{p}{\gamma}(y)\subseteq\bs{\rm Fcrit}(\fgam{\gf}{p}{\gamma})
    \subseteq \bs{\rm Mcrit}(\fgam{\gf}{p}{\gamma})
    \\
&\subseteq \bs{\rm Fix}(\prox{\gf}{\gamma}{p})=\bs{\rm{Zero}}(\nabla\fgam{\gf}{p}{\gamma}):= \{x\in \R^n\mid \nabla\fgam{\gf}{p}{\gamma}(x) =0\}
\subseteq\bs{\rm Fcrit}(\gf)\subseteq \bs{\rm Mcrit}(\gf).
\end{align*}
\end{corollary}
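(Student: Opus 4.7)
The plan is to leverage Fact~\ref{prop:relcrit}, which already supplies every inclusion in the stated chain except the middle equality $\bs{\rm Fix}(\prox{\gf}{\gamma}{p})=\bs{\rm{Zero}}(\nabla\fgam{\gf}{p}{\gamma})$. To make the latter set well-defined, I first select $\gamma$ small enough that it lies simultaneously in the admissible interval for Fact~\ref{prop:relcrit} and in the stricter interval prescribed by Theorem~\ref{th:dif:proxreg12} (when $p\in (1,2]$) or Theorem~\ref{th:dif:proxreg} (when $p\geq 2$). Such a $\gamma$ exists because Theorem~\ref{lem:progpcalm}~\ref{lem:progpcalm:calm} upgrades the $p$-calmness at $\ov{x}$ into high-order prox-boundedness of $\gf$, and the upper bounds in the differentiability theorems can be further shrunk below $\gamma^{\gf,p}$ without affecting either the $p$-calmness or the $q$-prox-regularity hypotheses.

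For this choice of $\gamma$, the applicable differentiability theorem furnishes a neighborhood $U$ of $\ov{x}$ on which $\prox{\gf}{\gamma}{p}$ is single-valued and $\fgam{\gf}{p}{\gamma}\in\mathcal{C}^1(U)$. Invoking the gradient formula of Proposition~\ref{th:diffcharact}, for each $x\in U$ with $y=\prox{\gf}{\gamma}{p}(x)$,
\[
\nabla\fgam{\gf}{p}{\gamma}(x)\;=\;\frac{1}{\gamma}\,\Vert x-y\Vert^{p-2}(x-y),
\]
and a two-way argument closes the remaining equality. If $x\in \bs{\rm Fix}(\prox{\gf}{\gamma}{p})\cap U$, then $y=x$ and the right-hand side vanishes, so $x\in\bs{\rm Zero}(\nabla\fgam{\gf}{p}{\gamma})$. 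Conversely, since $p>1$, the map $z\mapsto \Vert z\Vert^{p-2}z$ (with the convention $\tfrac{0}{0}=0$ at $z=0$) vanishes only at $z=0$, so $\nabla\fgam{\gf}{p}{\gamma}(x)=0$ forces $x=y=\prox{\gf}{\gamma}{p}(x)$.

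The only step that is not purely bookkeeping is confirming that the admissible $\gamma$-ranges of Fact~\ref{prop:relcrit} and the differentiability theorem have nonempty intersection while preserving every standing hypothesis; this is immediate since shrinking $\gamma$ does not affect $p$-calmness or $q$-prox-regularity. With the calibration settled, plugging the gradient formula into the definitions of $\bs{\rm Fix}$ and $\bs{\rm Zero}$ produces the new equality, and combining it with the chain of Fact~\ref{prop:relcrit} yields the full corollary.
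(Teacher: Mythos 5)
Your proposal is correct and coincides with the argument the paper leaves implicit: all inclusions are imported from Fact~\ref{prop:relcrit}, and the new equality $\bs{\rm Fix}(\prox{\gf}{\gamma}{p})=\bs{\rm{Zero}}(\nabla\fgam{\gf}{p}{\gamma})$ follows from the gradient formula of Proposition~\ref{th:diffcharact} together with the fact that $z\mapsto\Vert z\Vert^{p-2}z$ vanishes only at $z=0$. Your restriction of the two-way argument to the neighborhood $U$ supplied by Theorem~\ref{th:dif:proxreg12} or~\ref{th:dif:proxreg} is the right reading of the (locally stated) corollary, so no further repair is needed.
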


The following remark discusses reverse relations in Corollary~\ref{prop:relcrit2}
 \begin{remark}\label{rem:revofrel}
Let us consider the assumptions of Corollary~\ref{prop:relcrit2}.
\begin{enumerate}[label=(\textbf{\alph*}), font=\normalfont\bfseries, leftmargin=0.7cm]
\item \label{rem:revofrel:a}
 It is possible that $\bs{\rm Fix}(\prox{\gf}{\gamma}{p}) \nsubseteq  \argmin{x \in \R^n} \gf(x)$: For $p=2$, $\gamma=0.5$, and $\gf(x) = \cos(x)$, we have $0 \in \bs{\rm Fix}(\prox{\gf}{\gamma}{p})$, but $0 \notin \argmin{x \in \R^n} \gf(x)$.
  
  \item \label{rem:revofrel:b} It is also possible that $\bs{\rm Fcrit}(\gf) \nsubseteq \bs{\rm Fix}(\prox{\gf}{\gamma}{p})$: See \cite[Example~3.6]{Themelis18}.
 
  \end{enumerate}
  \end{remark}

%%%%%%%%%%%%%%%%%%%%%%%%%%%%%%%%%%%%%%%%%%%%%%%%%%%%%%%%%%%%%%%%%%%%%%%%%%%%%%%%%%%%
\subsection{{\bf Weak smoothness of HOME}}\label{subsec:on weak}
A function is called \textit{weakly smooth} if it is differentiable with H\"older continuous gradient; see, e.g., \cite{nesterov2015universal,ahookhosh2019accelerated}.
Here, we address the weak smoothness of $\fgam{\gf}{p}{\gamma}$ and the H\"{o}lder continuity of $\prox{\gf}{\gamma}{p}$ under the conditions of $q$-prox-regularity and $p$-calmness. 
In both Theorems~\ref{th:dif:proxreg12:weaksm}~and~\ref{th:dif:proxreg:weak} (see below), we assume that
$\gf$ is $q$-prox-regular with $q\geq 2$ in $\ov{x}=0$ for $\ov{\zeta}=0\in\partial \gf(\ov{x})$ with $\varepsilon<\frac{1}{2}$ and $\rho>0$. Regarding assumption $\varepsilon<\frac{1}{2}$, note that if a function is  $q$-prox-regular with some $\varepsilon>0$, it remains $q$-prox-regular  with any $0<\varepsilon'<\varepsilon$. Hence, without loss of generality, we impose the condition $\varepsilon<\frac{1}{2}$.

We first consider the case of $p$-calmness for $p \in (1,2]$.
\begin{theorem}[Weak smoothness of HOME under $q$-prox-regularity for $q\geq 2$ and $p\in(1,2\text{]}$]\label{th:dif:proxreg12:weaksm}
Let $p\in (1,2]$ and $\gf: \R^n\to \Rinf$ be a proper lsc function. Suppose $\ov{x} = 0$ is a $p$-calm point of $\gf$ with constant $M > 0$ and $\gf(\ov{x}) = 0$.
Then, for each $\gamma\in \left(0, \bs\min\left\{\frac{2^{1-p}}{Mp}, \frac{\kappa_p(2\varepsilon)^{p-2}}{\rho}\right\}\right)$, 
under the assumption that $\gf$ is $q$-prox-regular with $q\geq 2$ at $\ov{x}=0$ for $\ov{\zeta}=0\in\partial \gf(\ov{x})$ with $\varepsilon<\frac{1}{2}$ and $\rho>0$, 
there exists a neighborhood $U\subseteq \mb(\ov{x}; \varepsilon)$ such that
\begin{equation}\label{lochol:maineq:weaksm}
\Vert \prox{\gf}{\gamma}{p}(x_2) -  \prox{\gf}{\gamma}{p}(x_1) \Vert\leq L_p \Vert  x_2-x_1\Vert^{\frac{1}{q}}, \qquad \forall x_1, x_2\in U,
\end{equation}
and $\fgam{\gf}{p}{\gamma}\in\mathcal{C}^{1, \frac{p-1}{q}}_{\mathcal{L}_p}(U)$, i.e.,
 \begin{align}\label{locholof gra:maineq:weaksm}
 \left\Vert \nabla \fgam{\gf}{p}{\gamma}(x_2)-\nabla \fgam{\gf}{p}{\gamma}(x_1)\right\Vert \leq \mathcal{L}_p\Vert x_2-x_1\Vert^{\frac{p-1}{q}},\qquad \forall x_1, x_2\in U,
  \end{align}
  where $L_p:=\left(\frac{4\varepsilon(\kappa_p+1)}{\kappa_p-\rho\gamma(2\varepsilon)^{2-p}}\right)^{\frac{1}{q}}$ and 
  $\mathcal{L}_p:= \frac{2^{2-p}}{\gamma}\left((2\varepsilon)^{\frac{q-1}{q}}+L_p \right)^{p-1}$.
 \end{theorem}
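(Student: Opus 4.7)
The plan is to follow closely the scheme of the proof of Theorem~\ref{th:dif:proxreg12} but to \emph{retain} the exponent $q$ produced by $q$-prox-regularity, so as to extract the H\"older bound \eqref{lochol:maineq:weaksm} rather than the ``Lipschitz-in-$\sqrt{\,\cdot\,}$'' estimate obtained there. First, by Lemma~\ref{lem:prox:pcalm}, for $\gamma$ in the prescribed range there is a neighborhood $U\subseteq\mb(\ov{x};\varepsilon)$ on which $\prox{\gf}{\gamma}{p}$ is nonempty and every selection $y\in\prox{\gf}{\gamma}{p}(x)$ satisfies $\|y\|<\varepsilon$, $\gf(y)<\varepsilon$, and $\zeta:=\tfrac{1}{\gamma}\|x-y\|^{p-2}(x-y)\in\partial\gf(y)\cap\mb(0;\varepsilon)$. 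For $x_1,x_2\in U$ and $y_i\in\prox{\gf}{\gamma}{p}(x_i)$ ($i=1,2$), applying the $q$-prox-regularity inequality~\eqref{eq:def:qprox} symmetrically to the two pairs $(y_i,\zeta_i)$ and adding the two resulting inequalities yields
\[
\tfrac{1}{\gamma}\bigl\langle \|x_2-y_2\|^{p-2}(x_2-y_2)-\|x_1-y_1\|^{p-2}(x_1-y_1),\,y_2-y_1\bigr\rangle\geq -\rho\|y_2-y_1\|^q.
\]

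Next I lower-bound the \emph{same} vector difference paired against $(x_2-y_2)-(x_1-y_1)$ via Lemma~\ref{lem:findlowbounknu:lemma}~\ref{lem:findlowbounknu:lemma:e2} (with $r_1:=2\varepsilon$ and $r_2:=\kappa_p r_1^{p-2}$). Decomposing $(x_2-y_2)-(x_1-y_1)=(x_2-x_1)-(y_2-y_1)$, splitting the inner product into its two pieces, and using $|a-b|^2\geq a^2-2r_1 b$ with $a=\|y_2-y_1\|\leq r_1$ and $b=\|x_2-x_1\|$ exactly as in the proof of Theorem~\ref{th:dif:proxreg12}, I arrive at
\[
r_2\|y_2-y_1\|^2-\rho\gamma\|y_2-y_1\|^q\ \leq\ 2(\kappa_p+1)(2\varepsilon)^{p-1}\|x_2-x_1\|.
\]

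Here the argument departs from that of Theorem~\ref{th:dif:proxreg12}. Because $\|y_2-y_1\|\leq 2\varepsilon<1$ and $q\geq 2$, one has $\|y_2-y_1\|^q\leq\|y_2-y_1\|^2$, so the displayed inequality reduces to $(r_2-\rho\gamma)\|y_2-y_1\|^2\leq 2(\kappa_p+1)(2\varepsilon)^{p-1}\|x_2-x_1\|$. Factoring out $(2\varepsilon)^{p-2}$ from $r_2-\rho\gamma=(2\varepsilon)^{p-2}(\kappa_p-\rho\gamma(2\varepsilon)^{2-p})$ — which is positive by the hypothesis on $\gamma$ — yields $\|y_2-y_1\|^2\leq L_p^q\|x_2-x_1\|$. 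A \emph{second} appeal to $\|y_2-y_1\|^q\leq\|y_2-y_1\|^2$ then gives $\|y_2-y_1\|^q\leq L_p^q\|x_2-x_1\|$, i.e., \eqref{lochol:maineq:weaksm}. In particular, HOPE is single-valued and H\"older continuous on $U$, so Proposition~\ref{th:diffcharact} implies $\fgam{\gf}{p}{\gamma}\in\mathcal{C}^1(U)$ with $\nabla\fgam{\gf}{p}{\gamma}(x)=\tfrac{1}{\gamma}\|x-y\|^{p-2}(x-y)$.

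For \eqref{locholof gra:maineq:weaksm} I will invoke the classical H\"older estimate $\|\|a\|^{p-2}a-\|b\|^{p-2}b\|\leq 2^{2-p}\|a-b\|^{p-1}$ valid for $p\in(1,2]$ (a companion to Lemma~\ref{lem:findlowbounknu:lemma}~\ref{lem:findlowbounknu:lemma:e3}), applied with $a=x_2-y_2$ and $b=x_1-y_1$. The triangle inequality, the elementary bound $\|x_2-x_1\|\leq(2\varepsilon)^{(q-1)/q}\|x_2-x_1\|^{1/q}$ (since $\|x_2-x_1\|\leq 2\varepsilon$), and \eqref{lochol:maineq:weaksm} give $\|(x_2-y_2)-(x_1-y_1)\|\leq((2\varepsilon)^{(q-1)/q}+L_p)\|x_2-x_1\|^{1/q}$; raising to the $(p-1)$-th power and multiplying by $\tfrac{2^{2-p}}{\gamma}$ produces exactly the constant $\mathcal{L}_p$ in the statement. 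The delicate point is the \emph{double} use of $\|y_2-y_1\|^q\leq\|y_2-y_1\|^2$ in the middle step: once to neutralize the $-\rho\gamma\|y_2-y_1\|^q$ term so that the $r_2\|y_2-y_1\|^2$ contribution survives with a positive coefficient (which forces the sharper range of $\gamma$ stated in the theorem), and once more to pass from the latent $1/2$-H\"older estimate to the $1/q$-H\"older estimate matching the form of the theorem.
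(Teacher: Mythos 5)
Your proposal is correct and follows essentially the same route as the paper's proof: it reuses the estimate from the proof of Theorem~\ref{th:dif:proxreg12} to get the squared bound with constant $L_p^q$, downgrades it to a $1/q$-H\"{o}lder bound via $\Vert y_2-y_1\Vert^q\leq\Vert y_2-y_1\Vert^2$, and then derives \eqref{locholof gra:maineq:weaksm} from the $(p-1)$-H\"{o}lder continuity of $\nabla\left(\tfrac{1}{p\gamma}\Vert\cdot\Vert^p\right)$ together with the triangle inequality, exactly as in the paper (which cites Rodomanov--Nesterov for that gradient inequality). The only cosmetic difference is that you carry the exponent $q$ on the prox-regularity remainder a little longer before reducing it to $2$, whereas the paper performs that reduction at the outset.
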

\begin{proof}
By the assumptions, we get
\[2\varepsilon<\bs\min\left\{\left(\frac{\kappa_p}{\gamma\rho}\right)^{\frac{1}{2-p}},1\right\},\]
for $p\in (1, 2)$ and $\gamma\in \left(0, \bs\min\left\{\frac{1}{4M}, \frac{1}{\rho}\right\}\right)$ for $p=2$. Hence, from the proof of Theorem~\ref{th:dif:proxreg12}, there exists a neighbourhood $U\subseteq \mb(0; \varepsilon)$ such that for each $x_i\in U$, $i=1,2$, we have
\begin{equation*}
\Vert \prox{\gf}{\gamma}{p}(x_2) - \prox{\gf}{\gamma}{p}(x_1)\Vert^q\leq \Vert \prox{\gf}{\gamma}{p}(x_2) - \prox{\gf}{\gamma}{p}(x_1)\Vert^2\leq 
\left(\frac{2r_1r_2+2r_1^{p-1}}{r_2-\rho\gamma}\right) \Vert x_2-x_1\Vert, 
\end{equation*}
where $r_1:=2\varepsilon$ and $r_2:=\kappa_p r_1^{p-2}$. Thus, we obtain \eqref{lochol:maineq:weaksm}
with $L_p:=\left(\frac{4\varepsilon(\kappa_p+1)}{\kappa_p-\rho\gamma(2\varepsilon)^{2-p}}\right)^{\frac{1}{q}}$.

For $x_1, x_2 \in  U$, we get
  \begin{align*}
\left\Vert \nabla \fgam{\gf}{p}{\gamma}(x_2)-\nabla \fgam{\gf}{p}{\gamma}(x_1)\right\Vert  &=\left\Vert \nabla\left(\frac{1}{p\gamma}\Vert\cdot\Vert^{p}\right)\left(x_2 - \prox{\gf}{\gamma}{p}(x_2)\right)-\nabla\left(\frac{1}{p\gamma}\Vert\cdot\Vert^{p}\right)\left(x_1 - \prox{\gf}{\gamma}{p}(x_1)\right)\right\Vert\\
 &\leq \frac{2^{2-p}}{\gamma} \left\Vert (x_2-x_1) - (\prox{\gf}{\gamma}{p}(x_2)-\prox{\gf}{\gamma}{p}(x_1))\right\Vert^{p-1},
  \end{align*}
  where the last inequality comes from \cite[Theorem~6.3]{Rodomanov2020}. Together with \eqref{lochol:maineq:weaksm}, this ensures
  \begin{align*}
\left\Vert \nabla \fgam{\gf}{p}{\gamma}(x_2)-\nabla \fgam{\gf}{p}{\gamma}(x_1)\right\Vert  &\leq \frac{2^{2-p}}{\gamma}\left\Vert (x_2-x_1) - (\prox{\gf}{\gamma}{p}(x_2)-\prox{\gf}{\gamma}{p}(x_1))\right\Vert^{p-1}
 \\
 &\leq \frac{2^{2-p}}{\gamma}\left(\Vert x_2-x_1\Vert + \Vert \prox{\gf}{\gamma}{p}(x_2)-\prox{\gf}{\gamma}{p}(x_1)\Vert\right)^{p-1}\\
 &\leq  \frac{2^{2-p}}{\gamma}\left(\Vert x_2-x_1\Vert^{\frac{q-1}{q}}\Vert x_2-x_1\Vert^{\frac{1}{q}} +L_p  \Vert x_2-x_1\Vert^{\frac{1}{q}}\right)^{p-1}\\
&\leq \frac{2^{2-p}}{\gamma}\left((2\varepsilon)^{\frac{q-1}{q}}+L_p \right)^{p-1} \Vert x_2-x_1\Vert^{\frac{p-1}{q}}.
  \end{align*}
By setting $\mathcal{L}_p:= \frac{2^{2-p}}{\gamma}\left((2\varepsilon)^{\frac{q-1}{q}}+L_p \right)^{p-1}$, we have established  \eqref{locholof gra:maineq:weaksm}.
\end{proof}

%%%%%%%%%%%%%%%%%%%%%%%%%%%%%%%%%%%%%%%%%%%%%%%%%%%%%%%%%%%%%%%%%%
\begin{theorem}[Weak smoothness of HOME under $q$-prox-regularity for $2\leq p\leq q$]\label{th:dif:proxreg:weak}
Let $p\geq 2$ and $\gf: \R^n\to \Rinf$ be a proper lsc function. Suppose $\ov{x} = 0$ is a $p$-calm point of $\gf$ with constant $M > 0$ and $\gf(\ov{x}) = 0$.
Then, for each $\gamma\in  \left(0, \bs\min\left\{\frac{2^{1-p}}{Mp},\frac{1}{\rho 2^{2p-3}}\right\}\right)$, 
under the assumption that $\gf$ is $q$-prox-regular with $q\geq p$ at $\ov{x}=0$ for $\ov{\zeta}=0\in\partial \gf(\ov{x})$ with $\varepsilon<\frac{1}{2}$ and $\rho>0$,
there exists a neighborhood $U\subseteq \mb(\ov{x}; \varepsilon)$ such that,
for each $x_1, x_2\in U$,
\begin{align}\label{eq2g2:lochol:eq11:weaksmo}
\Vert \prox{\gf}{\gamma}{p}(x_2) -  \prox{\gf}{\gamma}{p}(x_1) \Vert\leq L_p \Vert  x_2-x_1\Vert^{\frac{1}{q}},\qquad \forall x_1, x_2\in U,
\end{align}
and $\fgam{\gf}{p}{\gamma}\in\mathcal{C}^{1, \frac{1}{q}}_{\mathcal{L}_p}(U)$, i.e.,
 \begin{align}\label{eq:pg2:locholof gra:maineq:weaksmo}
 \left\Vert \nabla \fgam{\gf}{p}{\gamma}(x_2)-\nabla \fgam{\gf}{p}{\gamma}(x_1)\right\Vert \leq \mathcal{L}_p\Vert x_2-x_1\Vert^{\frac{1}{q}},\qquad \forall x_1, x_2\in U,
  \end{align}
  where $L_p:=\left(
\frac{2\left(1+\frac{2^p}{\kappa_s}\right)\varepsilon^{p-1}}{\frac{1}{2^{2p-3}}-\gamma\rho}
\right)^{\frac{1}{q}}$, $\mathcal{L}_p:= \frac{2(2\varepsilon)^{p-2}}{\gamma \kappa_{s}}\left((2\varepsilon)^{\frac{q-1}{q}}+L_p \right)$, and $s=\frac{p}{p-1}$.
\end{theorem}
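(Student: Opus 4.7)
The plan is to parallel the proof of Theorem~\ref{th:dif:proxreg} while preserving the $q$-th power of $\Vert y_1-y_2\Vert$ coming from $q$-prox-regularity rather than collapsing it to a $p$-th power as was done there. Since the admissible range of $\gamma$ coincides with that of Theorem~\ref{th:dif:proxreg}, Lemma~\ref{lem:prox:pcalm} supplies a neighbourhood $U\subseteq\mb(\ov{x};\varepsilon)$ on which, for $x_i\in U$ and $y_i=\prox{\gf}{\gamma}{p}(x_i)$, the estimates $\Vert y_i\Vert<\varepsilon$ and $\frac{1}{\gamma}\Vert x_i-y_i\Vert^{p-1}<\varepsilon$ hold and $\frac{1}{\gamma}\Vert x_i-y_i\Vert^{p-2}(x_i-y_i)\in\partial\gf(y_i)$. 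Applying $q$-prox-regularity at $\ov{x}$ symmetrically to $(y_1,y_2)$ and summing yields
\[
\langle \Vert x_1-y_1\Vert^{p-2}(x_1-y_1)-\Vert x_2-y_2\Vert^{p-2}(x_2-y_2),\,y_1-y_2\rangle \ge -\gamma\rho\Vert y_1-y_2\Vert^q,
\]
which is exactly \eqref{eq1-3:th:dif:proxreg} \emph{without} the subsequent weakening to $\Vert y_1-y_2\Vert^p$.

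Next, I would reuse the upper estimate of the same inner product derived in the proof of Theorem~\ref{th:dif:proxreg} via Fact~\ref{lemma:ineq:inequality p}~\ref{lemma:ineq:inequality p:ineq3} and Lemma~\ref{lem:findlowbounknu:lemma}~\ref{lem:findlowbounknu:lemma:e3} on $\mb(0;2\varepsilon)$, namely
\[
\langle\,\cdot\,,\,y_1-y_2\rangle \le -\frac{1}{2^{p-2}}\Vert(x_2-y_2)-(x_1-y_1)\Vert^p + \frac{2^{p+1}\varepsilon^{p-1}}{\kappa_s}\Vert x_1-x_2\Vert,
\]
then applying Fact~\ref{lemma:ineq:inequality p}~\ref{lemma:ineq:inequality p:ineq1} with $\lambda=\tfrac{1}{2}$ to the decomposition $(x_2-y_2)-(x_1-y_1)=(y_1-y_2)+(x_2-x_1)$ to obtain the lower bound $2^{1-p}\Vert y_1-y_2\Vert^p-\Vert x_2-x_1\Vert^p$. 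Combining everything yields
\[
\frac{1}{2^{2p-3}}\Vert y_1-y_2\Vert^p - \gamma\rho\Vert y_1-y_2\Vert^q \le \frac{1}{2^{p-2}}\Vert x_2-x_1\Vert^p + \frac{2^{p+1}\varepsilon^{p-1}}{\kappa_s}\Vert x_1-x_2\Vert .
\]
The decisive step, departing from Theorem~\ref{th:dif:proxreg}, is then to use $\Vert y_1-y_2\Vert\le 2\varepsilon<1$ and $q\ge p$ so that $\Vert y_1-y_2\Vert^p\ge\Vert y_1-y_2\Vert^q$; replacing the $p$-power on the left by the $q$-power and bounding $\Vert x_2-x_1\Vert^p\le(2\varepsilon)^{p-1}\Vert x_2-x_1\Vert$ produces
\[
\Bigl(\frac{1}{2^{2p-3}}-\gamma\rho\Bigr)\Vert y_1-y_2\Vert^q \le 2\varepsilon^{p-1}\Bigl(1+\frac{2^p}{\kappa_s}\Bigr)\Vert x_2-x_1\Vert,
\]
and the hypothesis $\gamma<1/(\rho\,2^{2p-3})$ makes the bracket on the left strictly positive, so that extracting the $q$-th root delivers \eqref{eq2g2:lochol:eq11:weaksmo} with precisely the stated $L_p$.

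Finally, for the gradient bound \eqref{eq:pg2:locholof gra:maineq:weaksmo} I would plug the explicit formula $\nabla\fgam{\gf}{p}{\gamma}(x)=\frac{1}{\gamma}\Vert x-\prox{\gf}{\gamma}{p}(x)\Vert^{p-2}(x-\prox{\gf}{\gamma}{p}(x))$ from Proposition~\ref{th:diffcharact} into Lemma~\ref{lem:findlowbounknu:lemma}~\ref{lem:findlowbounknu:lemma:e3} on $\mb(0;2\varepsilon)$, getting
\[
\Vert \nabla\fgam{\gf}{p}{\gamma}(x_2) - \nabla\fgam{\gf}{p}{\gamma}(x_1) \Vert \le \frac{2(2\varepsilon)^{p-2}}{\gamma\kappa_s}\Vert(x_2-y_2)-(x_1-y_1)\Vert,
\]
and then apply the triangle inequality together with the H\"older estimate \eqref{eq2g2:lochol:eq11:weaksmo} and $\Vert x_2-x_1\Vert\le(2\varepsilon)^{(q-1)/q}\Vert x_2-x_1\Vert^{1/q}$ to collapse the right-hand side into $\mathcal{L}_p\Vert x_2-x_1\Vert^{1/q}$ with exactly the published constant. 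The main obstacle here is not analytical but careful bookkeeping: one has to verify that the weakening $\Vert y_1-y_2\Vert^p\ge\Vert y_1-y_2\Vert^q$ is legitimate (ensured by $\varepsilon<\tfrac{1}{2}$), that the positivity of $\tfrac{1}{2^{2p-3}}-\gamma\rho$ is built into the admissible range of $\gamma$, and that the resulting constants assemble into exactly the closed-form $L_p$ and $\mathcal{L}_p$ appearing in the statement.
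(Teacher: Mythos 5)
Your proposal is correct and follows essentially the same route as the paper: the paper simply reuses the final estimate $\Vert y_2-y_1\Vert^p\leq \widehat{L}_p\Vert x_2-x_1\Vert$ from the proof of Theorem~\ref{th:dif:proxreg} and weakens it via $\Vert y_2-y_1\Vert^q\leq\Vert y_2-y_1\Vert^p$ (valid since $2\varepsilon<1$ and $q\geq p$), whereas you apply that same weakening one step earlier inside the combined inequality — the two orderings are equivalent and yield the identical constant $L_p=\widehat{L}_p^{1/q}$. The gradient estimate via Lemma~\ref{lem:findlowbounknu:lemma}~\ref{lem:findlowbounknu:lemma:e3}, the triangle inequality, and the bound $\Vert x_2-x_1\Vert\leq(2\varepsilon)^{(q-1)/q}\Vert x_2-x_1\Vert^{1/q}$ matches the paper's argument exactly.
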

\begin{proof}
From the proof of Theorem~\ref{th:dif:proxreg},
there exists a neighborhood $U\subseteq \mb(0; \varepsilon)$ such that if $x_i\in U$, $i=1,2$, then
\[
\Vert\prox{\gf}{\gamma}{p}(x_2) -  \prox{\gf}{\gamma}{p}(x_1) \Vert^q\leq \Vert\prox{\gf}{\gamma}{p}(x_2) -  \prox{\gf}{\gamma}{p}(x_1) \Vert^p\leq \widehat{L}_p \Vert  x_2-x_1\Vert,
\]
where $\widehat{L}_p$ is given in \eqref{eq1g2:lochol:eq11:b}.
Hence, \eqref{eq2g2:lochol:eq11:weaksmo} satisfies with
$
L_p:=\left(
\frac{2\left(1+\frac{2^p}{\kappa_s}\right)\varepsilon^{p-1}}{\frac{1}{2^{2p-3}}-\gamma\rho}
\right)^{\frac{1}{q}}.
$

Assume that $x_1, x_2 \in  U$. From Lemma~\ref{lem:findlowbounknu:lemma}~$\ref{lem:findlowbounknu:lemma:e3}$, by setting $s=\frac{p}{p-1}$, we obtain
  \begin{align*}
 \left\Vert \nabla \fgam{\gf}{p}{\gamma}(x_2)-\nabla \fgam{\gf}{p}{\gamma}(x_1)\right\Vert  &=\left\Vert \nabla\left(\frac{1}{p\gamma}\Vert\cdot\Vert^{p}\right)\left(x_2 - \prox{\gf}{\gamma}{p}(x_2)\right)-\nabla\left(\frac{1}{p\gamma}\Vert\cdot\Vert^{p}\right)\left(x_1 - \prox{\gf}{\gamma}{p}(x_1)\right)\right\Vert\\
 &\leq \frac{2(2\varepsilon)^{p-2}}{\gamma \kappa_{s}} \left\Vert (x_2-x_1) - (\prox{\gf}{\gamma}{p}(x_2)-\prox{\gf}{\gamma}{p}(x_1))\right\Vert.
  \end{align*}
  Together with \eqref{eq2g2:lochol:eq11:weaksmo}, for each $x_1, x_2 \in  U$, this ensures
  \begin{align*}
\left\Vert \nabla \fgam{\gf}{p}{\gamma}(x_2)-\nabla \fgam{\gf}{p}{\gamma}(x_1)\right\Vert  &\leq \frac{2(2\varepsilon)^{p-2}}{\gamma \kappa_s}\left\Vert (x_2-x_1) - (\prox{\gf}{\gamma}{p}(x_2)-\prox{\gf}{\gamma}{p}(x_1))\right\Vert
 \\
 &\leq \frac{2(2\varepsilon)^{p-2}}{\gamma \kappa_{s}}\left(\Vert x_2-x_1\Vert + \Vert \prox{\gf}{\gamma}{p}(x_2)-\prox{\gf}{\gamma}{p}(x_1)\Vert\right)\\
 &\leq  \frac{2(2\varepsilon)^{p-2}}{\gamma \kappa_{s}}\left(\Vert x_2-x_1\Vert^{\frac{q-1}{q}}\Vert x_2-x_1\Vert^{\frac{1}{q}} +L_p  \Vert x_2-x_1\Vert^{\frac{1}{q}}\right)\\
&\leq \frac{2(2\varepsilon)^{p-2}}{\gamma \kappa_{s}}\left((2\varepsilon)^{\frac{q-1}{q}}+L_p \right) \Vert x_2-x_1\Vert^{\frac{1}{q}}.
  \end{align*}
  With $\mathcal{L}_p:= \frac{2(2\varepsilon)^{p-2}}{\gamma \kappa_{s}}\left((2\varepsilon)^{\frac{q-1}{q}}+L_p \right)$, \eqref{eq:pg2:locholof gra:maineq:weaksmo} holds.
\end{proof}
%%%%%%%%%%%%%%%%%%%%%%%%%%%%%%%%%%%%%%%%%%%%%%%%%%%%%%%%%%%%%%%%%%%%%%%%%%%%

%%%%%%%%%%%%%%%%%%%%%%%%%%%%%%%%%%%%%%%%%%%%%%%%%%%%%%%%%%%%%%%%%%%%%%%%%%%%
%%%%%%%%%%%%%%%%%%%%%%%%%%%%%%%%%%%%%%%%%%%%%%%%%%%%%%%%%%%%%%%%%%%%%%%%%%%%%%%%%%%
\section{{\bf High-order proximal-point method for nonsmooth optimization}} \label{sec:applications}
This section deals with optimization problems of the form
\begin{equation}\label{eq:mainproblem}
\mathop{\bs\min}\limits_{x\in \mathbb {R}^n}\  \gf(x),
\end{equation}
where $\gf: \R^n\to \Rinf$ is a proper lsc function that is neither necessarily smooth nor convex.
We assume that the set of minimizers of $\gf$ is nonempty and for $x^*\in \argmin{x\in \R^n} \gf(x)$, denote the corresponding minimal value by $\gf^*$. As described in Section~\ref{intro}, the classical methods to solve general problems of the form \eqref{eq:mainproblem} are subgradient-based methods or proximal-point methods; see, e.g., \cite{bagirov2014introduction,beck2017first, Nesterov2018}; however, our method of interest in this study is proximal point methods. In the classical setting, the power in the regularization term is $p=2$, which may not be the best choice considering the geometry of the underlying function in \eqref{eq:mainproblem}. As such, we aim at developing and analyzing a high-order proximal-point algorithm (HiPPA) for solving \eqref{eq:mainproblem}, demonstrating its convergence properties and practical efficacy for nonsmooth and nonconvex optimization problems.
 
 To tackle \eqref{eq:mainproblem}, we propose the \textit{High-order Proximal-Point Algorithm} (HiPPA), defined by the iterative scheme
\begin{equation}\label{eq:HiPPA}
x^{k+1}\in \prox{\gf}{\gamma}{p}(x^k),
\end{equation}
with parameter $\gamma>0$ and order $p>1$. 
This algorithm generalizes the classical proximal-point method by incorporating a high-order regularization term, offering better adaptability to the geometry of the underlying nonsmooth and nonconvex objective functions. We establish that HiPPA converges subsequentially to a proximal fixed point, which, under appropriate conditions on $\gamma$, is also a $p$-calm point and a critical point of $\gf$. 
In the following, we present the convergence analysis, followed by practical insights derived from our preliminary numerical experiments on the Nesterov-Chebyshev–Rosenbrock functions; see Figure~\ref{fig:3d_functions}.

%%%%%%%%%%%%%%%%%%%%%%%%%%%%%%%%%%%%%%%%%%%%%%%%%%%%%%%%%%%%%%%%%%%%%%%%%%%%%
%\subsection{{\bf Convergence analysis}}
Here, we verify the convergence of the sequence $\{x^k\}_{k\in \Nz}$ generated by HiPPA. Let us begin by showing some key properties of this sequence and analyzing its subsequential convergence.
%%%%%%%%%%%%%%%%%%%%%%%%%%%%%%% 
\begin{theorem}[Subsequential convergence  of HiPPA]\label{th:HiPPA}
Let $\gf: \R^n\to \Rinf$ be a proper lsc function such that the set of minimizers of $\gf$ is nonempty.
Let $\gamma>0$, and let $\{x^k\}_{k\in \Nz}$ be a sequence generated by \eqref{eq:HiPPA}. Then the following hold:
\begin{enumerate}[label=(\textbf{\alph*}), font=\normalfont\bfseries, leftmargin=0.7cm]
  \item \label{prop:propertiesppa:noninc} the sequences $\{\gf(x^k)\}_{k \in \Nz}$ and $\{\fgam{\gf}{p}{\gamma}(x^k)\}_{k \in \Nz}$ are non-increasing;
  \item \label{prop:propertiesppa:Cauchy} $\sum_{k=0}^{\infty}\Vert x^{k+1} - x^k\Vert^p<\infty$ and $\bs\lim_{k\to \infty}(x^k - {x}^{k+1})= 0$;
\item \label{th:comcon:cluster} each cluster point  of $\{x^k\}_{k\in \Nz}$  is a proximal fixed point.
\end{enumerate}
\end{theorem}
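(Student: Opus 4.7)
The plan is to exploit the defining optimality of $x^{k+1}\in\prox{\gf}{\gamma}{p}(x^k)$ together with the lower-boundedness $\gf\geq\gf^\ast>-\infty$ (which follows from the nonempty minimizer set and, via Proposition~\ref{lemma:charac:sprox}, guarantees high-order prox-boundedness with threshold $+\infty$, so \eqref{eq:HiPPA} is well-posed for every $\gamma>0$ by Fact~\ref{th:level-bound+locally uniform}~\ref{level-bound+locally uniform:proxnonemp}). For part~\ref{prop:propertiesppa:noninc}, the key identity is the sufficient-decrease inequality
\begin{equation*}
\gf(x^{k+1})+\tfrac{1}{p\gamma}\Vert x^{k+1}-x^k\Vert^p \;\leq\; \gf(x^k),
\end{equation*}
obtained by comparing the minimizing value at $y=x^{k+1}$ with the competitor $y=x^k$ inside $\prox{\gf}{\gamma}{p}(x^k)$. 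The monotonicity of $\{\fgam{\gf}{p}{\gamma}(x^k)\}$ then follows by chaining
\[
\fgam{\gf}{p}{\gamma}(x^{k+1})\leq \gf(x^{k+1})\leq \gf(x^{k+1})+\tfrac{1}{p\gamma}\Vert x^{k+1}-x^k\Vert^p=\fgam{\gf}{p}{\gamma}(x^k),
\]
where the last equality uses $x^{k+1}\in\prox{\gf}{\gamma}{p}(x^k)$.

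For part~\ref{prop:propertiesppa:Cauchy}, I would telescope the inequality from \ref{prop:propertiesppa:noninc}: summing
$\tfrac{1}{p\gamma}\Vert x^{k+1}-x^k\Vert^p\leq \gf(x^k)-\gf(x^{k+1})$ over $k=0,\ldots,N$ gives
\[
\tfrac{1}{p\gamma}\sum_{k=0}^{N}\Vert x^{k+1}-x^k\Vert^p \;\leq\; \gf(x^0)-\gf(x^{N+1})\;\leq\;\gf(x^0)-\gf^\ast,
\]
and letting $N\to\infty$ yields $\sum_{k=0}^\infty\Vert x^{k+1}-x^k\Vert^p<\infty$. In particular $\Vert x^{k+1}-x^k\Vert\to 0$, i.e., $x^k-x^{k+1}\to 0$.

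For part~\ref{th:comcon:cluster}, let $\ov{x}$ be a cluster point, so $x^{k_j}\to\ov{x}$ along some subsequence. By \ref{prop:propertiesppa:Cauchy} we also have $x^{k_j+1}\to\ov{x}$. Since $x^{k_j+1}\in\prox{\gf}{\gamma_j}{p}(x^{k_j})$ with the constant choice $\gamma_j\equiv\gamma\in(0,\gamma^{\gf,p})=(0,+\infty)$, Fact~\ref{th:level-bound+locally uniform}~\ref{level-bound+locally uniform2:conv} ensures that every cluster point of $\{x^{k_j+1}\}$ lies in $\prox{\gf}{\gamma}{p}(\ov{x})$; applied to the cluster point $\ov{x}$ itself this gives $\ov{x}\in\prox{\gf}{\gamma}{p}(\ov{x})$, i.e., $\ov{x}\in\bs{\rm Fix}(\prox{\gf}{\gamma}{p})$.

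None of the three steps presents a serious obstacle; the only subtle point is making sure the outer-semicontinuity of HOPE (Fact~\ref{th:level-bound+locally uniform}~\ref{level-bound+locally uniform2:conv}) is applicable, which is why I would explicitly argue high-order prox-boundedness with threshold $+\infty$ from the hypothesis that $\argmin\gf\neq\emptyset$ (equivalently $\gf^\ast>-\infty$) via Proposition~\ref{lemma:charac:sprox}~\ref{lemma:charac:sprox:c}.
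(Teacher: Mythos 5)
Your proposal is correct and follows essentially the same route as the paper: the sufficient-decrease inequality from comparing $y=x^{k+1}$ with $y=x^k$, telescoping against the lower bound $\gf^\ast$, and outer semicontinuity of HOPE (Fact~\ref{th:level-bound+locally uniform}~\ref{level-bound+locally uniform2:conv}) for the cluster-point claim. Your explicit verification of well-posedness via high-order prox-boundedness with threshold $+\infty$ is a point the paper only makes in the remark following the theorem, but it does not change the argument.
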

\begin{proof}
\ref{prop:propertiesppa:noninc} Since $x^{k+1} \in \prox{\gf}{\gamma}{p}(x^k)$, by Fact \ref{fact:horder:Bauschke17:p12.9}, we have:
\begin{equation}\label{eq1:prop:propertiesppa}
 \gf(x^{k+1})\leq  \gf(x^{k+1})+\frac{1}{p\gamma}\Vert x^{k+1}-x^k\Vert^p=\fgam{\gf}{p}{\gamma}(x^k)\leq \gf(x^k),
\end{equation}
and
\begin{equation}\label{eq2:prop:propertiesppa}
 \fgam{\gf}{p}{\gamma}(x^{k+1})\leq \gf(x^{k+1})=\fgam{\gf}{p}{\gamma}(x^k)-\frac{1}{p\gamma}\Vert x^{k+1}-x^k\Vert^p\leq \fgam{\gf}{p}{\gamma}(x^k),
\end{equation}
These show that both $\{\gf(x^k)\}_{k \in \Nz}$ and $\{\fgam{\gf}{p}{\gamma}(x^k)\}_{k \in \Nz}$ are nonincreasing.
\\
\ref{prop:propertiesppa:Cauchy}
 With reusing \eqref{eq1:prop:propertiesppa}, we have
\[
\sum_{k=0}^{\infty}\Vert x^{k+1} - x^k\Vert^p\leq p\gamma(\gf(x^0)-\gf^*)<\infty.
\]
Consequently, $\bs\lim_{k\to \infty}(x^k - {x}^{k+1})= 0$.
\\
$\ref{th:comcon:cluster}$ 
Let $\widehat{x}$ be a cluster point of $\{x^k\}_{k\in \Nz}$, and let $\{x^j\}_{j\in J\subseteq \Nz}$ be a subsequence converging to $\widehat{x}$.
Since $x^{j+1}\in \prox{\gf}{\gamma}{p}(x^{j})$,
Assertion~$\ref{prop:propertiesppa:Cauchy}$ yields $\Vert x^{j} - x^{j+1}\Vert\rightarrow 0$ as $j\to \infty$, $j\in J$.
Because $\Vert x^j - \widehat{x}\Vert\to 0$
and $\Vert x^{j+1}-\widehat{x}\Vert\leq \Vert x^{j} - x^{j+1}\Vert +\Vert x^j - \widehat{x}\Vert$, it follows that
$x^{j+1}\to \widehat{x}$ as $j\to \infty$, $j\in J$.
By Fact~\ref{th:level-bound+locally uniform}~$\ref{level-bound+locally uniform2:conv}$, we conclude that $\widehat{x}\in \prox{\gf}{\gamma}{p}(\widehat{x})$, which confirms that $\widehat{x}$ is a proximal fixed point.
\end{proof}
%%%%%%%%%%%%%%%%%%%%%%%%%%%%%%%%%

\begin{remark}
In Theorem~\ref{th:HiPPA}, the assumptions ensure that $\gh$ is bounded from below and thus high-order prox-bounded with threshold $\gamma^{\gf,p}=+\infty$. Consequently, by Fact~\ref{th:level-bound+locally uniform}, the proximal operator $\prox{\gf}{\gamma}{p}(x^k)$ is nonempty at every iteration. Moreover, the convergence results in Theorem~\ref{th:HiPPA} remain valid regardless of which element of the proximal mapping is chosen when multiple solutions exist.
\end{remark}

%%%%%%%%%%
In Theorem~\ref{th:HiPPA}, no restriction is imposed on $\gamma$. Thus, for any $\gamma>0$, the sequence $\{x^k\}_{k\in \Nz}$ generated by \eqref{eq:HiPPA} subsequentially converges to a proximal fixed point. By Theorem~\ref{lem:progpcalm}~\ref{lem:progpcalm:critic3}, these fixed points are $p$-calm points, and according to Fact~\ref{prop:relcrit}, they are Mordukhovich critical points.
 Theorems~\ref{th:dif:proxreg12}~and~\ref{th:dif:proxreg} further guarantee local differentiability of HOME around such points, enhancing the theoretical foundation of the algorithm explained in the following remark.

 %%%%%%%%%%%%%%%%%%%%%%%%%%%%%%%%%%%%%%%%%%%%%
\begin{remark}
 Theorems~\ref{th:dif:proxreg12}~and~\ref{th:dif:proxreg} show that, selecting an appropriate $\gamma>0$ and $p>1$, for each $p$-calm point $\ov{x}$ of $\gf$, there exists a neighborhood $U$ of $\ov{x}$ where $\fgam{\gf}{p}{\gamma}\in \mathcal{C}^{1}(U)$. This enables gradient-based iterative algorithms to find critical points of $\fgam{\gf}{p}{\gamma}$, which are also critical points of $\gf$ (see Fact~\ref{prop:relcrit}).
In \cite{Kabganitechadaptive}, a similar strategy is applied to weakly convex functions, extending the region of differentiability with suitable $\gamma$ and $p\in (1, 2]$.
For HiPPA, we allow $p>1$. Under the assumptions of Proposition~\ref{th:diffcharact}, if $\fgam{\gf}{p}{\gamma}$ is continuously differentiable at a point $x\in \R^n$,
then $\prox{\gf}{\gamma}{p}(x)$ is single-valued, and for $y= \prox{\gf}{\gamma}{p}(x)$, we have
\[\nabla\fgam{\gf}{p}{\gamma}(x) = \frac{1}{\gamma} \Vert x - y \Vert^{p-2} (x - y).\]
Simple calculation yields
\begin{equation}\label{eq:proxgrad}
\prox{\gf}{\gamma}{p}(x) = x  - \gamma^{\frac{1}{p-1}}\left\Vert \nabla\fgam{\gf}{p}{\gamma}(x)\right\Vert^{\frac{2-p}{p-1}}\nabla\fgam{\gf}{p}{\gamma}(x),
\end{equation}
generalizing \eqref{eq:relmorprox}, meaning that HiPPA acts as a gradient method near a $p$-calm point.
 \end{remark}
 
Next, we analyze the finite termination of HiPPA with a practical stopping criterion and quantify the quality of the returned solution.

\begin{corollary}\label{th:com}
Let $\{x^k\}_{k\in \Nz}$ be generated by \eqref{eq:HiPPA}. If  the algorithm terminates when
 $\Vert x^k - {x}^{k+1}\Vert\leq \epsilon$ for a given tolerance $\epsilon>0$, then
\begin{enumerate}[label=(\textbf{\alph*}), font=\normalfont\bfseries, leftmargin=0.7cm]
\item \label{rem:comcon:compx}
the algorithm terminates within 
$k\leq \frac{p\gamma(\gf(x^0)-\gf^*)}{\epsilon^p}$ iterations;

\item \label{th:comcon:dist} 
For the returned point ${x}^{k+1}$,  we have 
\[
\dist(0, \partial \gf({x}^{k+1}))\leq \frac{\epsilon^{p-1}}{\gamma}.
\]
\end{enumerate}
\end{corollary}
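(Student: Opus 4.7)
The plan is to derive both assertions as quick consequences of Theorem~\ref{th:HiPPA}, together with the first-order optimality condition satisfied by any element of \eqref{eq:Hiorder-Moreau prox}. Since $\argmin{x\in\R^n}\gf(x)$ is nonempty by assumption, the function $\gf$ is bounded below by $\gf^*$, hence high-order prox-bounded with threshold $+\infty$, so \eqref{eq:HiPPA} is well-defined at every step and Theorem~\ref{th:HiPPA}~\ref{prop:propertiesppa:Cauchy} applies.

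For part~\ref{rem:comcon:compx}, I would exploit the telescoping bound used in the proof of Theorem~\ref{th:HiPPA}~\ref{prop:propertiesppa:Cauchy}, namely
\[
\sum_{j=0}^{k-1}\Vert x^{j+1}-x^j\Vert^p \le p\gamma\bigl(\gf(x^0)-\gf^*\bigr).
\]
If the stopping criterion $\Vert x^{j+1}-x^j\Vert\le\varepsilon$ has not been triggered by iteration $k$, then each summand strictly exceeds $\varepsilon^p$, so $k\varepsilon^p < p\gamma(\gf(x^0)-\gf^*)$, which rearranges to the claimed bound. This step is purely arithmetic.

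For part~\ref{th:comcon:dist}, the key observation is that $x^{k+1}\in\prox{\gf}{\gamma}{p}(x^k)$ is, by definition, a global minimizer of the proximal subproblem $y\mapsto \gf(y)+\tfrac{1}{p\gamma}\Vert y-x^k\Vert^p$. Applying generalized Fermat's rule and the subdifferential sum rule (valid because the regularizer $y\mapsto \tfrac{1}{p\gamma}\Vert y-x^k\Vert^p$ is smooth for $p>1$, using the convention $\tfrac{0}{0}=0$ adopted in Section~\ref{sec:preliminaries}) yields
\[
\frac{1}{\gamma}\Vert x^k-x^{k+1}\Vert^{p-2}(x^k-x^{k+1})\in \widehat{\partial}\gf(x^{k+1})\subseteq \partial\gf(x^{k+1}).
\]
Taking norms of this particular subgradient gives $\dist(0,\partial\gf(x^{k+1}))\le \tfrac{1}{\gamma}\Vert x^k-x^{k+1}\Vert^{p-1}$, and substituting the stopping criterion $\Vert x^k-x^{k+1}\Vert\le \varepsilon$ produces the stated estimate $\tfrac{\varepsilon^{p-1}}{\gamma}$.

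I do not anticipate a genuine obstacle: both parts are one-line consequences of results already established. The only mildly delicate point is invoking the smooth-plus-nonsmooth sum rule cleanly, which is standard but deserves a brief citation to \cite{Rockafellar09} to ensure the argument is self-contained; aside from that, everything reduces to arithmetic and the definition of HOPE.
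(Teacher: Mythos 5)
Your proposal is correct and follows essentially the same route as the paper: part~\ref{rem:comcon:compx} via the telescoping bound $\sum_{j}\Vert x^{j+1}-x^j\Vert^p\leq p\gamma(\gf(x^0)-\gf^*)$ from the proof of Theorem~\ref{th:HiPPA}, and part~\ref{th:comcon:dist} via the optimality condition $\frac{1}{\gamma}\Vert x^k-x^{k+1}\Vert^{p-2}(x^k-x^{k+1})\in\partial\gf(x^{k+1})$ followed by taking norms. The only cosmetic difference is that the paper bounds the sum from below by $K$ times the minimal summand rather than by noting each summand exceeds $\epsilon^p$, and it asserts the subgradient inclusion without spelling out Fermat's rule and the sum rule as you do.
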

\begin{proof}
\ref{rem:comcon:compx}
From \eqref{eq2:prop:propertiesppa}, we obtain
\[
\frac{1}{p\gamma} K\bs\min_{0\leq k\leq K-1}\Vert x^k -{x}^{k+1}\Vert^{p}\leq \fgam{\gf}{p}{\gamma}(x^0)-\gf^*\leq \gf(x^0)-\gf^*.
\]
Thus, we require, 
$\frac{p\gamma(\gf(x^0)-\gf^*)}{K}\leq \epsilon^p$.
Hence, the algorithm terminate within some
$k\leq \frac{p\gamma(\gf(x^0)-\gf^*)}{\epsilon^p}$.
\\
\ref{th:comcon:dist}
For ${x}^{k+1}$, it holds that
$\frac{1}{\gamma}\Vert x^k - {x}^{k+1}\Vert^{p-2}(x^k - {x}^{k+1})\in \partial \gf({x}^{k+1})$, i.e., 
\[
\dist(0, \partial \gf({x}^{k+1}))\leq \frac{1}{\gamma}\Vert x^k - {x}^{k+1}\Vert^{p-1}\leq \frac{\epsilon^{p-1}}{\gamma},
\]
adjusting our claims.
\end{proof}

Note that the upper bound for $\dist(0, \partial \gf({x}^{k+1}))$ obtained in Theorem~\ref{th:com}~\ref{th:comcon:dist}
provides an estimate of how close ${x}^{k+1}$ is to a Mordukhovich critical point of $\gf$.
 
\begin{figure}
    \centering
    % First row
    \begin{subfigure}{0.45\textwidth}
        \centering
        \includegraphics[width=1.2\textwidth]{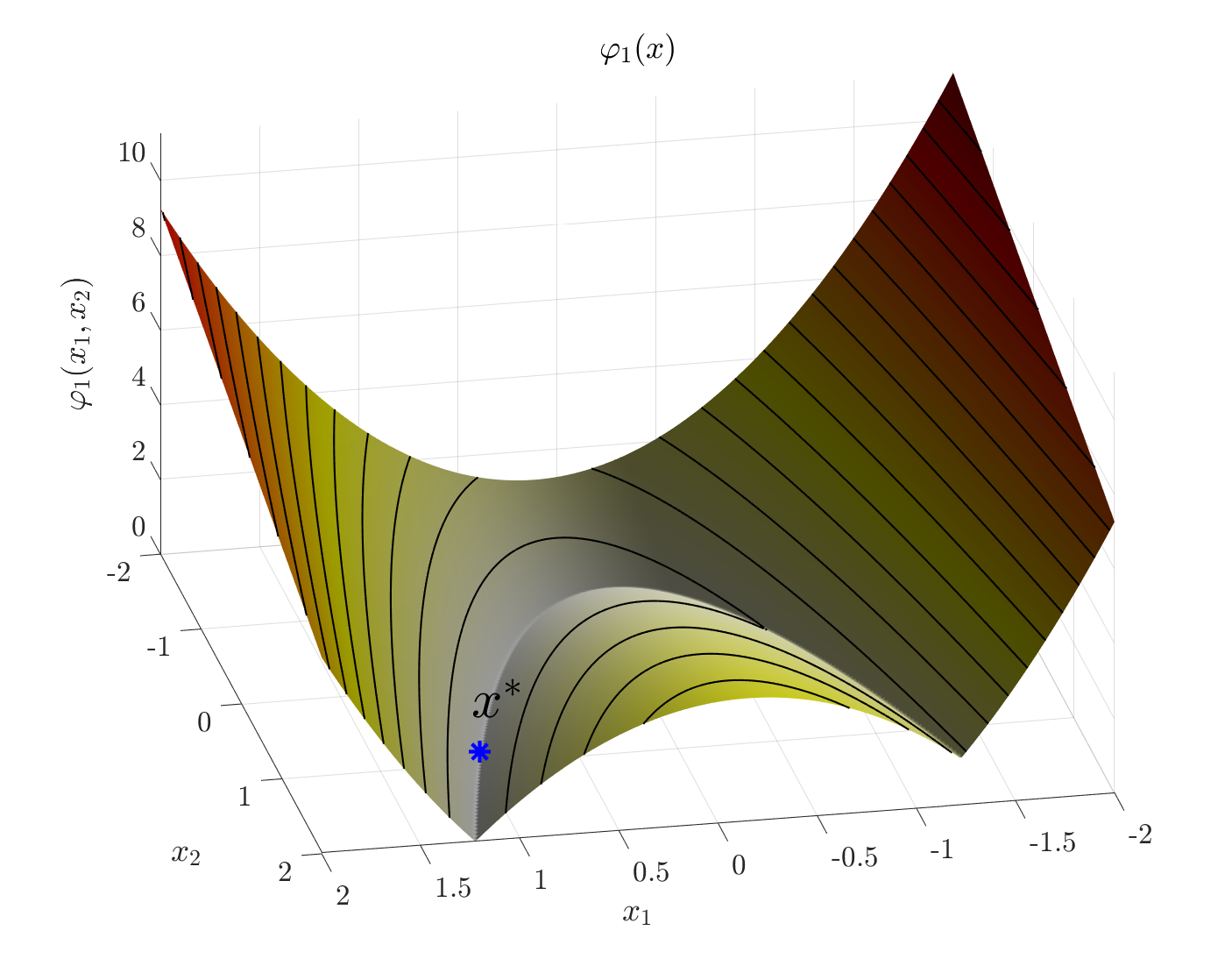}
        \caption{The shape of $\gf_1$ (Example~\ref{ex:nes1})}
    \end{subfigure}
    \qquad~
    \begin{subfigure}{0.45\textwidth}
        \centering
        \includegraphics[width=1.2\textwidth]{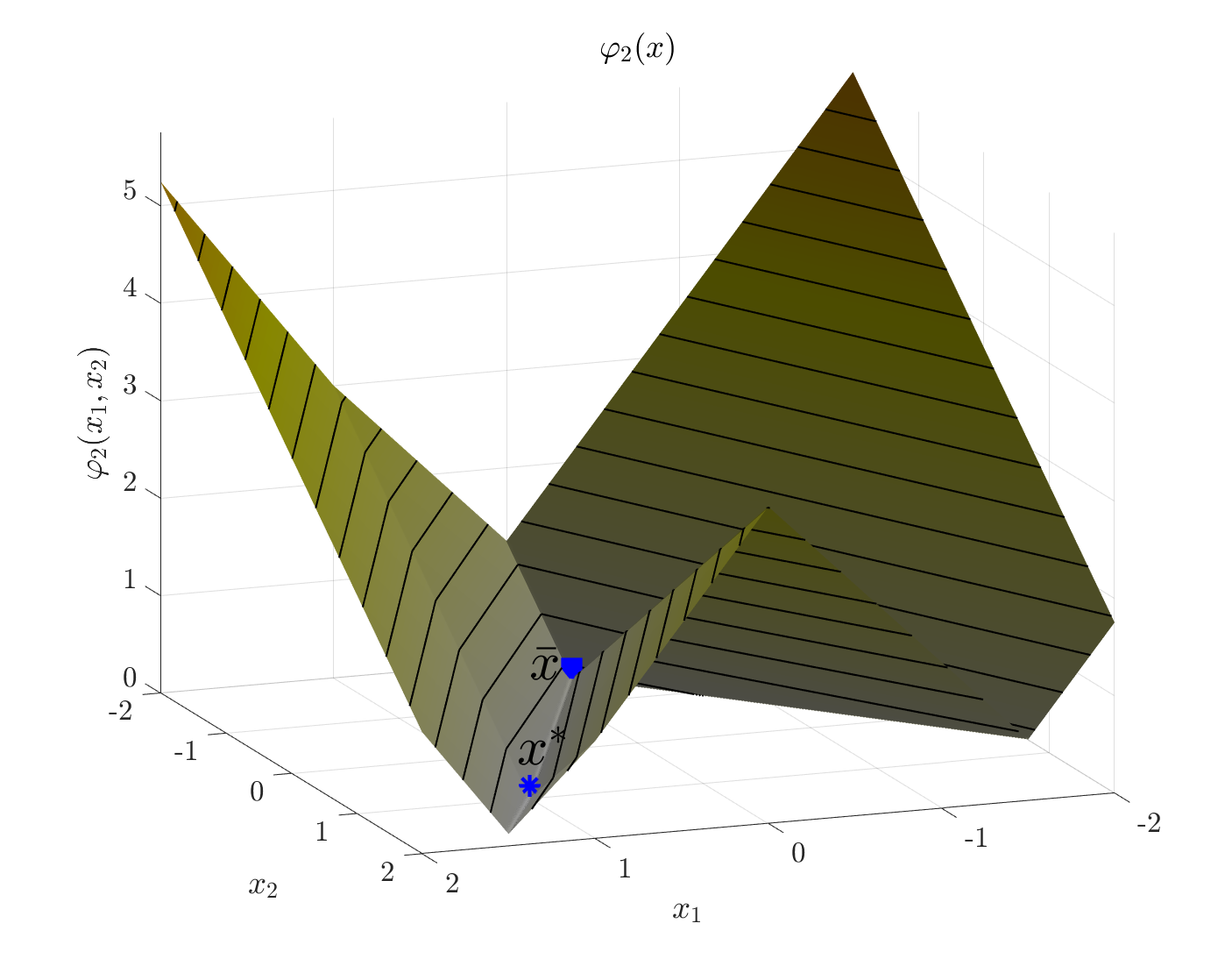}
        \caption{The shape of $\gf_2$ (Example~\ref{ex:nes2})}
    \end{subfigure}
    \vspace{1em}   

    \caption{The shapes of function $\gf_1$ (Example~\ref{ex:nes1}) and $\gf_2$ (Example~\ref{ex:nes2}).}
    \label{fig:3d_functions}
\end{figure}

%%%%%%%%%%%%%%%%%%%%%%%%%%%%%%%%%%%%%%%%%%%%
\subsection{{\bf Minimizing Nesterov-Chebyshev–Rosenbrock functions}}
To demonstrate the promising potential of HiPPA, we study its performance for minimizing  Nesterov-Chebyshev-Rosenbrock's functions, which are nonsmooth and nonconvex given in \cite{GURBUZBALABAN2012Nest}. These examples highlight HiPPA’s ability to locate global minimizers from challenging starting points, outperforming traditional subgradient methods in certain cases. The shape of the function $\gf_1$, introduced in Example~\ref{ex:nes1}, together with its unique minimizer $x^*$, which is also its unique critical point, is shown in Subfigure (a) of Figure~\ref{fig:3d_functions}. Similarly, the shape of the function $\gf_2$, introduced in Example~\ref{ex:nes2}, along with its unique minimizer $x^*$ and another critical point $\ov{x}$, is illustrated in Subfigure (b) of Figure~\ref{fig:3d_functions}.

As HiPPA \eqref{eq:HiPPA} specifies, at the $(k+1)$-th iteration we need to compute an element of
$\prox{\gf}{\gamma}{p}(x^k)$. In the examples below, we approximate this element using the Nelder–Mead simplex algorithm \cite{Audet2017Dfree,Lagarias1998Convergence}, since the problem dimensions are low. We compare our method against the following approaches:

\begin{itemize}
\item \textbf{SG-DSS}: Subgradient method with diminishing step size rule $\frac{\alpha}{\sqrt{k}}$, where $\alpha>0$ and $k$ denotes the iteration \cite{bagirov2014introduction,Davis2018,Rahimi2024};
\item \textbf{NM}: Nelder–Mead simplex algorithm \cite{Audet2017Dfree,Lagarias1998Convergence};
\item \textbf{PBM}: Proximal bundle method with line search \cite{Makla2016Proximal}\footnote{The Fortran implementation is publicly available at \url{https://napsu.karmitsa.fi/proxbundle/}.
}.
\end{itemize}
Let us note that we computed an element of the limiting subdifferential as the subgradient.
All algorithms are terminated after $0.1$ seconds of CPU-time and executed on a laptop equipped with a 12th Gen Intel$\circledR$ Core$^{\text{TM}}$ i7-12800H CPU (1.80 GHz) and 16 GB of RAM, running MATLAB R2025a.

\begin{example}[Nesterov-Chebyshev–Rosenbrock function I]\label{ex:nes1}
Let $\gf_1: \R^2\to \R$ be a function given by
\[
\gf_1(x)= \frac{1}{4}\left(x_1-1\right)^2+\left\vert x_2-2x_1^2 +1\right\vert,
\]
where the unique global minimizer is $x^*=(1,1)$ and $\bs{\rm Mcrit}(\gf_1)=\{x^*\}$.
A challenging starting point for iterative schemes to find the global minimizer of $\gf_1$ is $x^0=(-1,1)$ as reported in \cite{GURBUZBALABAN2012Nest}. Specifically, since $\gf_1$ is not differentiable at $x^0$, optimization algorithms such as the BFGS method, which are designed for smooth optimization problems, cannot be initialized from this point.

We first compare HiPPA with different values of $p\in \{1.5, 2, 2.5, 3, 10, 100\}$ and $\gamma=50$, obtained by tuning, starting from $x^0=(-1,1)$. Subfigure~\ref{fig:hippa:N1}(a) shows the relative errors, indicating that $p=2.5$ works particularly well for this problem. Motivated by this, we then compare SG–DSS, NM, and PBM with HiPPA using $p=2.5$. The corresponding errors are presented in Subfigure~\ref{fig:hippa:N1}(b), which highlight HiPPA’s superior accuracy.

Subfigures~\ref{fig:hippa:N1}(c)– \ref{fig:hippa:N1}(f) further illustrate the trajectories of the algorithms from $x^0$ to $x^*$.
HiPPA follows a notably direct path, efficiently navigating the nonsmooth landscape and reaching $x^*$ within only a few iterations. By contrast, NM and PBM take slightly more circuitous routes but still converge effectively, as shown in Subfigures~\ref{fig:hippa:N1}(d)–\ref{fig:hippa:N1}(e). In fact, Subfigure~\ref{fig:hippa:N1}(b) indicates that NM and PBM achieve essentially the same level of accuracy. Subfigure~\ref{fig:hippa:N1}(f) depicts the trajectory of SG–DSS with a tuned diminishing step size $\alpha=0.98$. Its progress toward $x^*$ is slower and exhibits zigzagging behavior due to the nonsmoothness of $\gf_1$, resulting in less efficient convergence.

Finally, Table~\ref{tab:relerr_inits} reports the relative errors obtained by the four algorithms (NM, HiPPA with $p=2$, SG–DSS, and PBM) under five randomly chosen initializations. HiPPA consistently achieves the smallest relative error across all starting points, often by several orders of magnitude compared to the other methods. Both NM and PBM also perform well, reaching high-accuracy solutions, whereas SG–DSS yields significantly larger errors in all cases. These results demonstrate the robustness of HiPPA with respect to the choice of initialization.
\end{example}
%%%%%%%%%%%%%%%%%%%%%%%%
\begin{table}[h]
\centering
\caption{Relative errors under five initializations for Example~\ref{ex:nes1}.}
\label{tab:relerr_inits}
\begin{tabular}{ccccc}
\toprule
\multicolumn{1}{c}{Init $x^{(0)}$} & \multicolumn{1}{c}{NM}
                                   & \multicolumn{1}{c}{HiPPA ($p=2$)}
                                   & \multicolumn{1}{c}{SG–DSS}
                                   & \multicolumn{1}{c}{PBM} \\
\midrule
$(3.57,\, 2.76)$   & \num{4.83e-07} & \num{4.60e-08} & \num{2.73e-01} &  \num{4.69e-07} \\
$(-1.34,\, 3.03)$    &\num{ 2.73e-07} & \num{6.50e-10} &  \num{1.89e-01} & \num{3.88e-06} \\
$(0.72,\, -0.06)$    & \num{7.72e-07} & \num{4.15e-10} & \num{2.18e-01} & \num{6.13e-06} \\
$(-1.21,\, -1.11)$    & \num{4.23e-07} & \num{1.12e-08} & \num{2.12e-01} &  \num{2.10e-06} \\
$(-1.09,\, 0.03)$    & \num{4.21e-07} & \num{5.41e-11} & \num{1.70e-01} & \num{2.60e-06} \\
\bottomrule
\end{tabular}
\end{table}

%%%%%%%%%%%%%%%%%%%%%%%%%%%%%%%
\begin{figure}
    \centering
    % First row
      \begin{subfigure}{0.42\textwidth}
        \centering
        \includegraphics[width=1.15\textwidth]{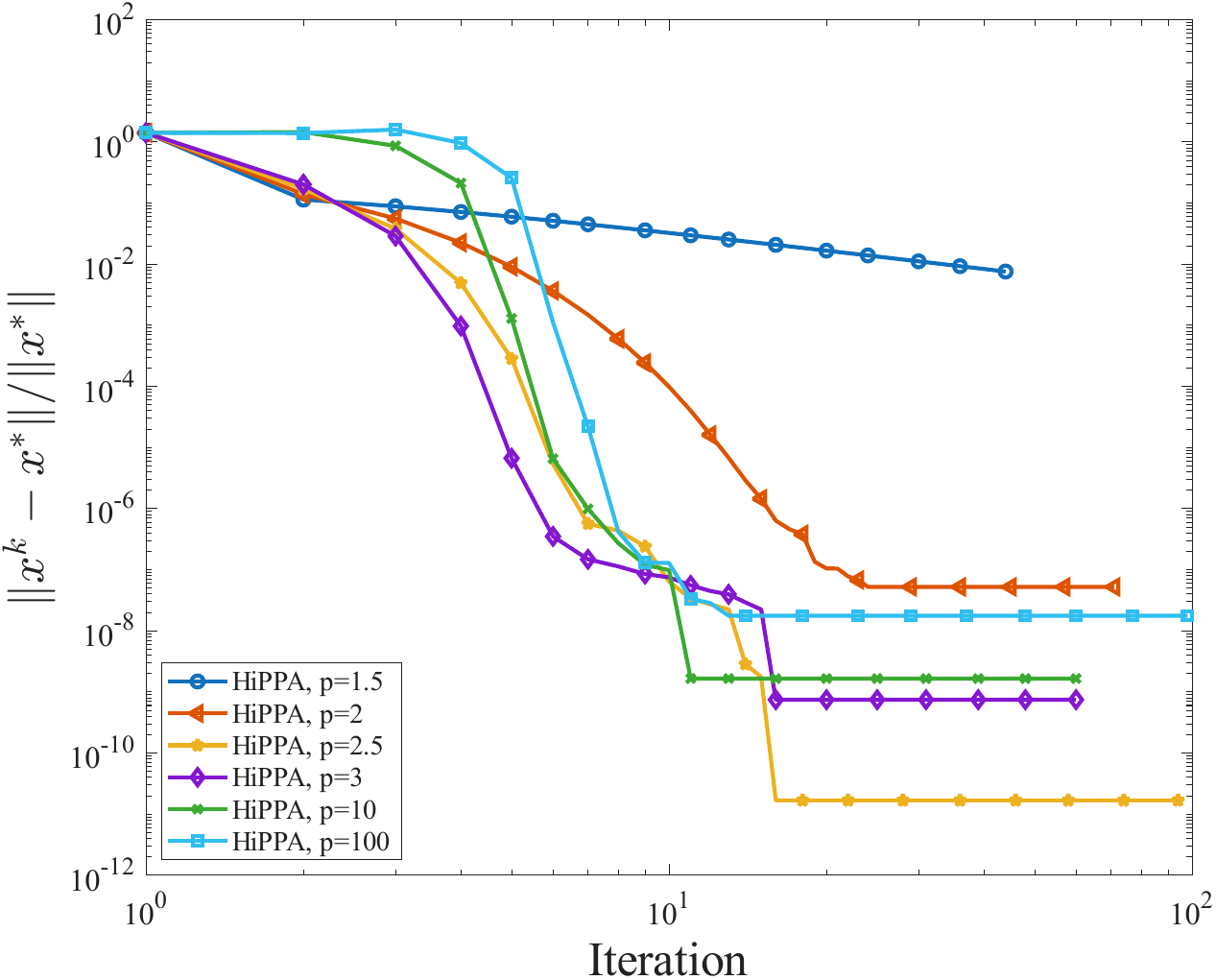}
        \caption{Comparison of relative errors of HiPPA}
    \end{subfigure}
         \qquad\qquad~~
    \begin{subfigure}{0.42\textwidth}
        \centering
        \includegraphics[width=1.15\textwidth]{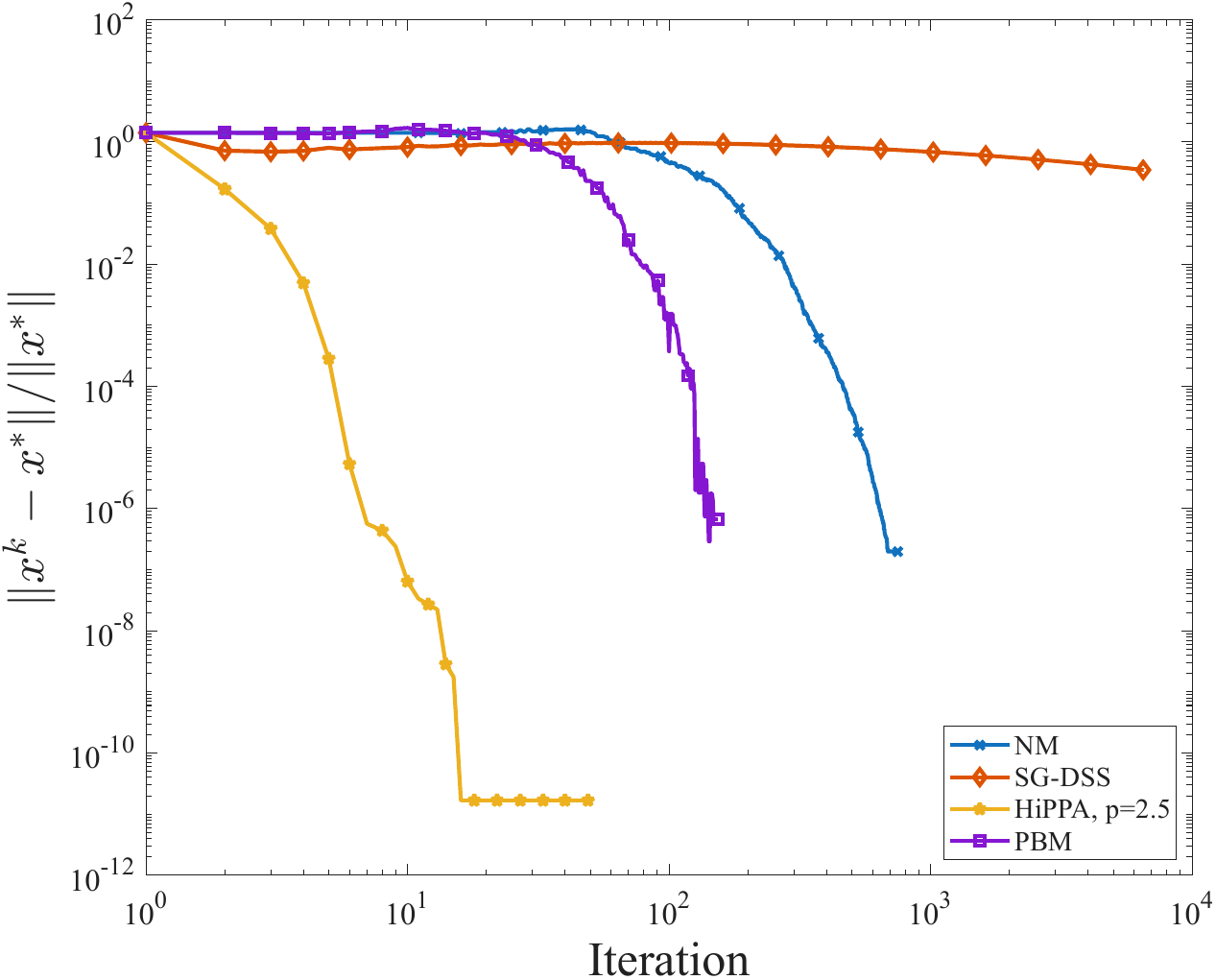}
        \caption{Comparison of relative errors for algorithms}
    \end{subfigure}
    \vspace{1em}
    
    % Second row
    \begin{subfigure}{0.42\textwidth}
        \centering
        \includegraphics[width=1.15\textwidth]{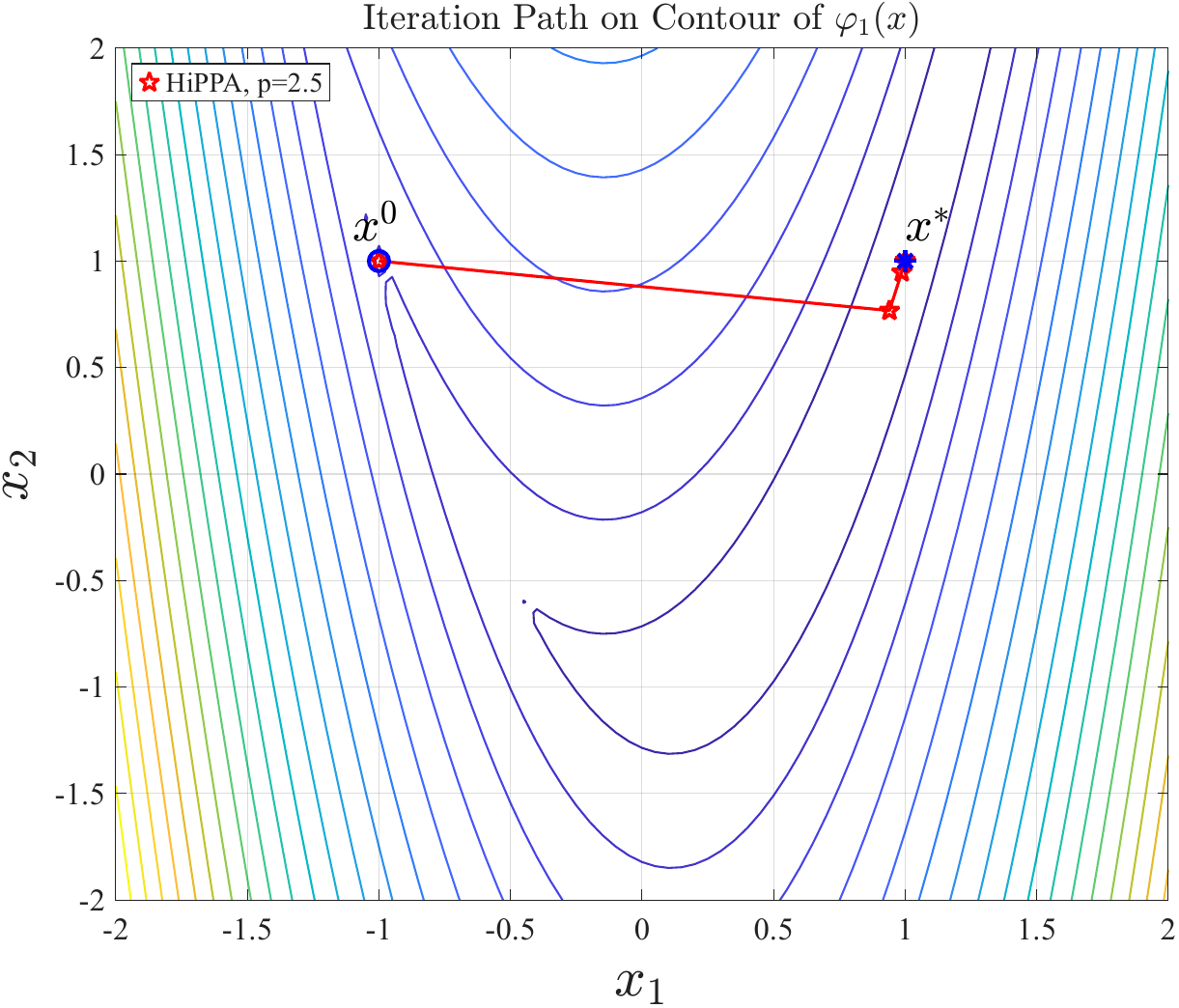}
        \caption{Trajectory of HiPPA, $p=2.5$}
    \end{subfigure}
    \qquad\qquad~~
    \begin{subfigure}{0.42\textwidth}
        \centering
        \includegraphics[width=1.15\textwidth]{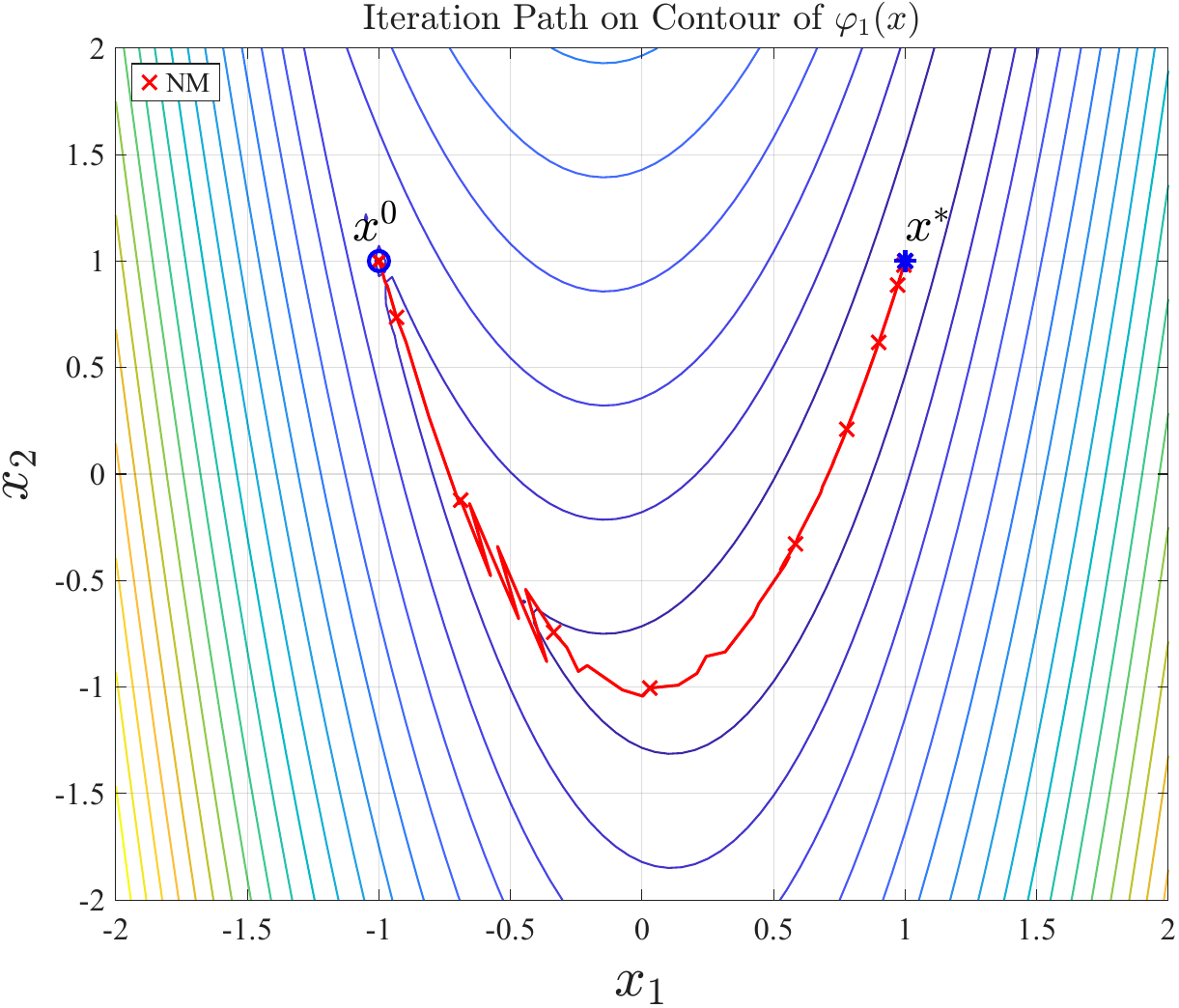}
        \caption{Trajectory of NM}
    \end{subfigure}
    \vspace{1em}
  % th row
    \begin{subfigure}{0.42\textwidth}
        \centering
        \includegraphics[width=1.15\textwidth]{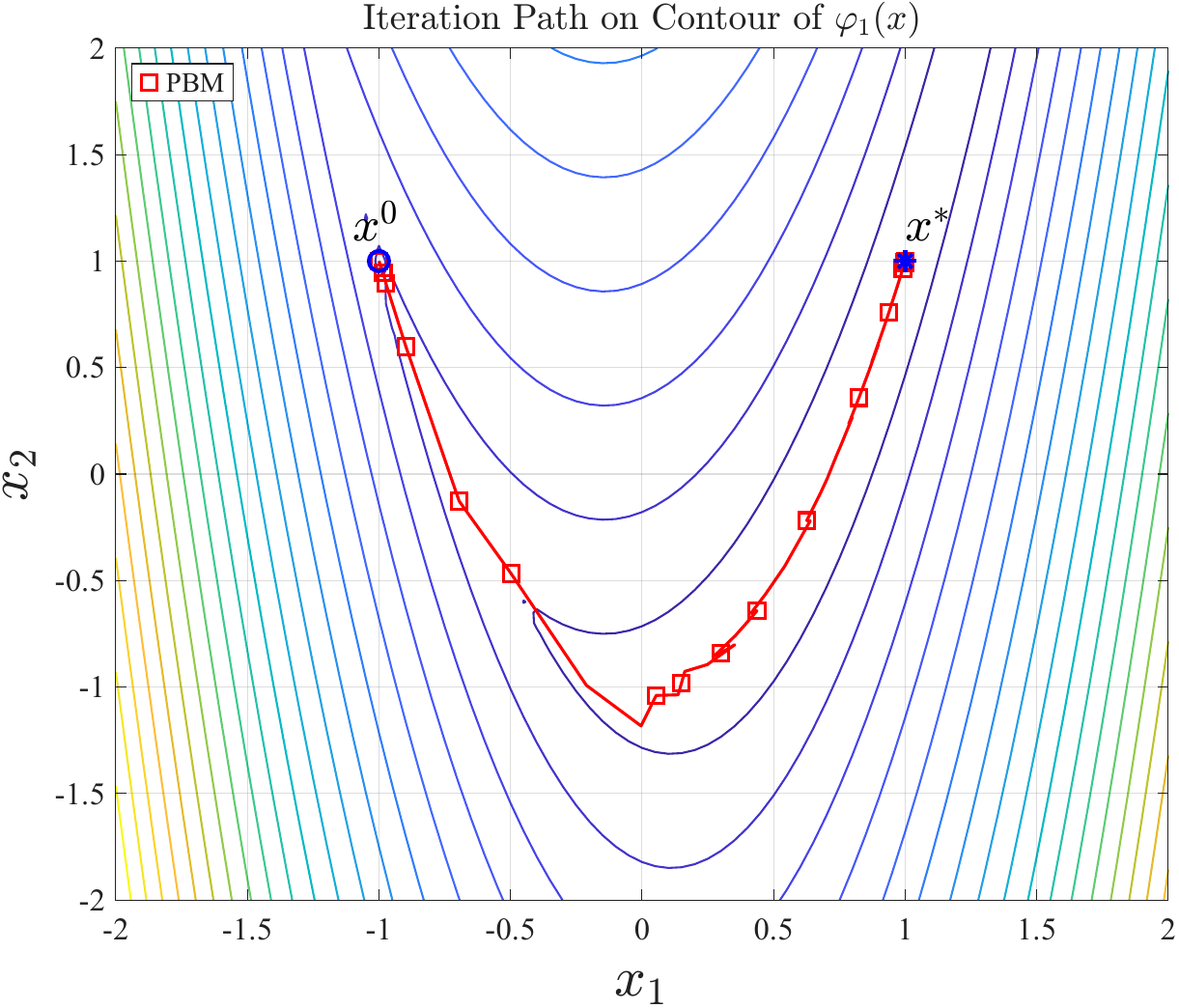}
        \caption{Trajectory of PBM}
    \end{subfigure}
         \qquad\qquad~~
    \begin{subfigure}{0.42\textwidth}
        \centering
        \includegraphics[width=1.15\textwidth]{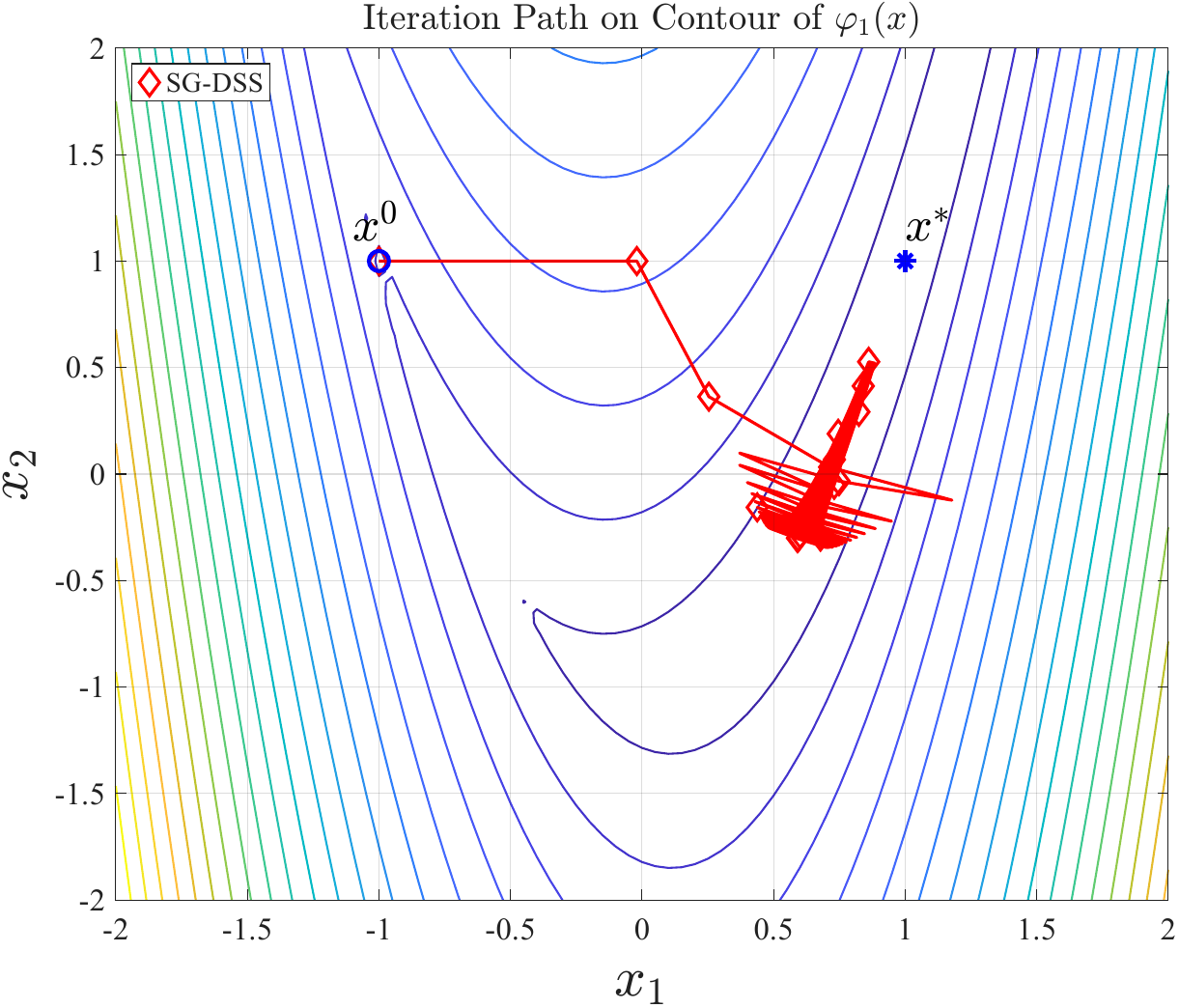}
        \caption{Trajectory of SG-DSS}
    \end{subfigure}
    \vspace{1em}
    \caption{The trajectories and relative errors in Example~\ref{ex:nes1}
            versus iterations for HiPPA ($p=2.5$), SG-DSS, NM, and PBM, starting from $x^0=(-1,1)$. }
    \label{fig:hippa:N1}
\end{figure}
%%%%%%%%%%%%%%%
\begin{example}[Nesterov-Chebyshev–Rosenbrock function II]\label{ex:nes2}
Let $\gf_2: \R^2\to \R$ be a function given by
\[
\gf_2(x)= \frac{1}{4}\left\vert x_1-1\right\vert+\left\vert x_2-2\vert x_1\vert +1\right\vert. 
\]
which attains its unique global minimizer at $x^*=(1,1)$ and $\bs{\rm Mcrit}(\gf_2)=\{x^*\}$.
This function also has another critical point $\ov{x}=(0,-1)$ in the sense of Clarke subdifferential, i.e.,
$0\in \partial^{\circ}\gf_2(\ov{x})$ which is the \textit{convex hull} of
$\partial \gf_2(\ov{x})$. As reported in \cite{GURBUZBALABAN2012Nest}, several algorithms, including BFGS and the gradient sampling algorithm \cite{burke2005robust}, may converge to $\ov{x}$. However, Fact~\ref{prop:relcrit} and Theorem~\ref{th:HiPPA} establish that every cluster point of a sequence generated by HiPPA is a Mordukhovich critical point and, in this example, the unique global minimizer.

In the first part of this example, we compare HiPPA with different values of $p\in \{1.25, 1.5, 1.75, 2, 2.5, 3, 3.5\}$ and $\gamma=9.1$, obtained by tuning, and with NM, SG-DSS (with $\alpha = 0.98$), and PBM, starting from $x^0=(-1,1)$. In the second part, we show that there exist initial points for which a variant of the subgradient method converges to the nonoptimal Clarke critical point $\ov{x}$, whereas HiPPA always converges to the optimal solution $x^*$.

As illustrated in Subfigure~\ref{fig:hippa:N2}(a),
HiPPA performs well for almost all values of $p < 3$. For comparison, we select $p=1.25$ as a representative case.
 Subfigure~\ref{fig:hippa:N2}(b) shows that in this examples, HiPPA, NM, and PBM achieve substantially smaller relative errors than SG-DSS,  with HiPPA being slightly more accurate than NM and PBM. The trajectories shown in Subfigures~\ref{fig:hippa:N2}(c)–\ref{fig:hippa:N2}(f) further confirm that HiPPA, NM, and PBM efficiently approach $x^*$, whereas SG–DSS progresses more slowly due to zigzagging behavior.

Next, we analyze convergence toward the nonoptimal Clarke critical point $\ov{x}=(0,-1)$. To this end, we recall the subgradient method with geometric step sizes (SG–GSS), defined as
\[
    x^{k+1}=x^k - \alpha_k\frac{\zeta_k}{\Vert\zeta_k\Vert},~~\zeta_k\in \partial^{\circ}\gf_2(x^k),
\]
with $\alpha_k=0.98^k$, which obtains by tuning. 

From our experiments with 10,000 randomly chosen initial points, we observed that PBM and SG–DSS (with $\alpha=0.98$) consistently converge to $x^*$. By contrast, convergence of SG–GSS to the optimal solution can be problematic. As illustrated in Figure~\ref{fig:hippa:N2rand}, when initialized at $x^{0}= (-2.48,0.58)$, SG–GSS generates a sequence that converges to the nonoptimal Clarke critical point $\ov{x}$, whereas HiPPA reliably converges to the global minimizer $x^*$.
\end{example}

 \begin{figure}
    \centering
    % First row
      \begin{subfigure}{0.42\textwidth}
        \centering
        \includegraphics[width=1.15\textwidth]{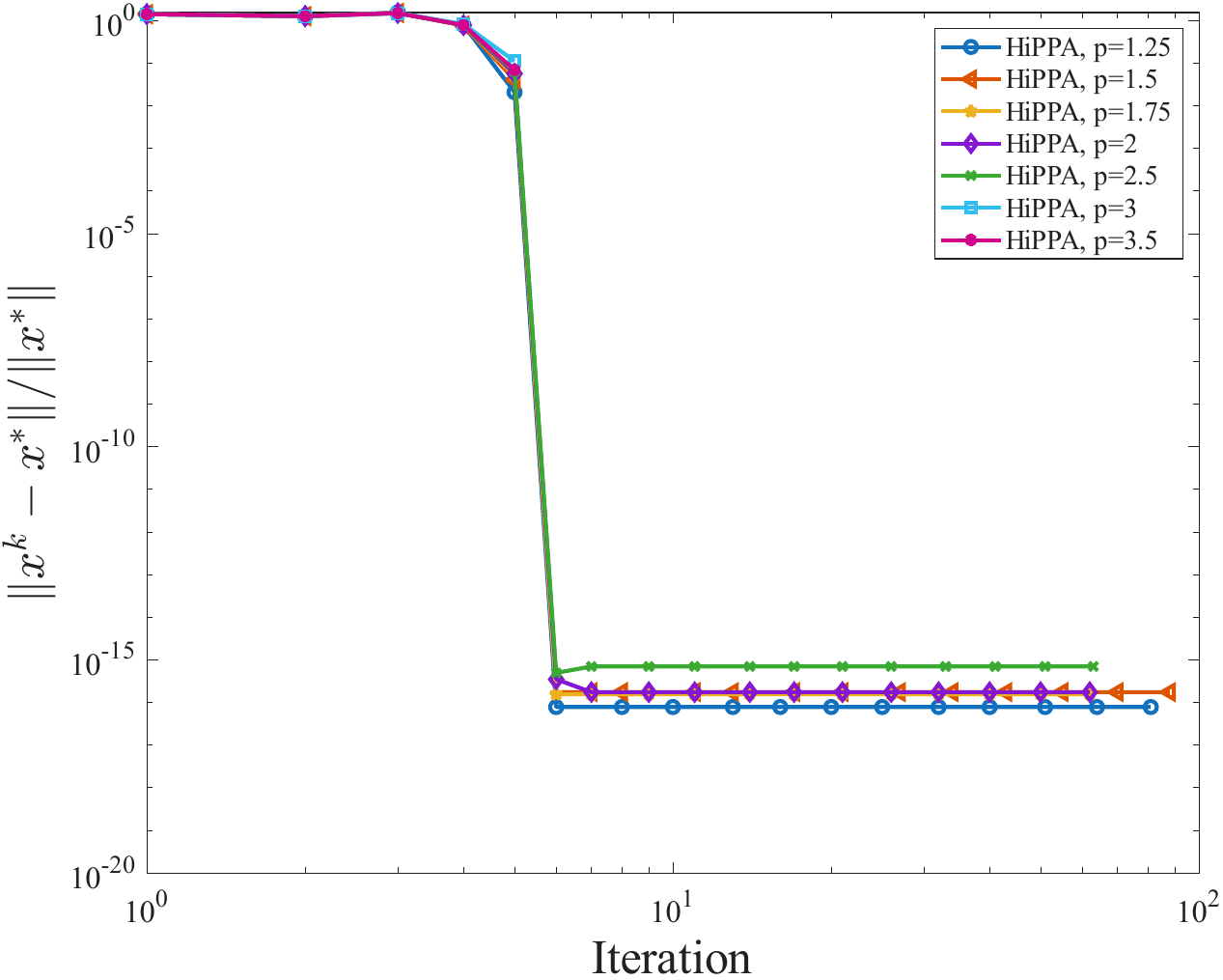}
        \caption{Comparison of relative errors of HiPPA}
    \end{subfigure}
         \qquad\qquad~~
    \begin{subfigure}{0.42\textwidth}
        \centering
        \includegraphics[width=1.15\textwidth]{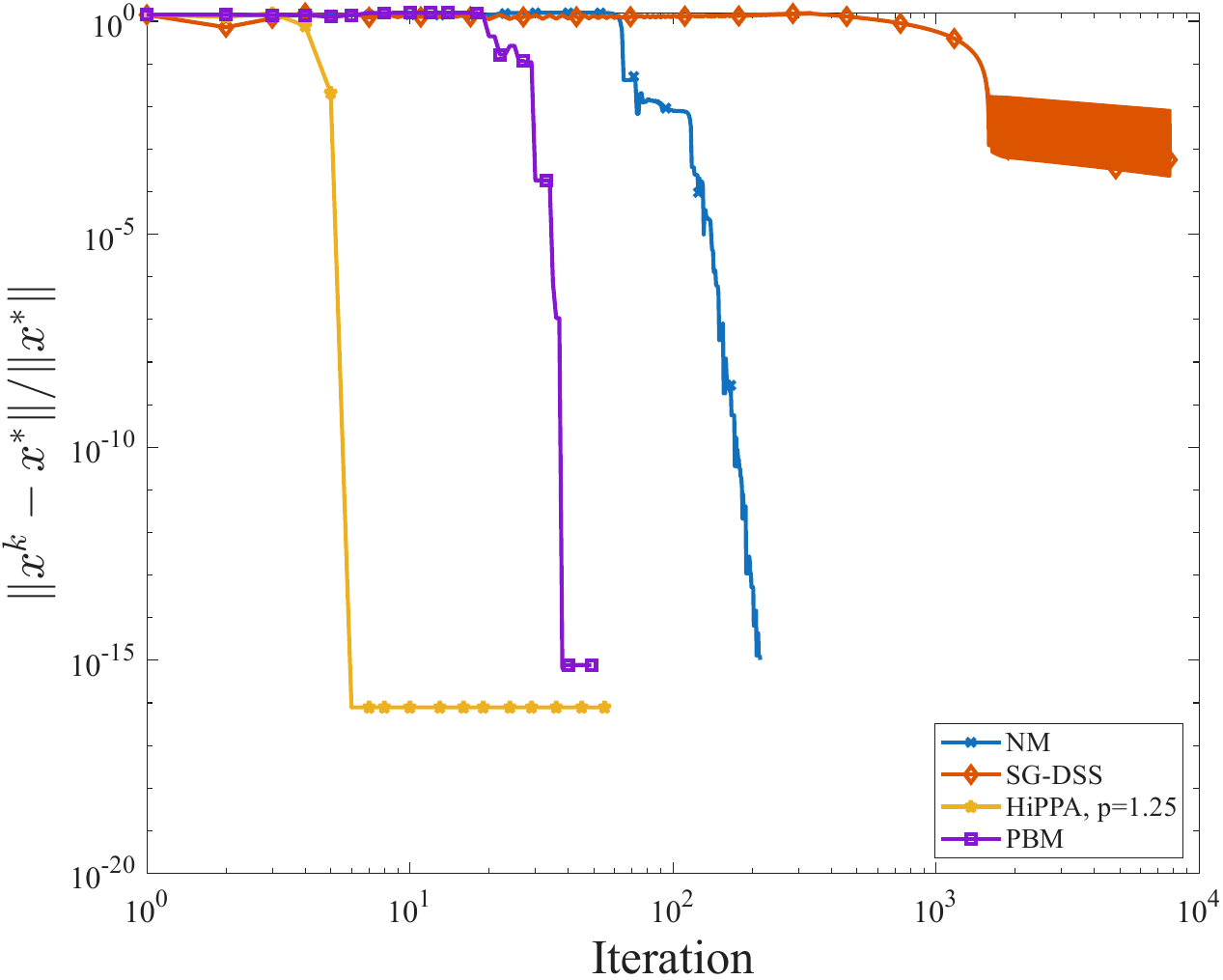}
        \caption{Comparison of relative errors for algorithms}
    \end{subfigure}
    \vspace{1em}
    
    % Second row
    \begin{subfigure}{0.42\textwidth}
        \centering
        \includegraphics[width=1.15\textwidth]{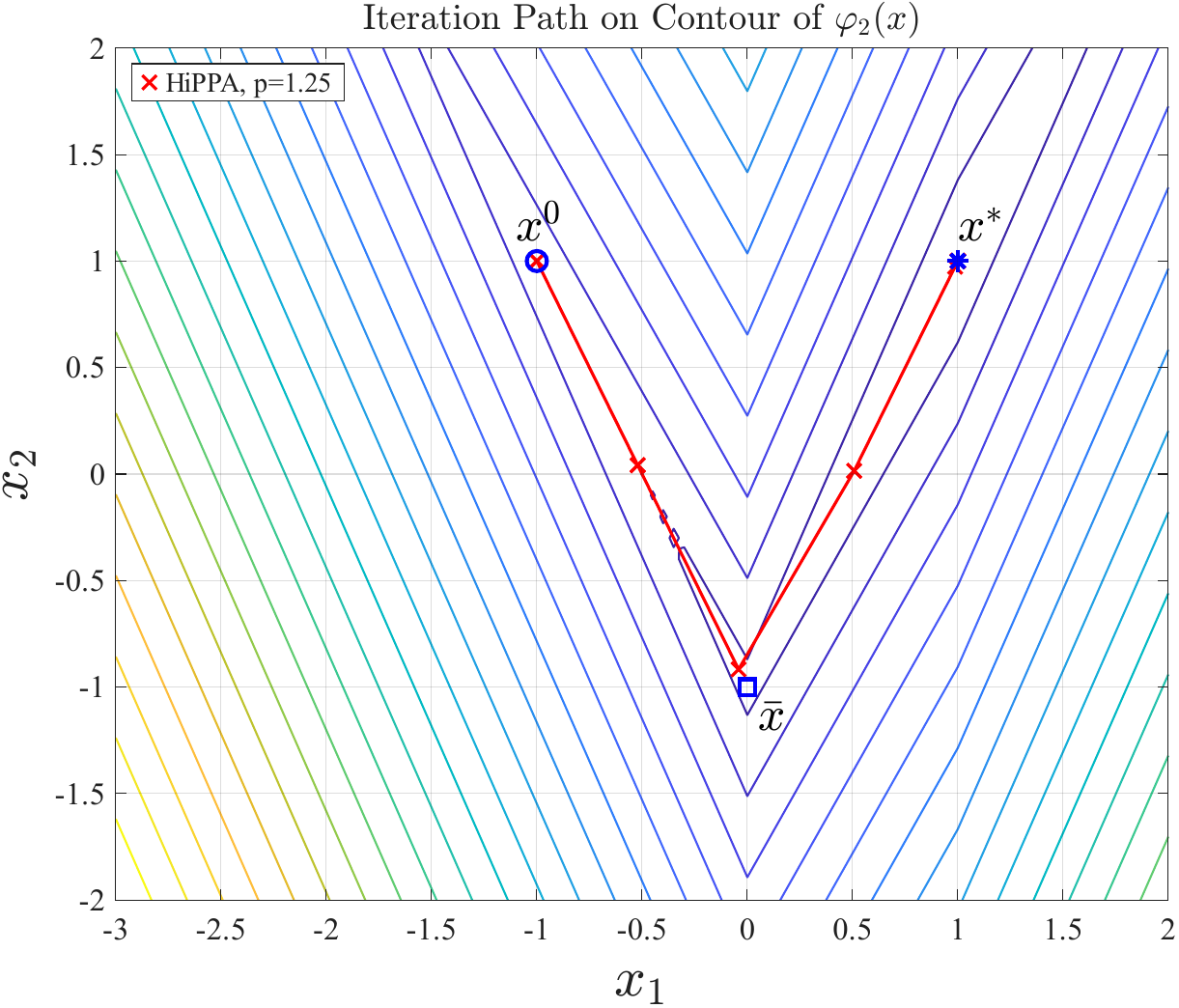}
        \caption{Trajectory of HiPPA, $p=1.25$}
    \end{subfigure}
    \qquad\qquad~~
    \begin{subfigure}{0.42\textwidth}
        \centering
        \includegraphics[width=1.15\textwidth]{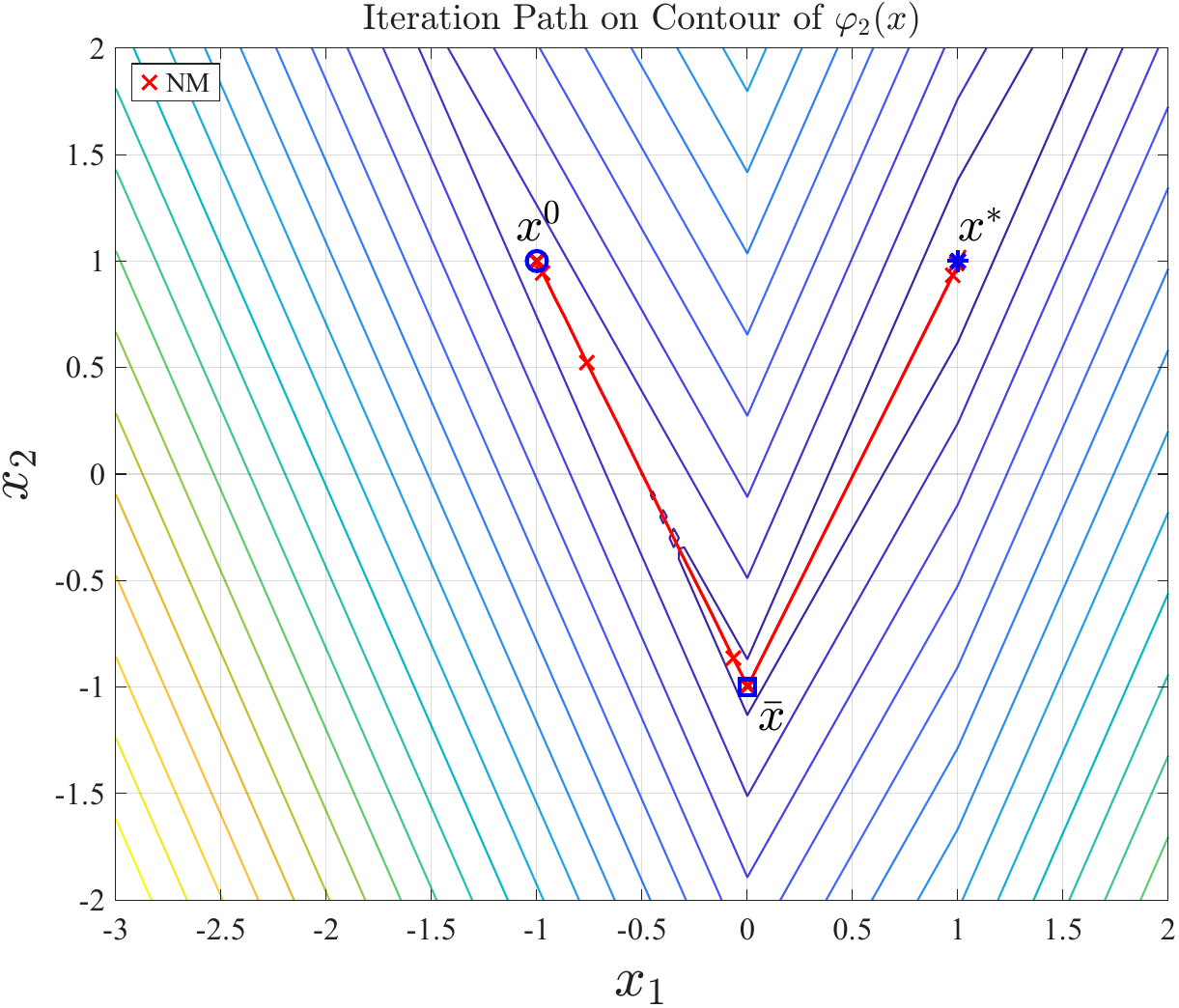}
        \caption{Trajectory of NM}
    \end{subfigure}
    \vspace{1em}
  % th row
    \begin{subfigure}{0.42\textwidth}
        \centering
        \includegraphics[width=1.15\textwidth]{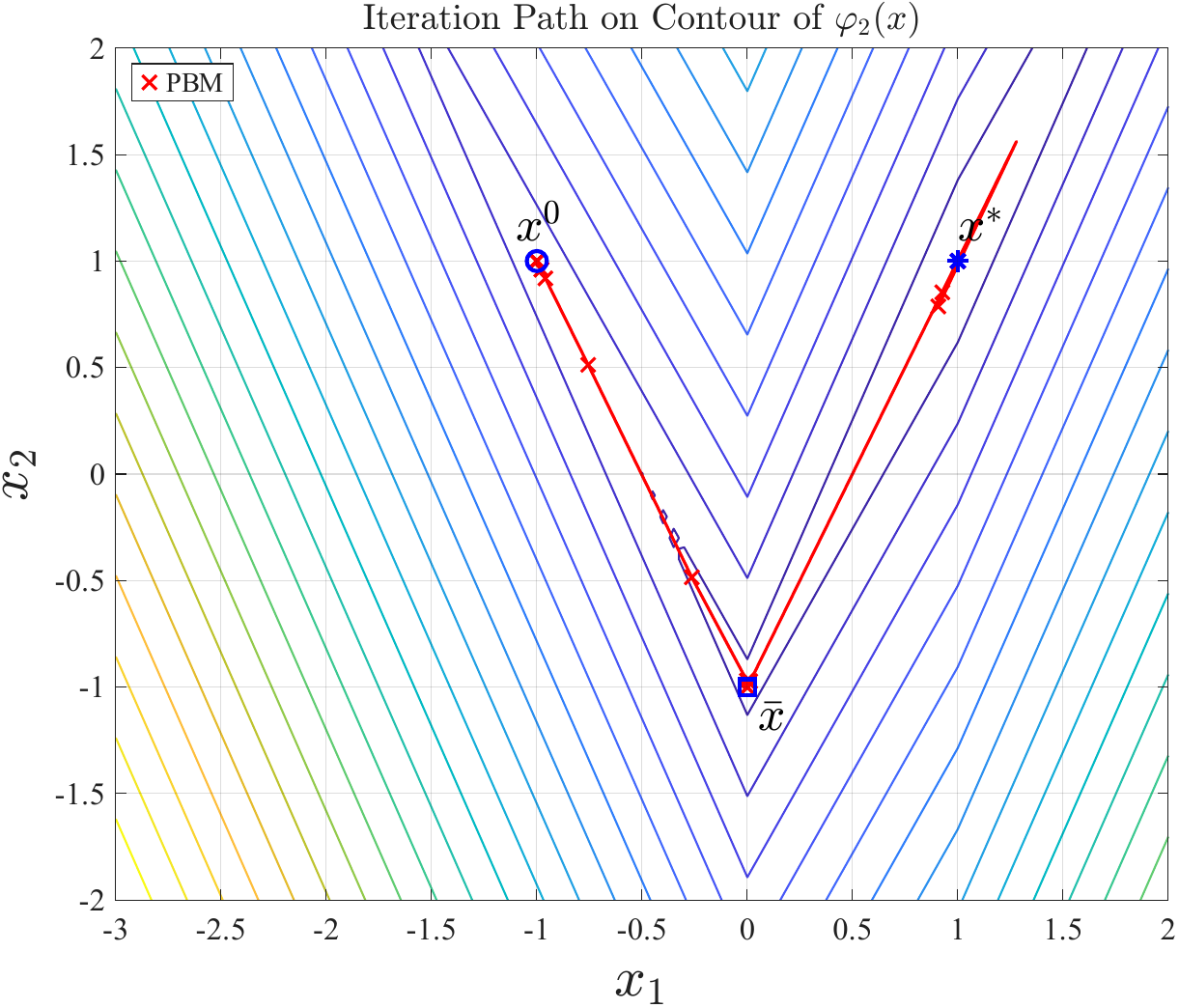}
        \caption{Trajectory of PBM}
    \end{subfigure}
         \qquad\qquad~~
    \begin{subfigure}{0.42\textwidth}
        \centering
        \includegraphics[width=1.15\textwidth]{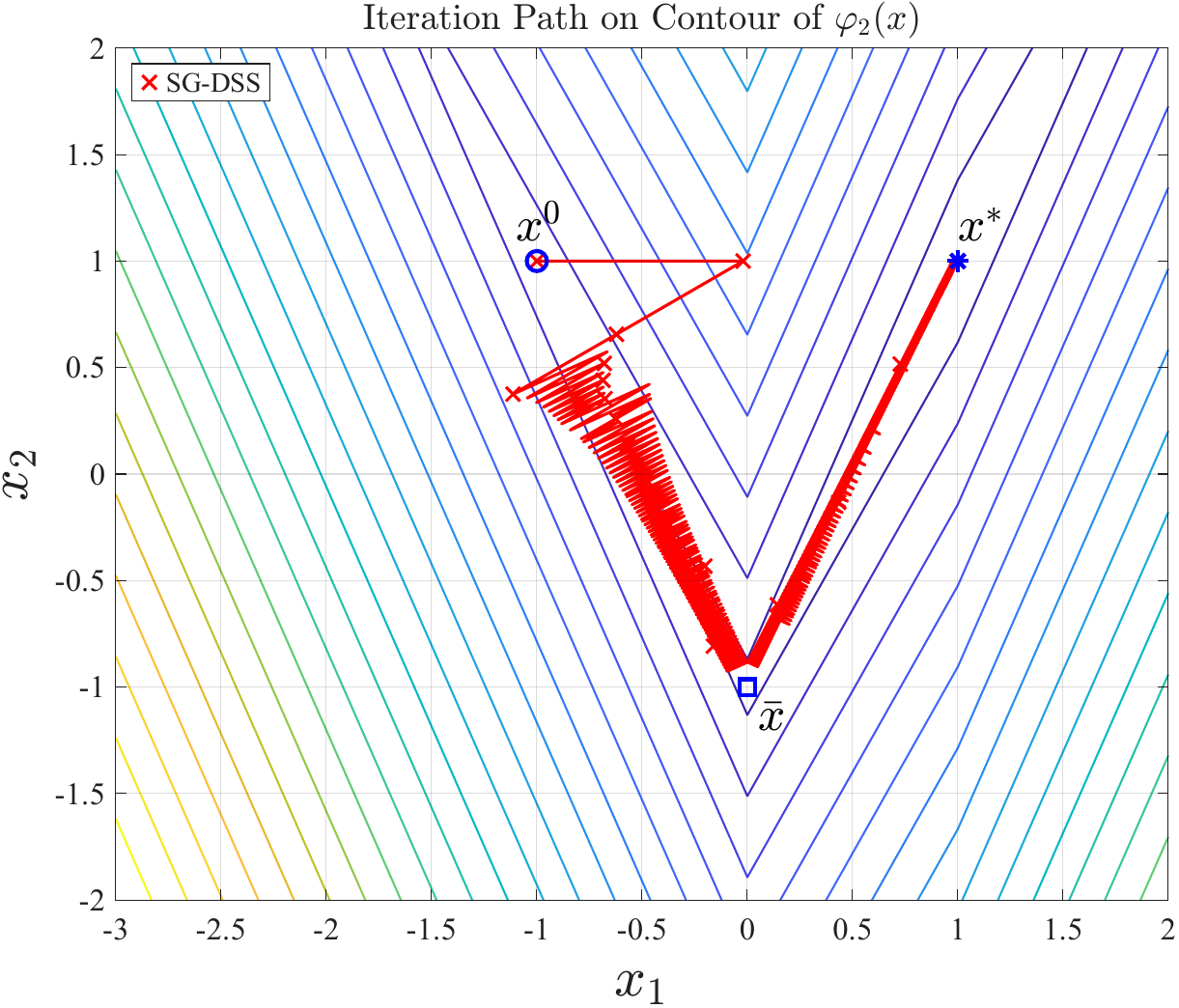}
        \caption{Trajectory of SG-DSS}
    \end{subfigure}
    \vspace{1em}
    \caption{The trajectories and relative errors in Example~\ref{ex:nes2}
            versus iterations for HiPPA ($p=1.25$), SG-DSS, NM, and PBM, starting from $x^0=(-1,1)$. }    \label{fig:hippa:N2}
\end{figure}
 %%%%%%%%%%%%%%%%%%%%%%
 
  \begin{figure}
    \centering
    % First row
      \begin{subfigure}{0.42\textwidth}
        \centering
        \includegraphics[width=1.15\textwidth]{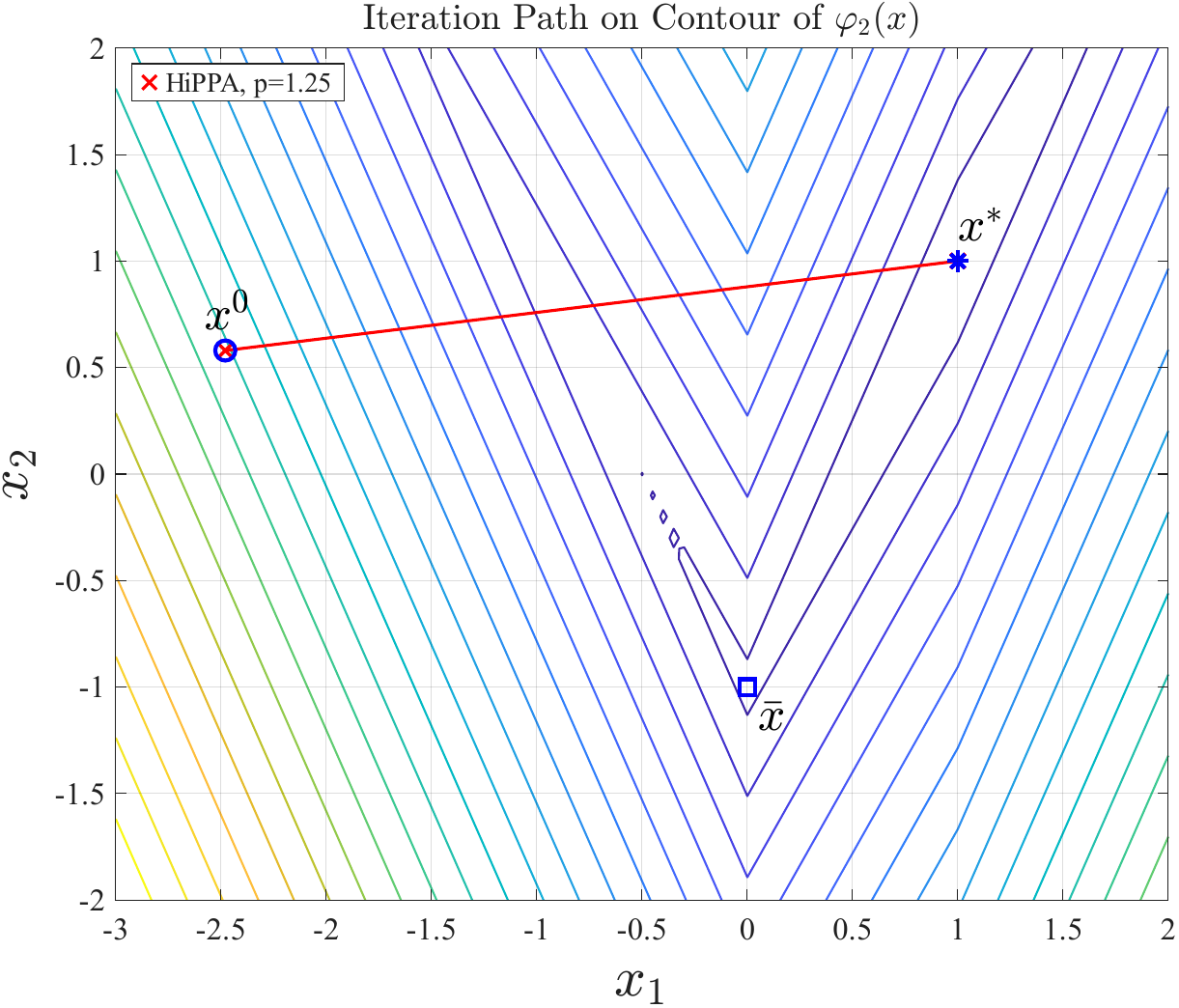}
        \caption{Trajectory of HiPPA, $p=1.25$}
    \end{subfigure}
         \qquad\qquad~~
    \begin{subfigure}{0.42\textwidth}
        \centering
        \includegraphics[width=1.15\textwidth]{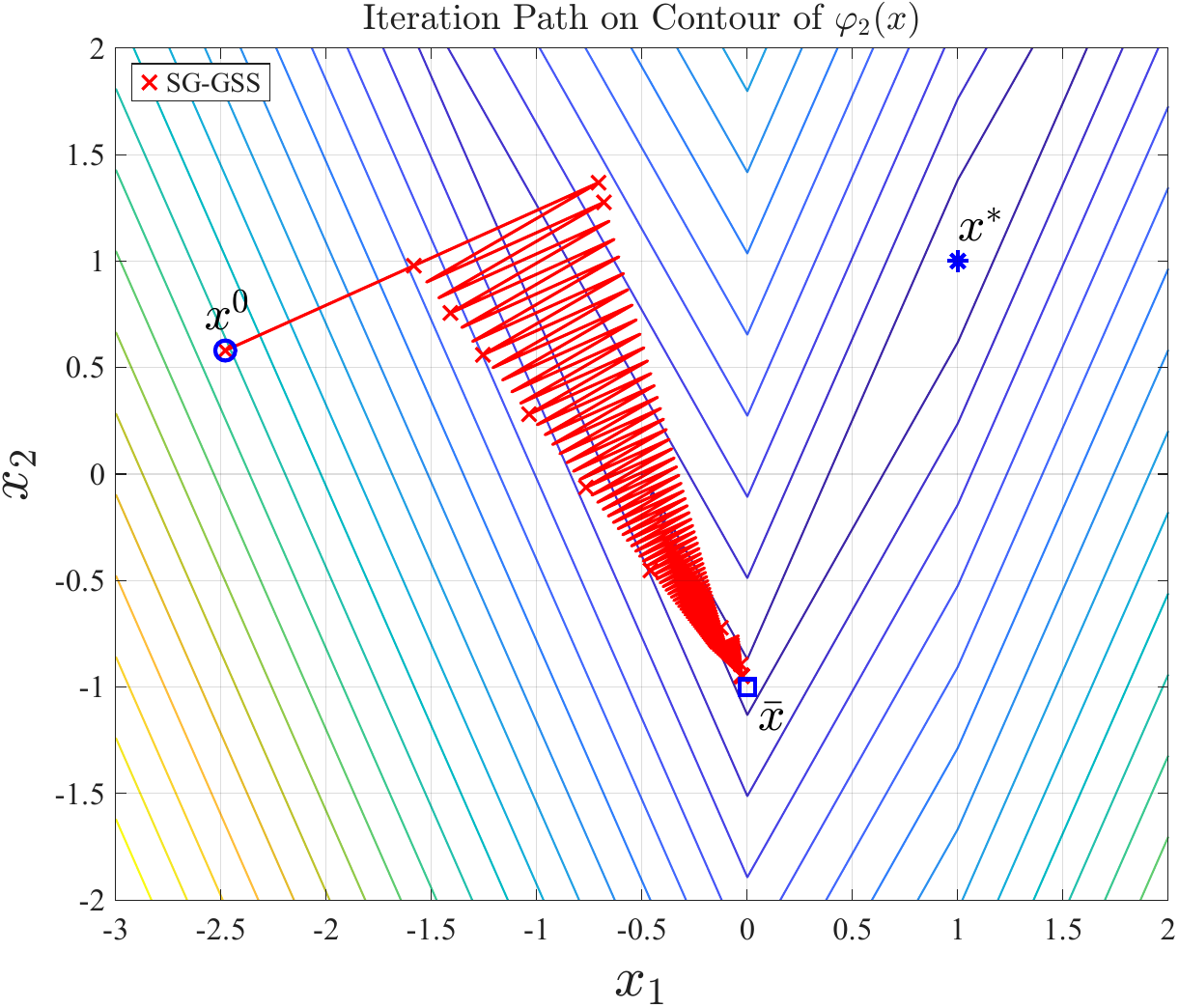}
        \caption{Trajectory of SG-GSS}
    \end{subfigure}
    \vspace{1em}

    \caption{The trajectories in Example~\ref{ex:nes2}
            versus iterations for SG-GSS and HiPPA with $p=1.25$, starting from $x^{0}= (-2.48,0.58)$. }
    \label{fig:hippa:N2rand}
\end{figure}

%%%%%%%%%%%%%%%%%%%%%%%%%%%%%%%%%%%%%%%%%%%%%%%%%%%%%%%%%%%%%%%%%%%%%%%%%%%%
%%%%%%%%%%%%%%%%%%%%%%%%%%%%%%%%%%%%%%%%%%%%%%%%%%%%%%%%%%%%%%%%%%%%%%%%%%%%
\section{Discussion}\label{sec:disc}

This study advances the understanding and application of high-order regularization in nonsmooth and nonconvex optimization through a comprehensive analysis of the high-order Moreau envelope (HOME) and its associated proximal operator (HOPE). We established that HOME exhibits continuous differentiability under $q$-prox-regularity ($q \geq 2$) and $p$-calmness for $p \in (1, 2]$ or $2 \leq p \leq q$, together with weak smoothness with H\"{o}lder continuous gradient under broader conditions. We emphasize that for $q<p$, the differential properties of HOME are still unresolved, as summarized in Subfigure~(a) of Figure~\ref{fig:qpdiff}, which can be a matter of a future work. Overall, the properties detailed in Section~\ref{sec:on diff} generalize the classical results beyond the quadratic case ($p = 2$) and establishes a more flexible theoretical foundation for applying smoothing techniques in nonsmooth and nonconvex optimization; see, e.g., \cite{Kabgani24itsopt,Kabganitechadaptive}.

Building on these insights, we proposed the high-order proximal-point algorithm (HiPPA), demonstrating its subsequential convergence to proximal fixed points, which are also $p$-calm and Mordukhovich critical points, without restrictive assumptions on $\gamma$ (Section~\ref{sec:applications}). Our preliminarily numerical experiments with the Nesterov-Chebyshev-Rosenbrock (NCR) functions further validated HiPPA's efficiency. For the first NCR function, higher $p$ (e.g., $2.5$) outperformed higher values, leveraging broader steps to navigate a milder landscape, while for the second NCR function, lower $p$ (e.g., $1.25$) excelled, precisely escaping sharper features and critical point traps. For both example, HiPPA outperformed the subgradient method SG-DSS considerably.

On the basis of these preliminary results, it is conjectured that: (i) if the critical points are located in a ``steep valley", then it is better to use a bigger regularization parameter $p$; (ii) if the critical points are located in a ``wide valley", then it is better to use a smaller regularization parameter $p$.
These results highlight HiPPA's adaptability, with the choice of $p$ tailoring its performance to the problem's geometrical structure. Future work could explore adaptive $p$ and $\gamma$ selection strategies, potentially enhancing efficiency across diverse optimization methods, and extending these our foundations to broader classes of nonconvex problems.

\section{Statements and Declarations}

\subsection{Funding}
The Research Foundation Flanders (FWO) research project G081222N and UA BOF DocPRO4 projects with ID 46929 and 48996 partially supported the paper's authors.
\subsection{Conflicts of interest/Competing interests}
There are no conflicts of interest or competing interests related to this manuscript.

\subsection{Availability of data and material}
Not applicable.
%
%%The manuscript is self-contained, no additional data and material were used during production.
%
%\subsection{Code availability}
%
%Not applicable.

\subsection{Authors' contributions}
Both authors contributed equally to the study conception and design.

%\begin{acknowledgements}

%If you'd like to thank anyone, place your comments here
%and remove the percent signs.
%\end{acknowledgements}

%%%%%%%%%%%%%%%%%%%%%%%%%%%%%%%%%%%%%%%%%%%%%%%%%%%%%%%%%%%%%%%%%%%%%%%
%%%%%%%%%%%%%%%%%%%%%%%%%%%%%%%%%%%%%%%%%%%%%%%%%%%%%%%%%%%%%%%%%%%%%%%
% \appendix
% \section{Deferred proofs}
% \label{sec:app}

\bibliographystyle{spbasic}
\bibliography{references}

\end{document}